\theoremstyle{plain}
\newtheorem{theorem}{Theorem}[section]
\newtheorem{lemma}[theorem]{Lemma}
\newtheorem{corollary}[theorem]{Corollary}
\newtheorem{proposition}[theorem]{Proposition}
\newtheorem*{claim}{Claim}
\newtheorem*{main2}{Theorem \ref{T:main2}}
\newtheorem*{surfacesubgroup}{Corollary \ref{C:surfacesubgroups}}
\def\T{\mathcal{T}}
\def\Mod{\mathrm{Mod}}
\def\WP{\mathrm{WP}}
\def\M{{\widetilde{\mathcal{M}}}}
\def\C{\mathcal{C}}
\def\qi{\raise1.6ex\hbox{\tiny$A$,$B$}\mspace{-23mu}\asymp}
\def\Min{\mathrm{Min}}
\def\syl{\mathrm{syl}}
\def\diam{\mathrm{diam}}
\def\base{\mathrm{base}}
\newcommand{\BF}{\mathbb{F}}
\newcommand{\BX}{\mathbb{X}}
\newcommand{\co}{\colon\thinspace}
\newcommand{\Fill}{\mathrm{Fill}}
\title{The geometry of right angled Artin subgroups of mapping class groups}
\author{Matt T. Clay\thanks{Partially supported by NSF grant
    DMS-1006898.}, Christopher J. Leininger\thanks{Partially supported
    by NSF grant DMS-0905748.}, and Johanna Mangahas\thanks{Partially supported by NSF RTG grant
    0602191.}}
\begin{document}

\maketitle

\begin{abstract}
  We describe sufficient conditions which guarantee that a finite set
  of mapping classes generate a right-angled Artin group
  quasi-isometrically embedded in the mapping class group.  Moreover,
  under these conditions, the orbit map to Teichm\"uller space is a
  quasi-isometric embedding for both of the standard metrics.  As a
  consequence, we produce infinitely many genus $h$ surfaces (for any
  $h$ at least 2) in the moduli space of genus $g$ surfaces (for any
  $g$ at least 3) for which the universal covers are
  quasi-isometrically embedded in the Teichm\"uller space.
\end{abstract}

\section{Introduction}

Let $S$ denote a surface and $\Mod(S)$ its mapping class group. Given
independent pseudo-Anosov mapping classes $f_1,\ldots,f_n \in
\Mod(S)$, McCarthy \cite{mccarthy} and Ivanov \cite{ivanov} proved
that by passing to sufficiently high powers, these mapping classes
generate a free subgroup.  This is the primary ingredient in the proof
that $\Mod(S)$ satisfies the ``Tits alternative''; see also
\cite{fujiwara,mangahas} for quantitative versions of this. Farb and
Mosher \cite{farbmosher} defined a notion of convex cocompactness for
subgroups of $\Mod(S)$ by way of analogy with Kleinian groups, and
proved that $f_1,\ldots,f_n$ could be raised to sufficiently high
powers to further guarantee that the subgroup they generate is convex
cocompact; see also \cite{hypbyhyp,kentleininger,hamenstadt}.

Given an arbitrary set of elements $f_1,\ldots,f_n \in \Mod(S)$, we
cannot expect that they generate a free group upon raising to
sufficiently high powers.  However, Koberda \cite{koberda} has
recently proven that the powers do generate a right-angled Artin
group; see also \cite{crispparis,crispwiest2,crispfarb} for partial
results in this direction.

In this paper, we are interested in geometric properties of
right-angled Artin subgroups of the mapping class group.  As convex
cocompact subgroups are necessarily Gromov hyperbolic, we must
consider other geometric properties for non-free right-angled Artin
subgroups of $\Mod(S)$. For example, Crisp and Wiest
\cite{crispwiest2} produced quasi-isometric embeddings of certain
right-angled Artin groups into braid groups (and hence also mapping
class groups).  In this paper we show that this is possible in much
greater generality, and furthermore, one can often conclude even
stronger geometric statements for the corresponding subgroups.  Here
we state our main theorem, and refer the reader to Section
\ref{S:notation} for necessary terminology and a more precise
statement (Theorem \ref{T:main2}).

\begin{theorem} \label{T:main}
Suppose $f_1,\ldots,f_n \in \Mod(S)$ are fully supported on overlapping nonannular subsurfaces.
Then after raising to sufficiently high powers, these elements generate a quasi-isometrically
embedded right-angled Artin subgroup of $\Mod(S)$.  Furthermore, the orbit map to the Teichm\"uller
space is a quasi-isometric embedding for both of the standard metrics, namely the Teichm\"uller and
Weil--Petersson metrics.
\end{theorem}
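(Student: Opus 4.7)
The plan is to combine Koberda's right-angled Artin theorem with the subsurface projection machinery of Masur--Minsky and its Teichm\"uller-space analogues due to Rafi (for the Teichm\"uller metric) and Brock (for the Weil--Petersson metric).

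First, I would apply Koberda's theorem to choose $N$ large enough that $f_1^N,\ldots,f_n^N$ generate a right-angled Artin subgroup $A \le \Mod(S)$, where the defining graph records which of the supporting subsurfaces $Y_i$ are disjoint. Words in $A$ admit a well-behaved syllable normal form, and the word length of $w \in A$ is coarsely the total syllable count. I would take this syllable decomposition as the combinatorial data to match against geometric data in $\Mod(S)$ and $\T(S)$.

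The core of the argument is to show that for any $w \in A$, the image $\phi(w) \in \Mod(S)$ realizes large subsurface projections on the $Y_i$ that witness each syllable of $w$. Because $f_i$ is fully supported on $Y_i$, a high power $f_i^N$ acts with large translation length on $\C(Y_i)$, so a syllable $f_i^{kN}$ produces subsurface projection distance in $\C(Y_i)$ proportional to $|k|N$. To see that these contributions do not cancel across syllables, I would use the Behrstock inequality for overlapping subsurfaces: if $Y_i$ and $Y_j$ overlap, then large projection to one forces $\partial Y_i$ or $\partial Y_j$ to have small projection to the other, so active subsurfaces from different syllables along the word cannot simultaneously destroy each other. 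Summing these lower bounds via the Masur--Minsky distance formula yields $d_{\Mod(S)}(1, \phi(w)) \succeq |w|_A$, and the matching upper bound is immediate from the triangle inequality.

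For the Teichm\"uller-space statements, I would replace the Masur--Minsky distance formula by Rafi's combinatorial formula for the Teichm\"uller metric and by Brock's pants-graph quasi-isometry (together with the Masur--Minsky-style formula for distances in the pants graph) for the Weil--Petersson metric. The lower bounds on subsurface projections then transfer to lower bounds on the orbit-map distances, while the orbit map is coarsely Lipschitz in both metrics by general principles. The main obstacle is the consistency step: verifying that the subsurface projections of $\phi(w)$ genuinely accumulate syllable by syllable with no unexpected cancellation. This is precisely where the overlapping and nonannular hypotheses are essential, via Behrstock's inequality and the fact that the curve complexes $\C(Y_i)$ of nonannular subsurfaces are unbounded Gromov hyperbolic spaces on which fully supported mapping classes act with positive translation length.
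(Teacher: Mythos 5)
Your high-level strategy---syllable normal forms, large projections coming from full support, the Behrstock inequality to prevent cancellation, and the Masur--Minsky/Brock/Rafi distance formulas---is the same as the paper's. But the core step is left unresolved, and the version of it you do state would fail. You assert that a syllable $f_i^{kN}$ of $w$ contributes projection distance proportional to $|k|N$ \emph{in $\C(Y_i)$ itself}, and that one then sums these contributions over the syllables. The subsurface that actually witnesses the $i$-th syllable of $w = x_1^{e_1}\cdots x_k^{e_k}$ is not the fixed support $Y_{J(i)}$ but its translate $\phi(x_1^{e_1}\cdots x_{i-1}^{e_{i-1}})(Y_{J(i)})$ by the image of the prefix. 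If the same generator occurs in several syllables (say $w = f_1^N f_2^N f_1^N$ with $Y_1 \pitchfork Y_2$), only the first occurrence registers in $d_{Y_1}(\mu,\phi(w)\mu)$; the later ones register in distinct translates of $Y_1$. Summing projections over the $n$ fixed subsurfaces therefore yields a bound that does not grow with the number of syllables and cannot give $d(1,\phi(w)) \geq |w|/A - B$.

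What is missing is exactly the content of the paper's Theorem \ref{T:main technical}: (i) an induction on the number of syllables, decomposing the word around each syllable as $a\,b\,g_i\,c$ with $b$ commuting with $g_i$ and bounding the error terms $d_{Y_i}(a^{-1}\mu,\mu)$ and $d_{Y_i}(\mu,c\mu)$ by $K/2$ via the Behrstock inequality, which shows that each \emph{translated} subsurface carries distance at least $K|e_i|$ between $\mu$ and $\phi(w)\mu$; and (ii) the proof that the resulting map from syllables to subsurfaces is an \emph{order-preserving injection} into the partially ordered set $\Omega(K,\mu,\phi(w)\mu)$ of Behrstock--Kleiner--Minsky--Mosher, so that the $k$ contributions are indexed by genuinely distinct subsurfaces and may legitimately be added in the distance formulas. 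You correctly flag this ``no unexpected cancellation'' step as the main obstacle, but the Behrstock inequality applied only to the fixed $Y_i$ does not deliver it. Two smaller points: invoking Koberda's theorem is unnecessary, since injectivity of $\phi$ already follows from the quasi-isometric embedding (right-angled Artin groups are torsion-free); and the nonannular hypothesis is used not because the $\C(Y_i)$ are unbounded hyperbolic spaces, but because the identity $d_Y(\mu,f\mu')=d_Y(\mu,\mu')$ for $f$ supported off $Y$ fails for annular $Y$, and because the Weil--Petersson formula sums only over nonannular subsurfaces.
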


\noindent {\bf Remarks.}

\smallskip

\noindent
1. We note that for the second statement to hold, the assumption that the
support of each $f_i$ is not an annulus is necessary.  On the other hand, it seems likely that the
homomorphism to $\Mod(S)$ is a quasi-isometry without this additional assumption.

\smallskip

\noindent
2. There are a number of other ``natural'' metrics on Teichm\"uller space besides the two we have mentioned; the Bergman metric, Carath\'eodory metric, McMullen metric, K\"ahler-Einstein metric, Ricci metric and perturbed Ricci metric.  However, each of these is quasi-isometric to the Teichm\"uller metric (see \cite{mcmullenkahler,yeung,liusunyau1,liusunyau2}), and so the conclusion of Theorem \ref{T:main} also holds for any of these metrics.

\bigskip

In section \ref{S:elements} we use the ideas from the proof of this theorem to describe the
Thurston type of any element in the right-angled Artin subgroup of $\Mod(S)$ we construct, and we
see that it is pseudo-Anosov on the largest possible subsurface.  In particular, we describe
exactly which elements are pseudo-Anosov on $S$; see Theorem \ref{T:find pAs}.

The hypotheses in Theorem \ref{T:main} are general enough to easily provide quasi-isometric
embeddings of any right-angled Artin group into some mapping class group (see the end of Section
\ref{S:realizing a graph}).  In particular we have the following.

\begin{corollary} \label{C:existence}
Any right-angled Artin group admits a homomorphism to some mapping class group which is a
quasi-isometric embedding, and for which the orbit map to Teichm\"uller space is a quasi-isometric
embedding with respect to either of the standard metrics. \qed
\end{corollary}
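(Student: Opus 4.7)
The plan is to reduce the corollary to a surface-topological construction and then invoke Theorem \ref{T:main}. Fix a finite simplicial graph $\Gamma$ with vertex set $\{v_1,\ldots,v_n\}$, so that $A(\Gamma)$ is generated by $v_1,\ldots,v_n$ with commutation relations precisely along the edges of $\Gamma$. I will build a surface $S$ and nonannular connected subsurfaces $Y_1,\ldots,Y_n\subset S$ whose isotopy classes are disjoint exactly along the edges of $\Gamma$ and overlap essentially along the non-edges, then take $f_i\in\Mod(S)$ pseudo-Anosov on $Y_i$ and the identity elsewhere, and apply Theorem \ref{T:main} to the powers $f_i^N$. Combining the conclusion of Theorem \ref{T:main} with the commutation structure --- disjoint supports commute, and overlapping supports give no further relations among sufficiently high powers by Koberda \cite{koberda} --- the homomorphism $A(\Gamma)\to\Mod(S)$ sending $v_i\mapsto f_i^N$ is the required quasi-isometric embedding with quasi-isometrically embedded Teichm\"uller orbit.

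To produce $S$ and the $Y_i$, I would first realize the complement graph $\bar\Gamma$ as an intersection graph of essential simple closed curves $c_1,\ldots,c_n$ on some surface $S_0$; this is standard and can always be arranged on a surface of sufficiently high genus. Thus $c_i$ and $c_j$ intersect essentially iff $\{v_i,v_j\}$ is a non-edge of $\Gamma$. Next, fatten each $c_i$ to a nonannular subsurface $Y_i$ by taking a regular neighborhood of $c_i$ and attaching to it a genus-one piece chosen disjoint from all other $c_k$; enlarge $S_0$ with extra handles if necessary to accommodate these attachments, and call the resulting surface $S$. Then when $c_i\cap c_j=\emptyset$ the pieces $Y_i,Y_j$ can be taken isotopically disjoint, and when $c_i,c_j$ meet essentially the boundary of $Y_i$ necessarily crosses $c_j\subset Y_j$, so $Y_i$ and $Y_j$ overlap. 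Each $Y_i$ is nonannular by construction and supports pseudo-Anosov elements, which furnish the $f_i$.

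Theorem \ref{T:main}, applied in its precise form as Theorem \ref{T:main2}, then produces an $N$ for which $\langle f_1^N,\ldots,f_n^N\rangle$ is quasi-isometrically embedded in $\Mod(S)$ with orbit map to $\T(S)$ a quasi-isometric embedding in both the Teichm\"uller and Weil--Petersson metrics. The main obstacle in this plan is the topological realization: one must verify that the $Y_i$ overlap precisely along the non-edges of $\Gamma$, without accidental nesting or annular overlap, and that each $Y_i$ can be made nonannular while preserving the required disjointness with its neighbors. Once this topological setup is in place, the corollary follows as a direct application of Theorem \ref{T:main}.
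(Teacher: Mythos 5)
Your proposal is correct and follows essentially the same route as the paper: Section \ref{S:realizing a graph} builds a nice realization of $\Gamma$ by gluing annuli along product squares according to the non-edges of $\Gamma$ and inserting a one-holed torus into each (your curve-fattening construction is the same device for forcing the supports to be nonannular and non-nested), after which Corollary \ref{C:existence} is immediate from Theorem \ref{T:main2}. One small remark: the appeal to Koberda is superfluous, since Theorem \ref{T:main2} already asserts that $\phi_{\mathbb F}$ itself is a quasi-isometric embedding of $G(\Gamma)$, which subsumes the absence of further relations.
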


The fundamental group of a closed orientable surface (of genus $h \geq 2$) is called a {\em (genus
$h$) surface subgroup}.  Many right-angled Artin groups contain quasi-isometrically embedded
surface subgroups; see \cite{servatiusdroms,crispwiest1} (though the question of exactly which
right-angled Artin groups contain surface subgroups is still open; see for example
\cite{gordonlongreid,sanghyunkim1,sanghyunkim2,crispsageevsapir,rover}).  There are also constructions of surface subgroups
of the mapping class group \cite{atiyah,kodaira,gonzalezharvey}.  In \cite{leiningerreid},
infinitely many nonconjugate surface subgroups were constructed with geometric properties akin to
geometric finiteness in the setting of Kleinian groups.  From an explicit version of Corollary
\ref{C:existence}, and the aforementioned examples of surface subgroups of right-angled Artin
groups, we obtain the following.  See Section \ref{S:proofs} for the proof.

\begin{corollary} \label{C:surfacesubgroups}
For any closed surface $S$ of genus at least $3$ and any $h \geq 2$, there exist infinitely many
nonconjugate genus $h$ surface subgroups of $\Mod(S)$, each of which act cocompactly on some
quasi-isometrically embedded hyperbolic plane in the Teichm\"uller space $\T(S)$, with either of
the standard metrics.
\end{corollary}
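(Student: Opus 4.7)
The plan is to transport quasi-isometrically embedded surface subgroups from right-angled Artin groups into $\Mod(S)$ using Theorem \ref{T:main}, and then to obtain infinitely many nonconjugate such subgroups by varying the exponents. Fix $h\geq 2$. By \cite{servatiusdroms, crispwiest1}, there exist a finite simplicial graph $\Gamma$ and a quasi-isometrically embedded subgroup $\Sigma\leq A_\Gamma$ isomorphic to the fundamental group of a closed orientable surface of genus $h$. For a given closed surface $S$ of genus $g\geq 3$, the constructions of Section \ref{S:realizing a graph} provide overlapping nonannular subsurfaces $Y_1,\dots,Y_n\subset S$ whose overlap pattern realizes $\Gamma$, together with mapping classes $f_i\in\Mod(S)$ fully supported on $Y_i$. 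Theorem \ref{T:main} then produces, for each sufficiently large integer $N$, a quasi-isometric embedding $\rho_N\colon A_\Gamma\to\Mod(S)$ sending the $i$th generator to $f_i^N$, whose orbit map to $\T(S)$ is a quasi-isometric embedding in either standard metric.

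Restricting $\rho_N$ to $\Sigma$ yields a quasi-isometrically embedded genus $h$ surface subgroup $\rho_N(\Sigma)\leq\Mod(S)$ whose orbit in $\T(S)$ is also quasi-isometrically embedded. To exhibit the required quasi-isometrically embedded hyperbolic plane, fix any hyperbolic structure on the underlying topological surface so that $\Sigma$ acts cocompactly and properly discontinuously on $\mathbb{H}^2$; the orbit map $\Sigma\to\mathbb{H}^2$ is then a $\Sigma$-equivariant quasi-isometry. Composing a coarse inverse $\mathbb{H}^2\to\Sigma$ with the Teichm\"uller orbit map gives a coarsely $\rho_N(\Sigma)$-equivariant quasi-isometric embedding $\mathbb{H}^2\to\T(S)$ whose image is a quasi-isometrically embedded hyperbolic plane on which $\rho_N(\Sigma)$ acts cocompactly.

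To produce infinitely many pairwise nonconjugate surface subgroups of this form, let $N$ range over an infinite sequence of sufficiently large integers, and claim that for well-separated $N\ne M$ the subgroups $\rho_N(\Sigma)$ and $\rho_M(\Sigma)$ are nonconjugate in $\Mod(S)$. A convenient invariant is the translation-length spectrum on a Cayley graph of $\Mod(S)$, which is preserved under conjugation. Each generator $f_i^N$ has word length comparable to $N$ in any fixed finite generating set, and combining the undistortion of cyclic subgroups in $\Mod(S)$ (Farb--Lubotzky--Minsky) with a positive lower bound on stable translation lengths in the hyperbolic group $\Sigma$ (Gromov rationality), I would argue that the minimum translation length of a nontrivial element of $\rho_N(\Sigma)$ scales linearly in $N$, so distinct $N$ yield distinct minima and hence nonconjugate subgroups.

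The main obstacle is making this last step rigorous. The quasi-isometric embedding provided by Theorem \ref{T:main} has constants that depend on $N$ and only pin down translation lengths loosely; one needs to show that the minimum translation length in $\rho_N(\Sigma)$ really does go to infinity with $N$, which amounts either to sharpening the lower quasi-isometric constant of $\rho_N$, or to appealing to a coarser topological invariant such as the subsurface supports of pseudo-Anosov elements (accessible via Theorem \ref{T:find pAs}), which a conjugation in $\Mod(S)$ must preserve up to simultaneous homeomorphism of $S$. The remainder of the argument is a direct application of Theorem \ref{T:main} together with the cited right-angled Artin surface-subgroup constructions.
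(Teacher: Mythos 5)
Your overall architecture matches the paper's: pull a quasi-isometrically embedded genus $h$ surface subgroup out of a right-angled Artin group via \cite{servatiusdroms,crispwiest1}, push it into $\Mod(S)$ with the main theorem, get the hyperbolic plane from the quasi-isometry $\Sigma \to \mathbb{H}^2$ composed with the orbit map, and distinguish conjugacy classes by letting the powers grow. Two points, one minor and one that is the real gap. The minor one: you invoke ``the constructions of Section \ref{S:realizing a graph}'' to realize an \emph{arbitrary} $\Gamma$ in a \emph{given} closed surface of genus $g \geq 3$, but that construction only realizes a general $\Gamma$ in \emph{some} surface. The paper fixes $\Gamma$ to be the pentagon graph, which both contains quasi-isometrically embedded surface subgroups of every genus $h \geq 2$ (Crisp--Wiest) and is explicitly realized in every closed $S$ of genus $\geq 3$ via the branched-cover picture of Figure \ref{F:genus08cyclic5}; you should make that choice explicit.

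The genuine gap is the one you flag yourself: you never establish that the minimal translation length of nontrivial elements of $\rho_N(\Sigma)$ actually grows with $N$, and the route you sketch (undistortion of cyclic subgroups plus Gromov rationality in $\Sigma$) does not obviously give it, since a lower bound on translation length in $\Sigma$ only transfers to $\Mod(S)$ after dividing by the multiplicative quasi-isometry constant, which you fear depends on $N$. The resolution --- and the paper's actual argument --- is that the relevant constants do \emph{not} degrade with $N$: the constant $K$ of Theorem \ref{T:main technical} depends only on $\Gamma$, $\mathbb{X}$ and $\mu$, and replacing $f_i$ by $f_i^n$ multiplies $\tau_{X_i}$ by $n$, so Statement 1 gives $d_{X^\sigma(x_i^{e_i})}(\mu,\phi_{\mathbb{F}^n}(\sigma)\mu) \geq nK|e_i|$; feeding this into the distance formulas (Theorems \ref{T:WP distance}, \ref{T:Teich distance}), whose constants are also independent of $n$, yields $d_\T(m,\phi_{\mathbb{F}^n}(\sigma)m) \geq (nK\, d_G(1,\sigma) - B)/A$. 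Hence the lower quasi-isometry constant improves linearly in $n$, and since translation lengths of nontrivial elements of $G(\Gamma)$ are bounded below, the minimal translation length on $\T(S)$ over $\phi_{\mathbb{F}^n}(G(\Gamma))\setminus\{1\}$ (in particular over the surface subgroup) tends to infinity with $n$. As translation length on $\T(S)$ is a conjugacy invariant, this separates the conjugacy classes. So your plan is correct in outline, but the step you label an ``obstacle'' is precisely the content of the proof and needs the $n$-independence of $K$, $A$, $B$ to be stated and used.
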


This corollary is in contrast to the work of Bowditch \cite{bowditch-atoroidal} who proves
finiteness, for any fixed $h \geq 2$, for the number of conjugacy classes of genus $h$ surface
subgroups of $\Mod(S)$ which are purely pseudo-Anosov (we note that surface subgroups of the
mapping class group which arise as subgroups of right-angled Artin groups can never be purely
pseudo-Anosov; see Proposition \ref{P:not purely pa} below).  While these surface subgroups are not
purely pseudo-Anosov, by the corollary, they do have the closely related property that every
nontrivial element has positive translation length on $\T(S)$.

Finally, we remark that while Bowditch's result mentioned above is an example of a kind of rank-$1$
phenomenon for $\Mod(S)$, our examples illustrate higher rank behavior.  Specifically, we could
compare our results with those of Wang \cite{wang}, who finds infinitely many conjugacy classes of
discrete, faithful representations of right-angled Artin groups (hence surface subgroups) into
higher rank Lie groups.  Furthermore, Long, Reid and  Thistlethwaite \cite{longreidthistlethwaite},
find infinitely many conjugacy classes of Zariski dense, purely semi-simple representations of a
surface group into $SL(3,\mathbb Z)$.  In fact, these surface groups are very closely related to
the ones we study, in the sense that every nontrivial element has positive translation length on the
associated symmetric space.

\subsection{Plan of the paper}

We begin in Section \ref{S:notation} by setting up the relevant definitions and notation we will
use throughout.  The section ends with a more precise version of our main theorem (Theorem
\ref{T:main2}).  In Section \ref{S:projections and distance} we describe an alternative space on
which $\Mod(S)$ acts, namely Masur and Minsky's graph of {\em markings} \cite{mm2}.  We also state
the required {\em distance formulas} (Theorems \ref{T:mod distance}, \ref{T:WP distance} and
\ref{T:Teich distance}) which provide the coarse estimates for the distances in the desired spaces,
$\Mod(S)$ and $\T(S)$, in terms of sums of ``local distances'' between pairs of markings. These
local distances are precisely the {\em subsurface distances}, also described in this section.

The idea of the proof of Theorem \ref{T:main2} is as follows.  The hypothesis implies that each of
the generators of the right-angled Artin group corresponds to a mapping class which makes progress
in some subsurface---that is, it contributes nontrivially to some local distance. A geodesic in the
Cayley graph of the right-angled Artin group determines a sequence of mapping classes, each of
which makes progress in some subsurface.  We need only ensure that this progress accumulates (that
is, we need to avoid cancelation of local distances).  This is verified by Theorem \ref{T:main
technical}, which relates a partial order on the set of syllables in a minimal length
representative for an element of the right-angled Artin group (see Section \ref{S:normal forms})
with the partial order from \cite{bkmm} on the set of subsurfaces ``between'' a marking and its
image under the associated mapping class (see Section \ref{S:time order}).  The details of the
proof of Theorem \ref{T:main technical} are carried out in Section \ref{S:proofs}, followed by the
proof of Theorem \ref{T:main2}.

In Section \ref{S:elements} we find the Thurston type of each element in the right-angled Artin
subgroups of $\Mod(S)$ we are considering.  We show that by conjugating to use the minimal number
of generators to represent the element, it will be pseudo-Anosov on the smallest subsurface filled
by the supports of the generators.  For this, we use Masur and Minsky's {\em Bounded Geodesic Image Theorem} \cite{mm2} to prove that the element acts with positive translation distance on the curve complex of the appropriate subsurface.

We end with a discussion of surface subgroups and the proofs of Corollary \ref{C:surfacesubgroups} and Proposition \ref{P:not purely pa}.\\

\noindent {\bf Acknowledgements.}  We would like to thank Richard Kent,  Alan Reid, Thomas Koberda and Jason Behrstock
for helpful conversations, and the {\em Hausdorff Research Institute for Mathematics} in Bonn, Germany, for its
hospitality while this work was being completed.

\section{Notation and terminology} \label{S:notation}

\subsection{Quasi-isometries}

Given $A \geq 1$ and $B \geq 0$, we write $x \qi y$ to mean
\[ \frac{y- B}{A} \leq x \leq Ay+B \]

If $(\frak X_1,d_1)$ and $(\frak X_2,d_2)$ are metric spaces and $A \geq 1$, $B \geq 0$, then an
{\em $(A,B)$--quasi-isometric embedding} from $\frak X_1$ to $\frak X_2$ is a map
\[F:\frak X_1 \to \frak X_2 \]
with the property that for all $x,y \in \frak X_1$, we have
\[ d_1(x,y) \qi d_2(F(x),F(y)).\]
If $F$ is an $(A,B)$--quasi-isometric embedding for some $A$ and $B$, then we will say that $F$ is
a {\em quasi-isometric embedding}.

If $F:\frak X_1 \to \frak X_2$ is a quasi-isometric embedding and there is a constant $D > 0$ so
that any point of $\frak X_2$ is within $D$ of some point of $F(\frak X_1)$, then $F$ is called a
{\em quasi-isometry}.

\subsection{Right angled Artin groups} \label{S:raag}

Let $\Gamma$ be a graph with vertex set $\{s_1,\ldots,s_n\}$.  The associated {\em right-angled
Artin group} $G = G(\Gamma)$, is defined to be the group with presentation
\[ G = \langle s_1,\ldots,s_n \, | \, [s_i,s_j] = 1 \mbox{ if }\{s_i,s_j\}\mbox{ is an edge of }\Gamma \rangle. \]
We will always work with the word metric on $G$ with respect to this generating set, and will
denote it $d_G$.

Examples of right-angled Artin groups are free groups, and direct products of free groups (in
particular, free abelian groups).  A simple example of a right-angled Artin group which is neither
free nor a product of free groups is $G(\Gamma)$ where $\Gamma$ is the cyclic graph with $5$
vertices shown in Figure \ref{F:pentagon}.

\begin{figure}[htb]
\begin{center}
\includegraphics[height=1.5truein]{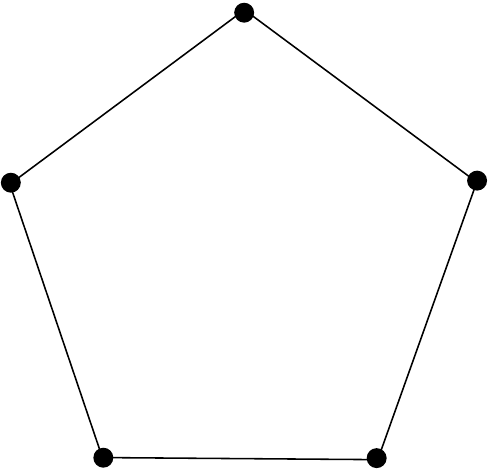}
\caption{The cyclic graph with $5$ vertices.} \label{F:pentagon}
\end{center}
\end{figure}

\subsection{Surfaces}

Given a connected surface $S$ of genus $g$ with $n$ punctures, the
{\em complexity} is defined to be $\xi(S)=3g-3+n$.  Unless otherwise
stated, we will assume throughout that $\xi(S) > 0$.  The {\em mapping
  class group} of $S$ is the group of isotopy classes of orientation
preserving homeomorphisms of $S$ and is denoted $\Mod(S)$. By a {\em
  curve} in $S$, we mean the isotopy class of an essential
(non-null-homotopic and non-peripheral) simple closed curve. A {\em
  pants decomposition of $S$} is a maximal collection of pairwise
disjoint curves in $S$.  Since $\xi(S) > 0$, a nonempty pants
decomposition exists and has precisely $\xi(S)$ curves in it.

A subsurface $X \subset S$ is {\em essential} if it is either a regular neighborhood of an
essential simple closed curve, or else a component of the complement of an open regular
neighborhood of a (possibly empty) union of pairwise disjoint essential simple closed curves. In
particular, we assume that essential subsurfaces are connected. We will generally not distinguish
between punctures and boundary components, and if $X \subset S$ has genus $h$ with $k$ punctures
and $b$ boundary components, then we will write $\xi(X) = 3g-3 + k + b$.  Finally, we will assume
that an essential subsurface $X$ has $\xi(X) \neq 0$, thus excluding a pair of pants as an
essential subsurface. The set of all isotopy classes of essential subsurfaces $X$ of $S$ with
$\xi(X) \neq 0$ will be denoted $\Omega(S)$.

\begin{figure}[htb]
\begin{center}
\includegraphics[height=1.5truein]{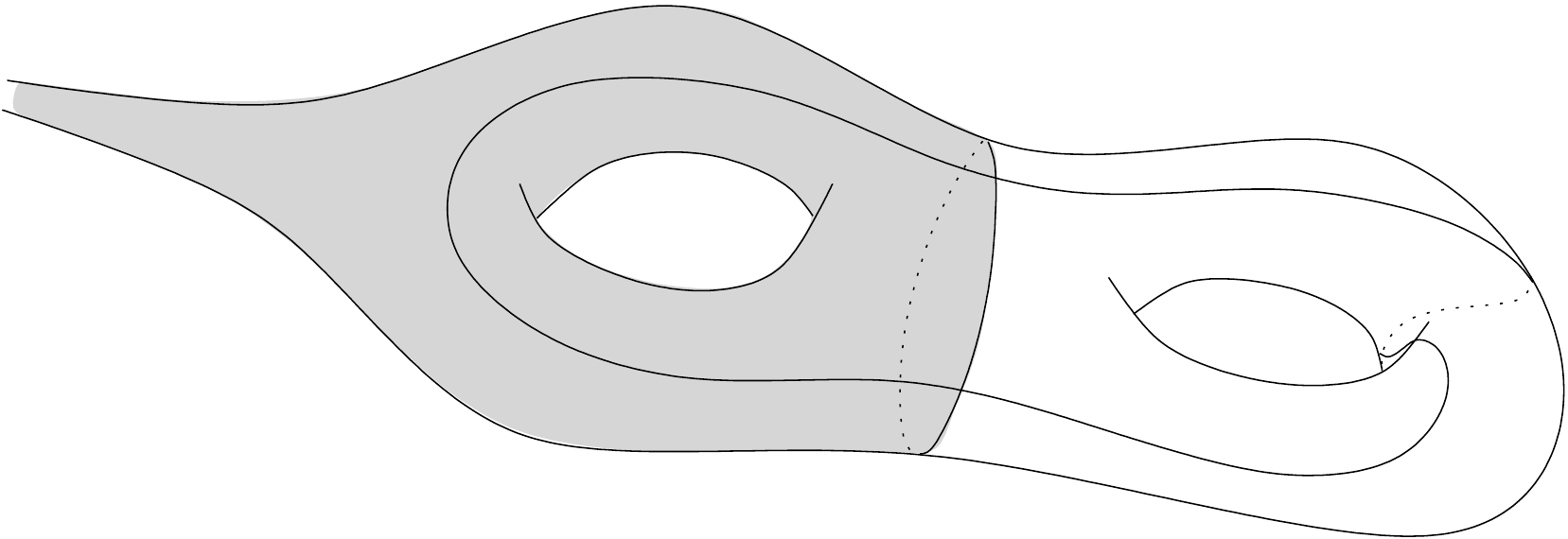}
\caption{A genus $2$ surface with $1$ puncture $S$, a subsurface $X$ (shaded) and a curve
$\gamma$.} \label{F:subsurface}
\end{center}
\end{figure}

We will often refer to the isotopy class of an essential subsurface simply as a {\em subsurface}.
Furthermore, we will choose nice representative for each curve and each subsurface, and will not
distinguish between a representative and its isotopy class when it is convenient.  To be precise,
we choose representatives as follows (annuli will play essentially no role in our discussion, so we
do not bother describing their preferred representatives).

Fix a complete hyperbolic metric on $S$, and realize each curve by its unique geodesic
representative. These representatives minimize the number of intersections (that is, they realize
geometric intersection number).  For each curve $\alpha$, we may choose some
$\epsilon_\alpha$--neighborhood $N(\alpha)$ so that for any curves $\alpha$ and $\beta$, the
intersections of $N(\alpha)$ and $N(\beta)$ correspond precisely to the intersections of $\alpha$
and $\beta$, and each such intersection is a ``product square'' (see Figure \ref{F:realization}).
For any nonannular subsurface $X$, which is a component of the complement of an open regular
neighborhood of $\alpha_1 \cup \cdots \cup \alpha_k$, we take its representative to be defined
as the corresponding component of the complement of the interior of the neighborhood $N(\alpha_1)
\cup \cdots \cup N(\alpha_k)$.

Suppose $X,Y \subsetneq S$ are representative subsurfaces.  Observe that $X \cap Y = \emptyset$ if
and only if $X$ and $Y$ cannot be isotoped to be disjoint.  In this case, we say that $X$ and $Y$
are {\em overlapping}, and write $X \pitchfork Y$ if $X \not \subseteq Y$ and $Y \not \subseteq X$.
One can check that this notion of overlapping agrees with that defined in \cite{bkmm}, which is to
say that $X \pitchfork Y$ if and only if some component of $\partial X$ cannot be isotoped disjoint
from $Y$ and some component of $\partial Y$ cannot be isotoped disjoint from $X$.

\subsection{Realizing a graph} \label{S:realizing a graph}

Given a graph $\Gamma$, a surface $S$, and a collection of nonannular subsurfaces $X_1,\ldots,X_n
\subset S$, we say that $\mathbb X = \{X_1,\ldots,X_n\}$ {\em realizes $\Gamma$ nicely in $S$} if
\begin{enumerate}
\item[(1)] $X_i \cap X_j = \emptyset$ if and only if $\{s_i,s_j\}$ is an edge of $\Gamma$, and
\item[(2)] whenever $X_i \cap X_j \neq \emptyset$, then $X_i \pitchfork X_j$.
\end{enumerate}


As Figure \ref{F:genus08cyclic5} indicates, there is a nice realization of the cyclic graph of
length $5$ in a genus $3$ surface obtained from a branched cover of the sphere, branched over 8
points.   By adding more points to this picture and taking a branched cover, we can produce nice
realizations of this graph in any surface of genus $g \geq 3$.  Moreover, given any graph it is
easy to find some surface and a collection of subsurfaces which provide a nice realization (see
\cite{crispparis,crispwiest1} for this kind of construction). We sketch one such construction here.

\begin{figure}[htp]
\begin{center}
\includegraphics[height=2truein]{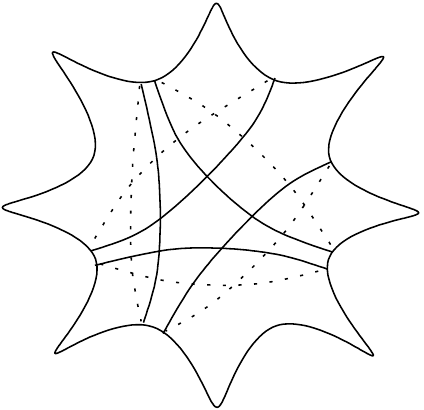}
\caption{This figure represents a sphere with 8 punctures containing five curves, each of which
bounds a disk with 3 punctures.  These five $3$--punctured disks provide a nonannular realization
of the cyclic graph with $5$ vertices.  Taking a two-fold branched cover over the 8 points, we
obtain a nonannular realization on a genus $3$ surface (by $1$--holed tori).}
\label{F:genus08cyclic5}
\end{center}
\end{figure}

\begin{figure}[htp]
\begin{center}
\includegraphics[height=3truein]{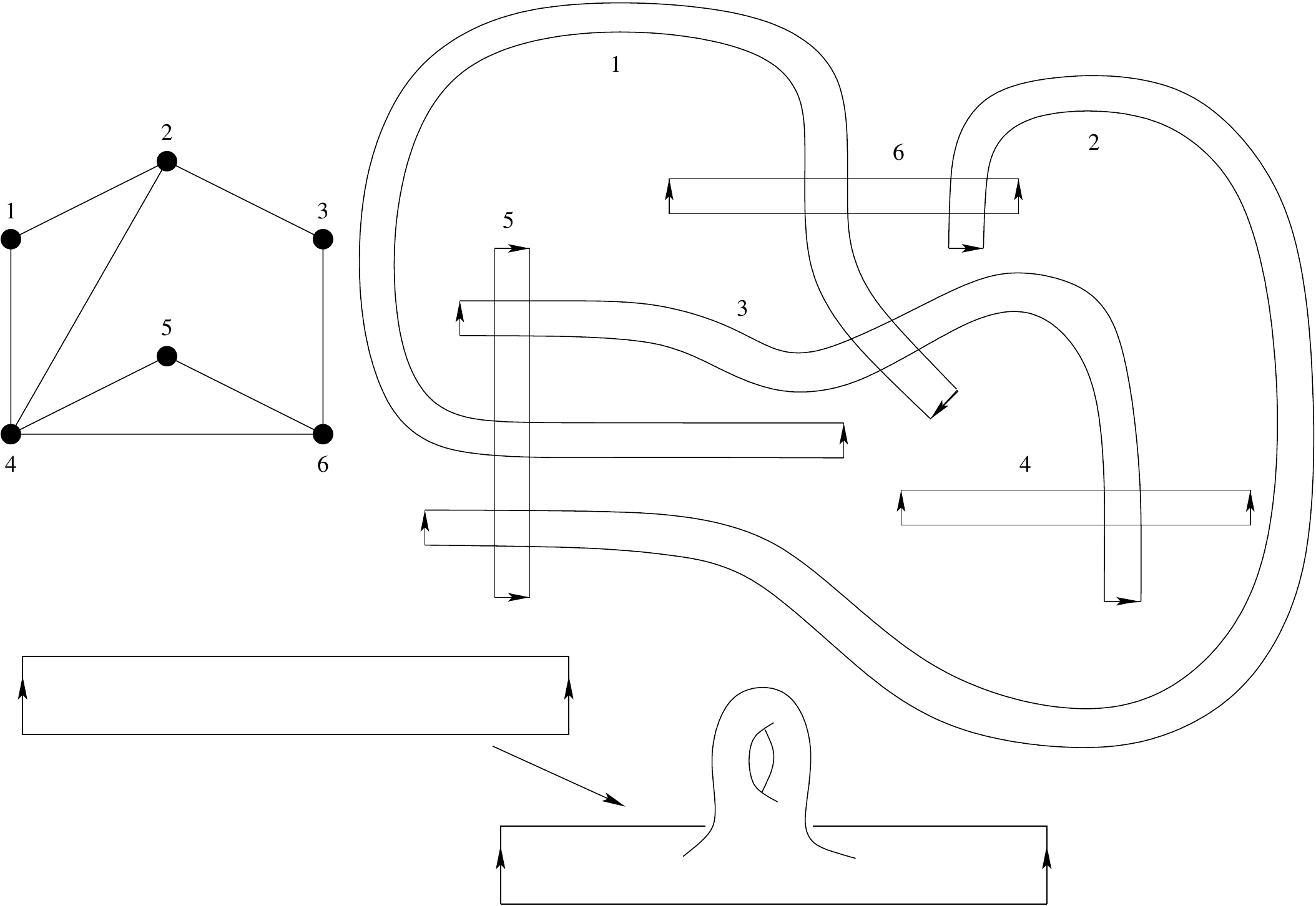}
\caption{A graph $\Gamma$ and the associated annuli glued together along product squares as
prescribed by $\Gamma$. At the bottom, we glue in a $1$--holed torus to an annulus with a disk
removed.} \label{F:realization}
\end{center}
\end{figure}

Starting with a graph $\Gamma$, we take a disjoint union of annuli, one for each vertex of
$\Gamma$.  Next, glue together the annuli along product squares whenever the associated vertices of
$\Gamma$ are not connected by an edge.  In each annulus, remove a disk and glue in a $1$--holed
torus. Finally, cap off the boundary components of the resulting surface with disks. See Figure
\ref{F:realization} for a particular example.

If $X$ is a nonannular subsurface of $S$ and $f \in \Mod(S)$ is the identity outside $X$, we say
that $f$ is {\em supported on} $X$.  We say that $f$ is {\em fully supported on $X$} if we also
have that $f$ is pseudo-Anosov on $X$.  If $f$ is supported on $X$, then $f$ acts on $\C(X)$, the
{\em curve complex of $X$}, and we let $\tau_X(f)$ denote the translation length of $f$ on $\C(X)$.
By a theorem of Masur and Minsky \cite{mm1}, if $f$ is supported on $X$, then it is fully supported
on $X$ if and only if $\tau_X(f) > 0$.  We refer the reader to \cite{mm1} for more details.

\subsection{Homomorphisms} \label{S:homomorphisms}

Suppose now that $\mathbb X = \{X_1,\cdots,X_n\}$ nicely realizes $\Gamma$ in $S$ and that $\mathbb
F = \{f_1,\ldots,f_n\} \subset \Mod(S)$ are mapping classes.  We say that $\mathbb F$ is (fully)
supported on $\mathbb X$ if $f_i$ is (fully) supported on $X_i$ for each $i = 1,\ldots,n$.  Since
homeomorphisms on disjoint subsurfaces commute, there is a unique homomorphism
\[\phi_\mathbb F:G \to \Mod(S) \]
defined by $\phi_\mathbb F(s_i) = f_i$.

We now state a more precise version of our main theorem.  We write $\T(S)$ for the Teichm\"uller
space, and we denote its two standard metrics by $d_\T$ for the Teichm\"uller metric and $d_\WP$
for the Weil--Petersson metric.

\begin{theorem} \label{T:main2} Given a graph $\Gamma$ and a nice realization $\mathbb X =
\{X_1,\ldots,X_n\}$ of $\Gamma$ in $S$, there exists a constant $C > 0$ with the following
property. If $\mathbb F = \{f_1,\ldots,f_n\}$ is fully supported on $\mathbb X$ and
$\tau_{X_i}(f_i) \geq C$ for all $i = 1,\ldots,n$, then the associated homomorphism
\[ \phi_\mathbb F:G(\Gamma) \to \Mod(S) \]
is a quasi-isometric embedding.  Furthermore, the orbit map $G \to \T(S)$ is a quasi-isometric
embedding for both $d_\T$ and $d_\WP$.
\end{theorem}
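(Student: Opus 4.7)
The upper bound is straightforward: $\phi_{\mathbb F}$ sends each standard generator $s_i$ to $f_i$, an element of bounded word length in $\Mod(S)$, so $\phi_{\mathbb F}$ is Lipschitz; composing with the orbit map to $\T(S)$ (in either metric) preserves the Lipschitz property. The entire content of the theorem lies in establishing the matching lower bounds, uniformly in $\mathbb F$ once $C$ is chosen large enough.

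The plan is to reduce all three lower bounds (for $d_{\Mod(S)}$, $d_\WP$, and $d_\T$) to a single statement about subsurface projections. Using the distance formulas (Theorems \ref{T:mod distance}, \ref{T:WP distance} and \ref{T:Teich distance}), after fixing a basepoint marking $\mu$, the distance $d_*(\mu, \phi_{\mathbb F}(w)\mu)$ is coarsely a sum $\sum_{Y}[d_Y(\mu, \phi_{\mathbb F}(w)\mu)]_K$ of threshold-cut subsurface projections (over an appropriate class of subsurfaces in each case, with the $\T$ and $\WP$ formulas omitting annular terms, which is precisely why the nonannular hypothesis appears). Thus it suffices to show that this sum is bounded below by a linear function of the word length $|w|_G$.

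Given $w \in G(\Gamma)$, choose a minimal length representative in the normal form discussed in Section \ref{S:normal forms}, writing $w = s_{i_1}^{k_1}\cdots s_{i_m}^{k_m}$ as a product of syllables with $\sum|k_j|\qi|w|_G$. Each syllable $s_{i_j}^{k_j}$ maps under $\phi_{\mathbb F}$ to $f_{i_j}^{k_j}$, fully supported on $X_{i_j}$, and since $f_{i_j}$ has translation length $\tau_{X_{i_j}}(f_{i_j})\geq C$ on $\C(X_{i_j})$, each syllable contributes at least $C|k_j|$ to the projection distance in $\C(X_{i_j})$ taken in isolation. The whole question is whether these individual contributions survive when the syllables are composed: a priori, subsurface projections from later syllables could cancel earlier ones in the same $X_i$. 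This is exactly where Theorem \ref{T:main technical} enters: it matches the syllable order in the normal form with the Behrstock--Kleiner--Minsky--Mosher time order on subsurfaces ``between'' $\mu$ and $\phi_{\mathbb F}(w)\mu$ (Section \ref{S:time order}). The comparison shows that each syllable is witnessed by a distinct subsurface (a translate of the corresponding $X_{i_j}$) at which the projection distance is bounded below by $C|k_j|$ minus a universal error, so that cancellations cannot occur.

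This is the main obstacle, and it relies on two things working together: the nice realization hypothesis, which guarantees that whenever the supports $X_{i_j}$ and $X_{i_{j'}}$ are not disjoint they overlap (so the BKMM partial order applies between them), and the choice of $C$ large enough relative to the MMS/BKMM thresholds to swamp all the error constants in Theorem \ref{T:main technical}. Granting that technical theorem, adding up over all syllables gives
\[
\sum_{Y}[d_Y(\mu,\phi_{\mathbb F}(w)\mu)]_K \;\geq\; C'\sum_{j=1}^m |k_j| - D \;\qi\; |w|_G,
\]
once $C$ is taken sufficiently large. Feeding this lower bound into each of the three distance formulas completes the proofs of both the quasi-isometric embedding of $\phi_{\mathbb F}$ into $\Mod(S)$ and the quasi-isometric embedding of the orbit map into $\T(S)$ with respect to $d_\T$ and $d_\WP$ simultaneously.
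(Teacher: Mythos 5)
Your proposal is correct and follows essentially the same route as the paper: the easy Lipschitz upper bound, then the lower bound obtained by combining the distance formulas (Theorems \ref{T:mod distance}, \ref{T:WP distance}, \ref{T:Teich distance}) with Theorem \ref{T:main technical}, which assigns to each syllable of a minimal word a distinct (nonannular) subsurface carrying projection distance at least $K|e_i|$. The only cosmetic difference is that the paper routes the mapping class group case through the orbit map to the marking graph $\M(S)$, which is a quasi-isometric model for $\Mod(S)$.
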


\noindent {\bf Remark.} We reiterate for the casual reader that the subsurfaces $X_i$ are
assumed to be essential, connected, and nonannular.\\\\
The proof of Theorem \ref{T:main2} will be carried out in Section \ref{S:proofs}.  Theorem
\ref{T:main2} easily implies Theorem \ref{T:main}.

\section{Projections and distance estimates} \label{S:projections and distance}

Our proof of Theorem \ref{T:main2} uses results from \cite{mm2}, \cite{brock}, \cite{rafi-metric}
and \cite{bkmm}.  The main construction we will use is that of {\em subsurface projection}, which
we now briefly recall.

\subsection{Projections} \label{S:projections}

\begin{figure}[ht]
\begin{center}
\includegraphics[height=1.5truein]{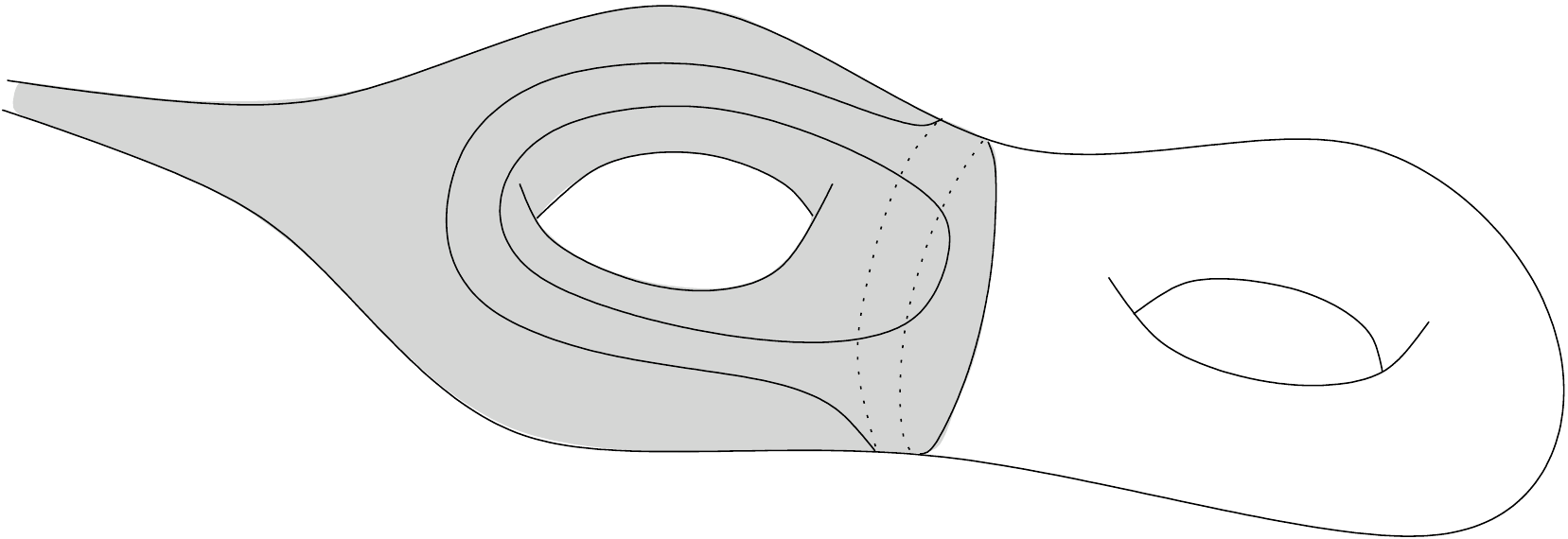}
\caption{The projection $\pi_X(\gamma)$, where $S$, $X$ and $\gamma$ are as in Figure
\ref{F:subsurface}.} \label{F:projection}
\end{center}
\end{figure}

Given a nonannular subsurface $X$ of $S$ and a curve $\gamma$, we define the {\em projection of
$\gamma$ to $X$}, denoted $\pi_X(\gamma)$, to be the subset of $\C(X)$ constructed as follows. If
$\gamma \cap X \neq \emptyset$, then either $\gamma$ is an essential simple closed curve in $X$,
and we define $\pi_X(\gamma) = \{\gamma\}$, or else $\gamma \cap X$ is a disjoint union of
essential arcs in $X$.  For each arc, consider $N$, the regular neighborhood of the arc union the
boundary components of $X$ which the arc meets. Then the boundary of $N$ is a union of curves in
$X$ (and components of $\partial X$), and we define $\pi_X(\gamma)$ to be the set of all such
curves in $X$, over all arcs of $\gamma \cap X$.  See Figure \ref{F:projection}. In general, the
curves in $\pi_X(\gamma)$ need not be disjoint, but the set has diameter at most $2$; see
\cite{mm2}.

When $X$ is an annulus and $\gamma$ a curve, there is also a notion of a projection to $X$, which
assigns to $\gamma$ a diameter one subset of the arc complex of $X$, denoted $\C(X)$, and again we
denote this by $\pi_X(\gamma)$.  For our purposes, simply the existence of this projection will
suffice, so for the details of its definition, we refer the reader to \cite{mm2}.

If $\gamma$ is a disjoint union of curves $\gamma_1 \cup \cdots \cup \gamma_k$, then we define
$\pi_X(\gamma)$ to be the union $\bigcup_i \pi_X(\gamma_i)$.  This set also has diameter at most $2$.
If $\gamma \cap X = \emptyset$, then $\pi_X(\gamma) = \emptyset$.

\subsection{Markings}

Another object we will need is a {\em marking}.  For us, this will mean a {\em complete clean
marking} in the sense of Masur and Minsky \cite{mm2}.  More precisely, a marking $\mu$ is a pants
decomposition called the {\em base of $\mu$}
\[ \base(\mu) = \{\alpha_1,\ldots,\alpha_{\xi(S)}\},\]
together with a {\em transversal} for each curve $\alpha_i \in \base(\mu)$: this is a diameter at
most one subset of $\C(X_i)$, where $X_i$ is the annular neighborhood of $\alpha_i$, together with some
additional properties which we will not need descriptions for; see \cite{mm2} for a discussion.

Masur and Minsky \cite{mm2} identify the set of all markings with the vertex set of a graph $\M(S)$
called the {\em marking graph of $S$}.  The edges of this graph correspond to certain {\em
elementary moves} one can perform on a marking.  We denote the resulting path metric on $\M(S)$ by
$d_\M$. The graph $\M(S)$ is locally finite, and $\Mod(S)$ acts by isometries on it.  In
particular, the orbit map of this action is a quasi-isometry.  We will use $\M(S)$ as a model for
$\Mod(S)$.

Any marking $\mu$ can be projected to a subsurface.   If $X$ is a nonannular subsurface, then
$\pi_X(\mu)$ is defined to be $\pi_X(\base(\mu))$.  For annuli, the projection is defined differently;
see \cite{mm2}.

\subsection{Distances}\label{S:distances}

Given a subsurface $X$ and curves or markings $\mu$ and $\mu'$, we
define their {\em distance in $X$} to be
\[ d_X(\mu,\mu') = \diam (\pi_X(\mu) \cup \pi_X(\mu')) \]
where the diameter is computed in $\C(X)$.

A trivial observation is that if $\mu,\mu'$ are curves or markings on
$S$, $f \in \Mod(S)$ is supported on $X$, and $Y$ is a nonannular
subsurface disjoint from $X$ such that $\mu$ and $\mu'$ have nonempty
projection to $Y$ then
\[ d_Y(\mu,f(\mu')) = d_Y(\mu,\mu'). \]

\noindent {\bf Remark.} We note that the validity of this observation relies on the assumption that
$Y$ is nonannular.\\

Given $K > 0$ and $\mu,\mu' \in \M(S)$, define
\[ \Omega(K,\mu,\mu') = \{ X \subseteq S \, | \, \xi(X) \geq 1 \mbox{ or } X \mbox{ is an annulus, and } d_X(\mu,\mu') \geq K \}. \]
It is convenient to decompose $\Omega(K,\mu,\mu')$ into the annular subsurfaces
$\Omega_a(K,\mu,\mu')$ and the nonannular subsurfaces $\Omega_n(K,\mu,\mu')$.

The following theorem is proven in \cite{mm2}.

\begin{theorem}[Masur-Minsky] \label{T:mod distance}
There exists $K_0 > 0$ (depending on $S$) so that if $K \geq K_0$, then there exists $A\geq 1$,
$B\geq 0$ with the following property. Given $\mu,\mu' \in \M(S)$ then
\[ d_\M(\mu,\mu') \, \, \qi \sum_{X \in \Omega(K,\mu,\mu')} d_X(\mu,\mu')\]
\end{theorem}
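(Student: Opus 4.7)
The plan is to prove the quasi-isometry by establishing the two implicit inequalities in $\qi$ separately, using the machinery of Masur--Minsky hierarchies.

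For the lower bound $\sum_X d_X(\mu,\mu') \leq A\, d_\M(\mu,\mu') + B$, I would first observe that a single elementary move between markings $\mu_i$ and $\mu_{i+1}$ changes $\pi_X(\mu_i)$ only by bounded diameter in $\C(X)$, for any $X$.  For nonannular $X$ this is immediate, since only a bounded number of curves in the base can be swapped; for annular $X$ one handles twist moves (which change only the projection to the annulus around the twisted curve, and there by $1$) and flip moves separately.  Consequently, for any geodesic $\mu = \mu_0, \mu_1, \dots, \mu_L = \mu'$ in $\M(S)$, the distance $d_X(\mu,\mu')$ is at most a fixed multiple of the number of edges at which $\pi_X$ actually changes.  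Naively summing over $X \in \Omega(K,\mu,\mu')$ threatens overcounting, since a single edge can alter $\pi_X$ for many different $X$; to control this I would invoke the bounded geodesic image theorem to restrict attention to those $X$ whose projection sequence tracks a geodesic in $\C(X)$, and then argue combinatorially that each edge is ``active'' in this tracked sense for only boundedly many such $X$.

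For the upper bound $d_\M(\mu,\mu') \leq A \sum_X d_X(\mu,\mu') + B$, I would construct a Masur--Minsky hierarchy $H = H(\mu,\mu')$: a collection of tight geodesics $\{g_Y\}_{Y \in \mathrm{dom}(H)}$ indexed by ``domains'', organized by a subordination relation, and anchored by a top-level tight geodesic in $\C(S)$ from $\pi_S(\mu)$ to $\pi_S(\mu')$.  A resolution of $H$ is an explicit sequence of markings interpolating between $\mu$ and $\mu'$ whose length in $\M(S)$ is, by the structure theorem for hierarchies, coarsely $\sum_{Y \in \mathrm{dom}(H)} |g_Y|$.  The bounded geodesic image theorem then guarantees that if $K$ is chosen sufficiently large (exceeding the constant in that theorem plus a small slack), every $X \in \Omega(K,\mu,\mu')$ must occur as a domain of $H$, and moreover the endpoints of $g_X$ lie uniformly close to $\pi_X(\mu)$ and $\pi_X(\mu')$, so that $|g_X|$ is coarsely $d_X(\mu,\mu')$ and the two sums agree up to the quasi-isometry constants.

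The main obstacle is the construction and fine control of hierarchies.  Defining tightness in each subsurface, assembling tight geodesics into a hierarchy with the correct subordination structure, proving that a resolution exists and that its $\M(S)$-length is comparable to $\sum |g_Y|$, and establishing the bounded geodesic image theorem, constitute the substantial technical content.  Since we need Theorem \ref{T:mod distance} only as a black box for the arguments to follow, I would cite these ingredients from \cite{mm2} rather than redevelop them, and focus the exposition on how they combine to produce the displayed estimate.
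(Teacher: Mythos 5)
The paper gives no proof of this statement at all: it is quoted verbatim as a known result, with the single line ``The following theorem is proven in \cite{mm2}'' serving as the entire justification, so your decision to treat the hierarchy machinery as a black box and cite \cite{mm2} is exactly what the authors do. Your sketch of the Masur--Minsky argument is a reasonable outline of what happens inside that black box, with the minor caveat that in \cite{mm2} both inequalities are routed through the hierarchy and its resolutions (in particular, the lower bound comes from showing that a single elementary move can advance only boundedly many component geodesics of the hierarchy, rather than from a bounded-activity argument along an arbitrary geodesic in $\M(S)$).
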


A theorem of Brock \cite{brock} states that the Weil--Petersson metric on Teichm\"uller space is
quasi-isometric to the {\em pants graph}.  In \cite{mm2}, Masur and Minsky give a formula similar
to that of the previous formula for distance in the pants graph.  In particular combining these two
results one obtains the following.

\begin{theorem}[Brock, Masur-Minsky] \label{T:WP distance}
There exists $K_0 > 0$ (depending on $S$) so that if $K \geq K_0$, then there exists $A\geq 1$,
$B\geq 0$ with the following property. If $\mu,\mu' \in \M(S)$ are shortest markings for $m,m' \in
\T(S)$, respectively, then
\[ d_\WP(m,m') \, \, \qi \sum_{X \in \Omega_n(K,\mu,\mu')} d_X(\mu,\mu')\]
\end{theorem}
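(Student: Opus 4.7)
The plan is to obtain Theorem \ref{T:WP distance} by combining Brock's quasi-isometry between the Weil--Petersson Teichm\"uller space and the pants graph with a Masur--Minsky style distance formula for the pants graph that sums only over \emph{nonannular} subsurfaces.

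First, I would invoke Brock's theorem \cite{brock}: the assignment sending $m \in \T(S)$ to any Bers (i.e.\ shortest) pants decomposition $P_m$ of $m$ is a quasi-isometry from $(\T(S), d_\WP)$ to the pants graph $(P(S), d_P)$. This reduces the problem to coarsely estimating $d_P(P_m, P_{m'})$.

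Next, I would invoke the Masur--Minsky distance formula for $P(S)$ proved in \cite{mm2}. This is directly analogous to Theorem \ref{T:mod distance}, but because pants decompositions carry no transversal data it only sums over nonannular subsurfaces: for some $K_0 > 0$ and every $K \geq K_0$ and every pair of pants decompositions $P, P'$,
\[ d_P(P, P') \, \, \qi \sum_{X \in \Omega_n(K, P, P')} d_X(P, P'). \]
Combining this with Brock's quasi-isometry yields the desired estimate with $P_m, P_{m'}$ in place of the marking bases.

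It remains to bridge ``shortest pants decomposition'' and ``base of shortest marking''. Shortest markings $\mu, \mu'$ are constructed so that their bases consist of curves of uniformly bounded hyperbolic length, hence $\base(\mu)$ either equals $P_m$ or differs from it by a bounded number of elementary moves in $P(S)$, with bound depending only on $S$. In particular $d_P(\base(\mu), P_m) \leq D$ for a universal $D = D(S)$, and by the (coarse) Lipschitz behavior of subsurface projections we have $d_X(\base(\mu), P_m) \leq D'$ uniformly in the nonannular subsurface $X$. Since $\pi_X(\mu) = \pi_X(\base(\mu))$ for every nonannular $X$, we may substitute $\base(\mu), \base(\mu')$ for $P_m, P_{m'}$ in the pants--graph distance formula with only a bounded additive error in each surviving summand; this error is absorbed into the quasi-isometry constants $A, B$ after possibly enlarging $K$ so that uniformly bounded terms drop out of the sum.

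The main obstacle is the third step: verifying that passing from Bers pants decompositions to bases of shortest markings introduces only bounded error, both globally in $d_P$ and locally in each subsurface projection, and that these errors can be cleanly absorbed into the constants of the distance formula. This is a standard bookkeeping argument within the Masur--Minsky framework, but it is where all the care must go; the rest of the proof is a direct concatenation of \cite{brock} and \cite{mm2}.
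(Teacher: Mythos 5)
Your proposal is correct and follows exactly the route the paper takes: the paper states this result as the concatenation of Brock's quasi-isometry between $(\T(S),d_\WP)$ and the pants graph with Masur and Minsky's distance formula for the pants graph, which involves only nonannular subsurfaces. The bridging step you flag is even simpler than you suggest, since the paper \emph{defines} a shortest marking so that its base is a shortest (Bers) pants decomposition and notes the transversals are irrelevant here.
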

A {\em shortest marking} for $m$ is just a marking for which the pants decomposition has the
shortest total length among all pants decompositions, and the transversals are projections of the
shortest curves among those which can be used for transversals.   For this theorem, the
transversals are unimportant.

The analogous result for the Teichm\"uller metric was proven by Rafi in \cite{rafi-metric}.

\begin{theorem}[Rafi] \label{T:Teich distance}
There exists $K_0 > 0$ (depending on $S$) so that if $K \geq K_0$ and $\epsilon > 0$ then there
exists $A\geq 1$, $B\geq 0$ with the following property. If $\mu,\mu' \in \M(S)$ are shortest
markings for $m,m'$ in the $\epsilon$--thick part of $\T(S)$, respectively, then
\[ d_\T(m,m') \, \,  \qi \sum_{X \in \Omega_n(K,\mu,\mu')} d_X(\mu,\mu') + \sum_{X \in \Omega_a(K,\mu,\mu')} \log(d_X(\mu,\mu')) \]
\end{theorem}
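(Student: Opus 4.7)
The plan is to mimic Rafi's strategy: analyze the Teichmüller geodesic $\gamma\co[0,d_\T(m,m')]\to\T(S)$ between $m$ and $m'$ and its associated quadratic differential, decompose $[0,d_\T(m,m')]$ into a ``thick part'' and ``thin parts'' indexed by short curves, and match the time spent in each piece with the appropriate subsurface projection contribution. The underlying principle is that a Teichmüller geodesic enters the thin region of a curve $\alpha$ exactly when $\alpha$ is about to be heavily twisted (giving annular contribution), and enters a ``product region'' indexed by a subsurface $Y$ exactly when a long combinatorial word needs to be realized in $Y$ (giving nonannular contribution).

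For the lower bound, I would proceed by associating to each $X\in\Omega(K,\mu,\mu')$ an \emph{active interval} $I_X\subset[0,d_\T(m,m')]$ and showing that these intervals have controlled overlaps and that the Teichmüller length of $I_X$ gives the right contribution. For an annulus around a curve $\alpha$, one shows that whenever $d_\alpha(\mu,\mu')$ is large, $\alpha$ must become short on the geodesic: the modulus of the Euclidean annulus around $\alpha$ in the quadratic differential must grow at least like $d_\alpha$ to accommodate the twisting, and modulus grows at most exponentially in Teichmüller time, producing an interval of length $\asymp \log d_\alpha(\mu,\mu')$. For a nonannular $Y$ with large $d_Y(\mu,\mu')$, one identifies an interval during which every component of $\partial Y$ is short; during this interval the quadratic differential restricted to $Y$ is, up to uniformly bounded error, a Teichmüller geodesic in $\T(Y)$, and by an inductive application of Theorems \ref{T:mod distance} and \ref{T:Teich distance} on $Y$ (together with the fact that $Y$ is stably in the thick part of $\T(Y)$ by the collar lemma) this interval has length $\asymp d_Y(\mu,\mu')$. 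Summing over $X\in\Omega(K,\mu,\mu')$, and handling the thick portion of the geodesic by comparing $d_\T$ and $d_\M$ restricted to the $\epsilon$--thick part (where the two metrics are bi-Lipschitz by a standard Teichmüller-metric estimate on thick Riemann surfaces), yields the desired lower bound on $d_\T(m,m')$.

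For the upper bound, the strategy is to build an explicit path in $\T(S)$ from $m$ to $m'$ whose length is bounded by the combinatorial sum. Using Theorem \ref{T:mod distance} I would first find a sequence of markings $\mu=\mu_0,\mu_1,\ldots,\mu_N=\mu'$ in $\M(S)$ realizing the estimate. Then each elementary move between consecutive markings is realized by a path in $\T(S)$: moves supported in nonannular subsurfaces are realized by short Teichmüller segments in thick parts (cost $O(1)$ each, total linear in the nonannular sum), while twists around a curve $\alpha$ are grouped together and realized by a path that first pinches $\alpha$ to length $1/d_\alpha$, performs the twisting at roughly unit Teichmüller cost per $e^t$ twists, and then unpinches; grouping $k$ twists costs $\asymp\log k$ rather than $k$. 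One must be careful to carry out the pinching inside the appropriate product region so that the cost of the pinching is absorbed into the nonannular contributions; this uses Minsky's product region theorem.

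The main obstacle is the nonannular case of the lower bound, namely proving that the active interval $I_Y$ of a subsurface $Y$ with large $d_Y(\mu,\mu')$ exists and has the predicted length. This requires three nontrivial ingredients working in tandem: (i) Rafi's characterization of short curves along a Teichmüller geodesic via the curvature of extremal length, showing that when a subsurface projection is large, all boundary curves of $Y$ become simultaneously short; (ii) an estimate identifying the restriction of the quadratic differential to $Y$ during $I_Y$ with a geodesic in $\T(Y)$ up to additive error coming from the collars; and (iii) an induction on complexity $\xi(S)$ using the formula itself on the subsurface $Y$. Assembling these without circularity, and controlling the overlap between the intervals $I_X$ and $I_{X'}$ for nested or disjoint subsurfaces, is the real technical core of the argument.
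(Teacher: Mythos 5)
This theorem is not proved in the paper at all: it is imported verbatim from Rafi's work \cite{rafi-metric} (stated here in the special case where the endpoints lie in the $\epsilon$--thick part, which is why the extra terms of Rafi's general formula involving curves that are short at $m$ or $m'$ do not appear). So there is no internal argument to compare yours against; the relevant comparison is with Rafi's paper, and your outline does track his actual strategy: thick--thin decomposition of the Teichm\"uller geodesic, Rafi's characterization of short curves along a geodesic, Minsky's product regions theorem, the logarithmic cost of twisting through a thin region, and the Masur--Minsky machinery for the combinatorial side.

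As a proof, however, what you have written is a plan with the hard part left open, and two specific points need repair. First, the claim that $d_\T$ and $d_\M$ are ``bi-Lipschitz'' on the $\epsilon$--thick part is false for the restricted Teichm\"uller metric: for a high power of a Dehn twist $T_\alpha^n$ applied to a thick point, $d_\M$ grows linearly in $n$ while $d_\T$ grows like $\log n$, with both endpoints thick. Only the induced path metric on the thick part is quasi-isometric to the marking graph (via Milnor--\v{S}varc), and the discrepancy between the restricted and induced metrics is exactly the source of the annular $\log$ terms; your argument is salvageable because you only apply the comparison along subsegments of the geodesic that stay thick, but it must be phrased that way. Second, your proposed induction on $\xi(S)$, applying the Teichm\"uller formula itself to the subsurface $Y$ during its active interval, is where the circularity you worry about actually lives; Rafi avoids it by combining Minsky's product region theorem with the marking/pants distance formulas on the complementary subsurfaces (your Theorems \ref{T:mod distance} and \ref{T:WP distance} applied to $Y$) rather than the Teichm\"uller formula on $Y$, together with his short-curve estimates to locate and bound the active intervals and to control their overlaps. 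Until that nonannular lower-bound step and the interval-overlap bookkeeping are carried out, the proposal records the right ingredients but does not yet constitute a proof.
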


\noindent {\bf Remark.}  Strictly speaking, Theorems \ref{T:mod distance} and \ref{T:WP distance}
would be sufficient for our purposes since, up to a constant, $d_\WP$ provides a lower bound for
$d_\T$ by a result of Linch \cite{linch}, and the lower bound on distortion is the only nontrivial
inequality we need to prove. However, it seems worthwhile to include Theorem \ref{T:Teich distance}
as this illustrates a common interpretation for all of the metric spaces $\M(S)$ (or $\Mod(S)$),
$(\T(S),d_\T)$, and $(\T(S),d_\WP)$.\\

One final result about distances and subsurface projections which we will need is the following Bounded Geodesic Image Theorem \cite{mm2}.

\begin{theorem}[Masur-Minsky] \label{T:BGI}
There exists $K_0 > 0$ (depending on $S$) so that if $\{v_1,\ldots,v_n\}$ is a geodesic in $\C(S)$ and $X \in \Omega(S)$, then either $\pi_Y(v_j) = \emptyset$ for some $j$ or else
\[ \diam_X(\{\pi_X(v_1),\ldots,\pi_X(v_n)\}) < K_0.\]
\end{theorem}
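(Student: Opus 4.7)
The plan is to argue by contradiction via the hierarchy machinery of \cite{mm2}. Suppose $g = \{v_1, \ldots, v_n\}$ is a geodesic in $\C(S)$ with $\pi_X(v_j) \neq \emptyset$ for every $j$; I want to bound $\diam_X(\pi_X(g))$ by a universal constant $K_0$. First I would extend $v_1$ and $v_n$ to complete clean markings $\mu, \mu' \in \M(S)$ and build a hierarchy $H$ of tight geodesics between them. The main geodesic $g_H$ of $H$ is a tight geodesic in $\C(S)$ from $v_1$ to $v_n$; since $\C(S)$ is Gromov hyperbolic, $g$ and $g_H$ fellow-travel to within a uniform constant, and their subsurface projection data agree up to bounded additive error, so after enlarging $K_0$ I may work with $g_H$ in place of $g$.

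Next, assume for contradiction that $\diam_X(\pi_X(g_H)) > K_0$, with $K_0$ chosen above the threshold used in the hierarchy construction. Then the component domain machinery of \cite{mm2} guarantees that $H$ contains a tight geodesic $h$ supported in $X$, with endpoints at bounded distance from $\pi_X(\mu)$ and $\pi_X(\mu')$. By the subordinacy structure of a hierarchy, $h$ is forward and backward subordinate to other geodesics of $H$; iterating the subordinacy relation, one eventually reaches a subordination step in which $X$ appears as a component domain of some simplex $\sigma_i$ of the main geodesic $g_H$. Then $\partial X \subset \sigma_i$, so at least one curve in $\sigma_i$ bounds $X$ and hence has no essential arcs in the interior of $X$, producing a vertex with empty projection to $X$. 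Transferring this conclusion back to $g$ via the fellow-traveling step above completes the contradiction.

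The main obstacle is the careful invocation of the hierarchy machinery, especially the fact that large subsurface projections force subordinate geodesics and that climbing the subordinacy tree terminates at the main geodesic with $\partial X$ appearing in a simplex. A secondary technical point is ensuring that the reduction from the arbitrary geodesic $g$ to the tight geodesic $g_H$ does not lose the nonemptiness hypothesis: one must show that if a vertex disjoint from $X$ appears in $g_H$, then a corresponding vertex of $g$ is also disjoint from $X$, which is where the constant $K_0$ absorbs the hyperbolicity constants of $\C(S)$ together with the hierarchy threshold.
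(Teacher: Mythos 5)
First, a point of orientation: the paper does not prove this statement at all---it is the Bounded Geodesic Image Theorem, quoted verbatim from \cite{mm2} (Theorem 3.1 there) and used as a black box. So there is no in-paper argument to compare against, and your proposal has to stand on its own as a proof of a theorem of Masur and Minsky.

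As written it does not stand, for two reasons. The decisive one is circularity: the step where you say ``the component domain machinery of \cite{mm2} guarantees that $H$ contains a tight geodesic $h$ supported in $X$'' is the Large Link Lemma (Lemmas 6.1--6.2 of \cite{mm2}), and the proof of that lemma is itself an application of the Bounded Geodesic Image Theorem---one bounds the projections to $X$ of all geodesics of $H$ whose domains are not nested in $X$ precisely by invoking Theorem 3.1 on each of them. In the logical architecture of \cite{mm2}, BGI comes first (Section 3, before hierarchies are even defined) and the subordinacy/component-domain structure is downstream of it, so you cannot climb back up that ladder to prove BGI. The second problem is the reduction from $g$ to the main geodesic $g_H$. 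Fellow-traveling in $\C(S)$ gives you no control on subsurface projections: two curves at distance $2$ in $\C(S)$ can have projections to $X$ that are arbitrarily far apart in $\C(X)$ (apply a high power of a partial pseudo-Anosov supported on $X$ to one of them), so the claim that the projection data of $g$ and $g_H$ ``agree up to bounded additive error'' is unjustified---indeed, controlling exactly this kind of comparison is what BGI is for. For the same reason the nonemptiness hypothesis does not transfer: a vertex of $g_H$ disjoint from $X$ need not sit near a vertex of $g$ disjoint from $X$, and no enlargement of $K_0$ fixes this. If you want an actual proof, the route is the direct one: either the original argument of \cite{mm2}, which uses only the hyperbolicity of $\C(S)$ together with the elementary fact that disjoint curves both meeting $X$ have projections within distance $4$, or Webb's later combinatorial proof, which is elementary and yields explicit constants.
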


In particular, note that if $v,v' \in \C(S)$ are two curves with $d_X(v,v') \geq K_0$, then any geodesic between $v$ and $v'$ in $\C(S)$ must pass through a curve $v''$ disjoint from $X$ (for example, it may pass through a curve in $\partial X$).

For simplicity, we will assume, as we may, that $K_0$ is the same constant in all of the theorems in this section.\\

\subsection{Partial order on subsurfaces} \label{S:time order}

In \cite{bkmm}, Behrstock, Kleiner, Minsky and Mosher defined a partial order on
$\Omega(K,\mu,\mu')$ (for $K$ sufficiently large) which is closely related to the time-order
constructed in \cite{mm2} (see also \cite{bm}). However, as is noted in \cite{bkmm}, while the
time-order in \cite{mm2} (which is defined on geodesics in hierarchies) requires a fair amount of
the hierarchy machinery to describe it, the partial order on $\Omega(K,\mu,\mu')$ is completely
elementary.   As this is the basic tool we will use, we include the construction and verification
of the necessary properties of this partial order, for the sake of completeness.

The starting point is the ``Behrstock inequality'' \cite{behr} (see also \cite{mangahas}, Lemma
2.5, for the version stated here).
\begin{proposition} \label{P:Behrstock}
Suppose $X$ and $Y$ are overlapping subsurfaces of $S$ and $\mu$ is a marking on $S$.  Then
\[ d_X(\partial Y,\mu) \geq 10 \Rightarrow d_Y(\partial X,\mu) \leq 4. \]
\end{proposition}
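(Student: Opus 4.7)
The plan is to reduce the inequality to a statement about a single base curve of $\mu$, and then verify that statement by a topological analysis of arcs in the overlap of $X$ and $Y$; this is the strategy of \cite{behr}, in the form simplified in \cite{mangahas}.

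First I would extract a single good base curve. Since $d_X(\partial Y,\mu) \geq 10 > 0$, we have $\pi_X(\mu) \neq \emptyset$, so some $\alpha \in \base(\mu)$ satisfies $\pi_X(\alpha) \neq \emptyset$. The standard fact that two disjoint multicurves with nonempty projection to a common subsurface $Z$ project within diameter $2$ in $\C(Z)$, applied to the pairwise disjoint curves of $\base(\mu)$ and to the components of $\partial Y$, gives $\diam_X(\pi_X(\mu)) \leq 2$ and $\diam_X(\pi_X(\partial Y)) \leq 2$. Combining these with the hypothesis forces $d_X(\alpha, \partial Y) \geq 6$, and in particular, by the contrapositive of the same disjointness fact, $\alpha$ must essentially intersect $\partial Y$, so $\pi_Y(\alpha) \neq \emptyset$.

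The heart of the proof is showing $d_Y(\alpha, \partial X) \leq 2$. Granting this, one more application of the disjointness fact to pairs $(\alpha, \beta)$ with $\beta \in \base(\mu)$ yields $d_Y(\partial X, \mu) \leq 2 + 2 = 4$. I would prove $d_Y(\alpha, \partial X) \leq 2$ by case analysis. In the first case, $\alpha \cap \partial X = \emptyset$; then $\pi_X(\alpha) \neq \emptyset$ forces $\alpha$ to lie in $X$, so $\alpha$ and $\partial X$ are disjoint multicurves both with nonempty projection to $Y$ (the latter because $X \pitchfork Y$), and the disjointness fact gives the bound directly. In the second case, $\alpha$ crosses $\partial X$; then I look for an arc $\eta$ of $\alpha \cap Y$ disjoint from $\partial X \cap Y$. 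If such an $\eta$ exists, the boundary curves of a regular neighborhood of $\eta \cup \partial Y$, which lie in $\pi_Y(\alpha)$, can be realized disjointly from the analogous boundary curves for $\partial X \cap Y$, yielding the bound. Otherwise every arc of $\alpha \cap Y$ crosses $\partial X$, and a surgery on a sub-arc of $\alpha \cap X \cap Y$---whose endpoints then lie on $\partial X \cap Y$---produces a curve in $Y$ witnessing the bound.

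The main obstacle is the topological/surgery argument in the last sub-case: one must exhibit a disjoint realization of the projection curves via a careful case analysis of how the arcs of $\alpha$, $\partial X$, and $\partial Y$ interact inside the overlap region $X \cap Y$. This is essentially the content of Behrstock's original lemma, and the specific constants $10$ and $4$ in the statement are tuned to this analysis rather than being optimal.
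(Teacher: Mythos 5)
You should first be aware that the paper does not prove this proposition at all: it is imported verbatim from Behrstock \cite{behr}, with the explicit constants $10$ and $4$ coming from \cite{mangahas}, Lemma 2.5 (where the elementary, constant-explicit argument is credited to Leininger). So you are reconstructing a cited proof rather than competing with one in the text. Your architecture is the right one and most of the reductions are sound: the diameter-$2$ bound for projections of simplices in the arc-and-curve complex does give $d_X(\alpha,\partial Y)\ge 6$ for a base curve $\alpha$ with $\pi_X(\alpha)\neq\emptyset$; this forces $i(\alpha,\partial Y)>0$ and hence $\pi_Y(\alpha)\neq\emptyset$; the final bookkeeping $d_Y(\partial X,\mu)\le 2+2$ is correct once $d_Y(\alpha,\partial X)\le 2$ is known; and your Case 1 ($\alpha\subset X$) is fine.

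The genuine gap is the last sub-case, which is where the entire content of the lemma lives. Two problems. First, in the sub-case where every arc of $\alpha\cap Y$ crosses $\partial X$, your argument never invokes the hypothesis $d_X(\alpha,\partial Y)\ge 6$. But the implication is genuinely one-directional: that configuration is perfectly compatible with $d_Y(\alpha,\partial X)$ being arbitrarily large (replace $\alpha$ by $f^n(\alpha)$ for $f$ pseudo-Anosov supported on $Y$), so no hypothesis-free surgery can produce a uniform bound there. Second, the curve obtained by surgering a proper sub-arc of $\alpha\cap X\cap Y$ along $\partial X$ is not an element of $\pi_Y(\alpha)$, which is built from entire components of $\alpha\cap Y$; you would still need to bound its distance in $\C(Y)$ from $\pi_Y(\alpha)$, and nothing in your construction does so (nor need the endpoints of such a sub-arc lie on $\partial X$ rather than $\partial Y$). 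The missing step is to first use $d_X(\alpha,\partial Y)\ge 6$ to deduce that \emph{every} essential arc or closed-curve component of $\alpha\cap X$ must essentially intersect $\partial Y$ (otherwise such a component together with the arcs of $\partial Y\cap X$ spans a simplex, forcing $d_X(\alpha,\partial Y)\le 4$); it is this constraint on how $\alpha$ traverses $X$ that makes the surgery yield an essential curve in $Y$ uniformly close to both $\pi_Y(\alpha)$ and $\pi_Y(\partial X)$. As written, the decisive sub-case is asserted rather than proved. Separately, the proposition is applied in the paper to annular subsurfaces in $\Omega(K,\mu,\mu')$, where $\pi_Y$ is defined via the annular cover and the regular-neighborhood description of the projection does not apply, so that case requires its own (standard but different) argument.
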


Suppose $K \geq 20$ and we define the partial order as follows. Given $X,Y \in \Omega(K,\mu,\mu')$
with $X \pitchfork Y$, then we write $X \prec Y$ if
\begin{equation} \label{E:partialorder} d_X(\mu,\partial Y) \geq 10. \end{equation}
That this is a strict partial order is a consequence of the following useful description of
$\prec$.

\begin{proposition} \label{P:orderdescribed}
Suppose $K \geq 20$ and $X,Y \in \Omega(K,\mu,\mu')$ with $X \pitchfork Y$.  Then $X$ and $Y$ are
ordered and the following are equivalent
\[\begin{array}{ll}
(1) \quad X \prec Y \quad \quad \quad \quad \quad \quad \quad & (5) \quad d_Y(\mu',\partial X) \geq 10 \quad \quad \quad \quad \quad \quad\\\\
(2) \quad d_X(\mu,\partial Y) \geq 10 & (6) \quad d_Y(\mu',\partial X) \geq K - 4\\\\
(3) \quad d_X(\mu,\partial Y) \geq K - 4 & (7) \quad d_Y(\mu,\partial X) \leq 4\\\\
(4) \quad d_X(\mu',\partial Y) \leq 4\\ \end{array} \]
\end{proposition}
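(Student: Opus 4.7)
The plan is to combine the Behrstock inequality (Proposition \ref{P:Behrstock}), which transfers distance information between the overlapping subsurfaces $X$ and $Y$, with the triangle inequality in $\C(X)$ and $\C(Y)$, which together with the hypothesis $X,Y \in \Omega(K,\mu,\mu')$ transfers distance information between the markings $\mu$ and $\mu'$ using $d_X(\mu,\mu') \geq K$ and $d_Y(\mu,\mu') \geq K$. The whole argument is a bookkeeping exercise organized around alternating these two moves, and the threshold $K \geq 20$ is chosen precisely so that $K - 4 \geq 10$, i.e., the output of one step is strong enough to feed back into Behrstock at the next step without the estimates degrading.

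For the equivalences, I would first note that (1) and (2) are identical by definition, and that (3) $\Rightarrow$ (2) and (6) $\Rightarrow$ (5) are immediate since $K - 4 \geq 16 \geq 10$. The heart of the argument is the cycle
\[ (2) \;\Rightarrow\; (7) \;\Rightarrow\; (6) \;\Rightarrow\; (4) \;\Rightarrow\; (3), \]
which closes the loop. Specifically, (2) $\Rightarrow$ (7) is Proposition \ref{P:Behrstock} applied to $\mu$; (7) $\Rightarrow$ (6) is the triangle inequality in $\C(Y)$ together with $d_Y(\mu,\mu') \geq K$; (6) $\Rightarrow$ (4) is Proposition \ref{P:Behrstock} applied to $\mu'$ (which is legal because $K-4 \geq 10$); and (4) $\Rightarrow$ (3) is the triangle inequality in $\C(X)$ together with $d_X(\mu,\mu') \geq K$. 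To incorporate (5), observe that (5) $\Rightarrow$ (4) is a further application of Behrstock to $\mu'$ with the roles of $X$ and $Y$ reversed, completing the chain of equivalences.

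For the orderedness statement, I would argue directly: suppose $X \not\prec Y$, so $d_X(\mu,\partial Y) < 10$. Since $d_X(\mu,\mu') \geq K \geq 20$, the triangle inequality gives $d_X(\mu',\partial Y) \geq K - 10 \geq 10$, and applying Proposition \ref{P:Behrstock} to $\mu'$ yields $d_Y(\mu',\partial X) \leq 4$. Combining this with $d_Y(\mu,\mu') \geq K$ via the triangle inequality forces $d_Y(\mu,\partial X) \geq K - 4 \geq 10$, which is exactly $Y \prec X$. I do not expect any real obstacle: once the ``ping-pong'' strategy between $X$/$Y$ (via Behrstock) and $\mu$/$\mu'$ (via triangle inequality) is set up, every claim reduces to a one-line numerical check using $K \geq 20$.
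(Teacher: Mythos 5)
Your proof is correct and follows essentially the same route as the paper: the same ping-pong between Proposition \ref{P:Behrstock} and the triangle inequality with $d_X(\mu,\mu'), d_Y(\mu,\mu') \geq K$, with only a cosmetically different routing of the implication cycle, and the same argument (one of $d_X(\mu,\partial Y)$, $d_X(\mu',\partial Y)$ must be at least $10$) for orderedness. The only detail worth adding is the explicit remark that $X \pitchfork Y$ guarantees $\pi_X(\partial Y)$ and $\pi_Y(\partial X)$ are nonempty, so all the quantities appearing in the triangle inequalities are defined.
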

\begin{proof}
Assume the hypothesis of the proposition.  Since $X \pitchfork Y$, we know that $\pi_X(\partial Y)
\neq \emptyset$ and $\pi_Y(\partial X) \neq \emptyset$. To verify the equivalences, first observe
that (1) and (2) are equivalent by definition, and since $K - 4 > 10$, (3) implies (2) and (6)
implies (5). Next, since $d_X(\mu,\mu'),d_Y(\mu,\mu') \geq K$, the triangle inequality guarantees
that (4) implies (3) and (7) implies (6). Furthermore, since $K-4 > 10$, Proposition
\ref{P:Behrstock} tells us that (2) implies (7) and (5) implies (4). This proves all the required
implications.

Finally, we prove that $X$ and $Y$ are ordered.  By the triangle inequality we have
\[ 20 \leq K \leq d_X(\mu,\mu') \leq d_X(\mu,\partial Y) + d_X(\mu',\partial Y).\]
and so one of $d_X(\mu,\partial Y)$ or $d_X(\mu',\partial Y)$ is at least 10.  If $d_X(\mu,\partial
Y) \geq 10$ then $X \prec Y$.  If $d_X(\mu',\partial Y) \geq 10$, then reversing the roles of $X$
and $Y$ in each of the 7 equivalent statements we see that $Y \prec X$, as required.
\end{proof}

\begin{corollary}\label{C:partialorder}
Suppose $K \geq 20$.  Then the relation $\prec$ is a strict partial order.
\end{corollary}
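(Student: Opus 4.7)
I verify irreflexivity and transitivity; antisymmetry then follows automatically. Irreflexivity is immediate because $X \prec Y$ requires $X \pitchfork Y$, which fails for $X = Y$.

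For transitivity, suppose $X \prec Y$ and $Y \prec Z$ in $\Omega(K,\mu,\mu')$ with $K \geq 20$. First, I extract projection estimates via Proposition \ref{P:orderdescribed}: from $X \prec Y$, condition (7) gives $d_Y(\mu, \partial X) \leq 4$; from $Y \prec Z$, condition (3) gives $d_Y(\mu, \partial Z) \geq K-4$. The triangle inequality in $\C(Y)$ yields $d_Y(\partial X, \partial Z) \geq K - 8 \geq 12$. This already forces $X \pitchfork Z$: otherwise $X$ and $Z$ would be either disjoint or nested, and in either case their boundary multicurves may be realized disjointly in $S$, forcing their projections into $\C(Y)$ to have diameter at most $2$, contradicting $\geq 12$.

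With $X \pitchfork Z$ established, Proposition \ref{P:orderdescribed} guarantees that $X$ and $Z$ are $\prec$-comparable. To conclude $X \prec Z$ (and rule out $Z \prec X$), I apply the Behrstock inequality (Proposition \ref{P:Behrstock}) to the overlapping pair $(X,Y)$ with the multicurve $\partial Z$ playing the role of the marking: since $d_Y(\partial X, \partial Z) \geq 12 > 4$, the contrapositive yields $d_X(\partial Y, \partial Z) \leq 9$. Combining this with $d_X(\mu, \partial Y) \geq K-4$ from $X \prec Y$, the triangle inequality in $\C(X)$ gives $d_X(\mu, \partial Z) \geq (K-4) - 9 \geq 7 > 4$. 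But $Z \prec X$ would force $d_X(\mu, \partial Z) \leq 4$ via condition (7) of Proposition \ref{P:orderdescribed} applied to the pair $(Z,X)$, ruling that alternative out, so $X \prec Z$.

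The main obstacle I anticipate is the tight bookkeeping of the numerical constants (the estimate $K - 13 \geq 7$ is comfortable but not much more), together with the invocation of Proposition \ref{P:Behrstock} with a multicurve in place of a marking. This substitution is a standard extension in the subsurface-projection literature requiring only that $\pi_X(\partial Z)$ and $\pi_Y(\partial Z)$ be nonempty, which follows from $X \pitchfork Z$ and $Y \pitchfork Z$ respectively; still, it warrants explicit justification to keep the proof self-contained.
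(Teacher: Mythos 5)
Your proof is correct and follows essentially the same route as the paper: triangle inequality in $\C(Y)$ to make $d_Y(\partial X,\partial Z)$ large, the Behrstock inequality applied to $\partial Z$ to transfer this to $\C(X)$, and a final triangle inequality in $\C(X)$. The only cosmetic differences are that you use the contrapositive of Proposition \ref{P:Behrstock} (getting $d_X(\partial Y,\partial Z)\leq 9$ where the paper applies it directly to get $\leq 4$ and hence $d_X(\mu,\partial Z)\geq 12\geq 10$, verifying $X\prec Z$ by condition (2) outright) and that you conclude by excluding $Z\prec X$ via the dichotomy of Proposition \ref{P:orderdescribed}; the paper likewise applies Behrstock with the multicurve $\partial Z$ in place of a marking, so that substitution is consistent with its usage.
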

\begin{proof}
Since we never have $X \pitchfork X$, it follows that $\prec$ is non-reflexive.  Furthermore, the
equivalence of (2) and (7) in Proposition \ref{P:orderdescribed} means that $X \prec Y$ implies $Y
\not \prec X$, so $\prec$ is antisymmetric.  Finally, if $X \prec Y$ and $Y \prec Z$ then we know
$\pi_Y(\partial X)$ and $\pi_Y(\partial Z)$ are nonempty, and appealing to Proposition
\ref{P:orderdescribed} and the triangle inequality we have
\[ 20 \leq K \leq d_Y(\mu,\mu') \leq d_Y(\mu,\partial X) + d_Y(\partial X,\partial Z) + d_Y(\mu',\partial Z) \leq d_Y(\partial X,\partial Z) + 8\]
and so
\[ d_Y(\partial X,\partial Z) \geq 12 > 10.\]
In this case, $\partial X$ and $\partial Z$ intersect nontrivially in $Y$, so in particular, $X
\pitchfork Z$.

Now we apply Proposition \ref{P:Behrstock} to the preceding inequality to obtain
\[ d_X(\partial Y,\partial Z) \leq 4 \]
and hence by the triangle inequality
\[ 16 \leq K - 4 \leq d_X(\partial Y,\mu) \leq d_X(\partial Y,\partial Z) + d_X(\partial Z,\mu) \leq 4 + d_X(\partial Z,\mu).\]
Therefore, $d_X(\partial Z,\mu) \geq 12 > 10$, and $X \prec Z$.
\end{proof}


\section{Normal forms in right-angled Artin groups} \label{S:normal forms}


Here we describe the normal forms in $G = G(\Gamma)$ as defined by Green \cite{green}, and
Hermiller and Meier's procedure for obtaining these normal forms \cite{hermiller-meier}.  We refer
the reader to Charney's survey article \cite{charney} for a discussion.

Suppose $w = x_1^{e_1} \cdots x_k^{e_k}$ is a word in the generators:  $x_i \in \{s_1,\ldots,s_n\}$
and $e_i \in \mathbb Z$. Each $x_i^{e_i}$ is called a {\em syllable of $w$}.  We consider the
following moves which can be applied to $w$ (see also \cite{hsu-wise}):
\begin{enumerate}
\item Remove a syllable $x_i^{e_i}$ if $e_i =0$.
\item If $x_i = x_{i+1}$, then replace consecutive syllables $x_i^{e_i} x_{i+1}^{e_{i+1}}$ by $x_i^{e_i+e_{i+1}}$.
\item If $[x_i,x_{i+1}] = 1$, then replace $x_i^{e_i} x_{i+1}^{e_{i+1}}$ with $x_{i+1}^{e_{i+1}} x_i^{e_i}$.
\end{enumerate}

Let $\Min(\sigma)$ be the set of words representing $\sigma \in G$ with the fewest number of
syllables.  Green's normal form for $\sigma$ is a certain type of element of $\Min(\sigma)$
obtained by stringing together, from left to right, maximal collections of commuting syllables. For
us, we will consider any element of $\Min(\sigma)$ as a normal form, and we will shortly impose
some additional structure on the set of syllables.  First, we state the following from
\cite{hermiller-meier}.

\begin{theorem}[Hermiller-Meier]
Any word representing $\sigma \in G$ can be transformed to any element of $\Min(\sigma)$ by
applying a sequence of the moves above. In particular, in any such sequence, the number of
syllables and the length does not increase.
\end{theorem}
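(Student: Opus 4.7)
The plan is to use Green's normal form $N(\sigma)$ as a canonical intermediate. I take as input two facts from Green's theory of normal forms in $G$: first, $N(\sigma)$ is itself an element of $\Min(\sigma)$; second, any two elements of $\Min(\sigma)$ have the same (unordered) multiset of syllables and can be transformed into one another by a sequence of moves of type (3) alone. The second statement is the rigidity result underlying Green's uniqueness theorem, and it is what makes moves (1) and (2) dispensable once one is already inside $\Min(\sigma)$.

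Given these inputs, I would first reduce an arbitrary word $w$ for $\sigma$ to some element of $\Min(\sigma)$ by a greedy procedure: whenever some move (1) or (2) is directly applicable, apply it; whenever none is directly available but the current word still has more syllables than the minimum, use a sequence of move-(3) swaps to bring a same-generator pair (or a zero-exponent syllable) into a position where (1) or (2) becomes applicable. The induction on syllable count terminates in some $w'' \in \Min(\sigma)$. Throughout this process neither the length nor the syllable count ever increases: move (3) preserves both, move (1) removes a syllable of length zero, and move (2) satisfies $|e_i + e_{i+1}| \leq |e_i| + |e_{i+1}|$ while removing one syllable.

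Second, given any $w' \in \Min(\sigma)$, the rigidity statement cited above says that $w''$ and $w'$ are connected by a sequence of move-(3) swaps, each of which is its own inverse. Concatenating this with the reduction $w \to w''$ yields the desired transformation $w \to w'' \to w'$. The ``in particular'' clause then follows immediately, since each individual move weakly decreases both the syllable count and the length, as noted above.

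The main obstacle is justifying the greedy step: namely, that whenever the current word lies outside $\Min(\sigma)$ and no move (1) or (2) is directly applicable, some sequence of move-(3) swaps exposes a new cancellation. The key combinatorial claim is that any word not in $\Min(\sigma)$ must contain two syllables $x^{a}$ and $x^{b}$ with the same base generator $x$ such that every syllable strictly between them commutes with $x$; one may then slide $x^{b}$ leftward past the intermediate syllables via a chain of move-(3) swaps and combine with $x^{a}$ via move (2). This claim is precisely the content of the Hermiller--Meier reduction algorithm, which one either invokes or proves directly by embedding $G$ into the partially commutative (``trace'') monoid associated with $\Gamma$ and appealing to Levi's lemma there.
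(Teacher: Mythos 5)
The paper does not prove this statement at all: it is quoted verbatim as a result of Hermiller and Meier, with \cite{hermiller-meier} as the only justification, so there is no in-paper argument to compare yours against. Your outline does match the standard proof from the literature: the ``in particular'' clause is indeed immediate from inspecting the three moves (move (3) preserves syllable count and length, move (1) deletes a length-zero syllable, and move (2) merges two syllables with $|e_i+e_{i+1}|\le|e_i|+|e_{i+1}|$), and the substantive content is exactly your two steps --- reduce an arbitrary word into $\Min(\sigma)$, then connect any two elements of $\Min(\sigma)$ by type-(3) swaps alone. One caveat: as written, your argument is not self-contained. The entire difficulty sits in your ``key combinatorial claim'' (a word with no zero exponents and no same-generator pair separated only by commuting syllables already lies in $\Min(\sigma)$) together with the rigidity statement that $\Min(\sigma)$ is connected under type-(3) moves, and you justify both by invoking Green and Hermiller--Meier --- i.e., by invoking the theorem you are proving. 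That is acceptable here, since the paper itself treats the result as a black box, but if you wanted an actual proof you would need to carry out the Levi's-lemma / trace-monoid argument you allude to (or the equivalent ``piling'' argument): one also has to check that the rigidity of $\Min(\sigma)$, which the paper instead \emph{deduces} from the Hermiller--Meier theorem (no move of type (1) or (2) is applicable to a syllable-minimal word), is not being used circularly as an input.
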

It follows that the words in $\Min(\sigma)$ determine geodesics in (the Cayley graph of) $G$ with
respect to $s_1,\ldots,s_n$. Moreover, note that any two elements of $\Min(\sigma)$ differ by
moves of type (3).\\

Let $w = x_1^{e_1} \cdots x_k^{e_k} \in \Min(\sigma)$ and consider the set of syllables $\syl(w) =
\{x_i^{e_i}\}_{i=1}^k$.  We consider this as a set of $k$ distinct elements:  for example, we can
artificially write this as $\{ (x_i^{e_i},i) \}_{i=1}^k$.  If we have two elements $w,w' \in
\Min(\sigma)$ that differ by a single application of move (3) above, then there is an obvious
bijection between $\syl(w)$ and $\syl(w')$.  Moreover, any sequence of these types of moves results
in a sequence of bijections between the syllables of consecutive words in $\Min(\sigma)$.  Observe
that any such bijection between $\syl(w)$ and $\syl(w')$ sends a syllable of $w$ to one of $w'$
representing the same element of $G$.

From this it follows that if any such sequence of moves ever brings a word $w$ back to itself, then
the bijection from $\syl(w)$ to itself is the identity.  The reason is that if $x_i^{e_i}$ and
$x_j^{e_j}$ are syllables of $w$ which represent the same element in $G$ (so $x_i = x_j$, $e_i =
e_j$), then if $x_i^{e_i}$ precedes $x_j^{e_j}$ in $w$, any of the bijections will preserve this
property: a sequence of type (3) moves which would theoretically accomplish a swap of their
positions making $x_j^{e_j}$ precede $x_i^{e_i}$ would require a move where $x_i^{e_i}$ and
$x_j^{e_j}$ are adjacent, at which time a type (2) move could be applied to reduce the number of syllables, and
this is impossible. We use these bijections to identify the syllables of any two words $w,w' \in
\Min(\sigma)$, and simply write $\syl(\sigma)$ for this set of syllables.

We can define a strict partial order on this set of syllables, denoted $\syl(\sigma)$, by declaring
$x_i^{e_i} \prec x_j^{e_j}$ if and only if $x_i^{e_i}$ precedes $x_j^{e_j}$ in {\em every} word $w
\in \Min(\sigma)$.  So for any $w \in \Min(\sigma)$, the order of the syllables is a refinement of
the partial order (and the partial order is the largest partial order having this property for
every $w \in \Min(\sigma)$).

\section{\texorpdfstring{The proof of Theorem \ref{T:main2}.}{The
    proof of Theorem 2.1}} \label{S:proofs}

Throughout this section, we will assume $\mathbb X = \{X_1,\ldots,X_n\}$ realizes $\Gamma$ nicely
in $S$, $\mathbb F =\{f_1,\ldots,f_n\}$ is fully supported on $\mathbb X$, and $\phi_\mathbb
F:G=G(\Gamma) \to \Mod(S)$ is the associated homomorphism.

Given a word $x_1^{e_1}\cdots x_k^{e_k}$ with $x_i \in \{s_1,\ldots,s_n\}$ for all $i$, let $J(i)
\in \{1,\ldots,n\}$ be the unique number for which $x_i = s_{J(i)}$. For any $\sigma \in G$ and $w
= x_1^{e_1}\cdots x_k^{e_k} \in \Min(\sigma)$, set
\[X^w(x_i^{e_i}) = \phi_\mathbb F(x_1^{e_1}\cdots x_{i-1}^{e_{i-1}})(X_{J(i)}) \]
for $i = 2,\ldots,k$ and define $X^w(x_1^{e_1}) = X_{J(1)}$. We think of this as defining a map
\[X^w:\syl(\sigma) \to \Omega(S).\]

\begin{lemma}\label{L:Xsigma}
  Suppose $\Gamma$, $\mathbb X$ and $\mathbb F$ are as above.  If
  $\sigma \in G(\Gamma)$ and $w,w' \in \Min(\sigma)$, then $X^w =
  X^{w'}:\syl(\sigma) \to \Omega(S)$.
\end{lemma}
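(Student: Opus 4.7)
The plan is to reduce the claim to a single type (3) commutation move, then verify invariance of $X^w$ directly from the nice realization hypothesis.

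By the Hermiller--Meier theorem, any two words $w, w' \in \Min(\sigma)$ are related by a sequence of type (3) moves, and the canonical bijection $\syl(w) \leftrightarrow \syl(w')$ is the composition of the obvious bijections across each intermediate move. So by induction on the length of this sequence, it suffices to handle the case where $w$ and $w'$ differ by a single move: write $w = u \cdot x_i^{e_i} x_{i+1}^{e_{i+1}} \cdot v$ and $w' = u \cdot x_{i+1}^{e_{i+1}} x_i^{e_i} \cdot v$ with $[x_i, x_{i+1}] = 1$ in $G$, i.e.\ $\{s_{J(i)}, s_{J(i+1)}\}$ is an edge of $\Gamma$.

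For any syllable lying in $u$, its defining prefix is identical in $w$ and $w'$, so $X^w$ and $X^{w'}$ agree on it. For any syllable lying in $v$, its prefix in $w$ differs from its prefix in $w'$ only by replacing $x_i^{e_i} x_{i+1}^{e_{i+1}}$ with $x_{i+1}^{e_{i+1}} x_i^{e_i}$; since these represent the same element of $G$, the images under $\phi_\mathbb{F}$ coincide, and again $X^w$ and $X^{w'}$ agree.

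The only nontrivial case is the two swapped syllables themselves. Consider $x_i^{e_i}$: its prefix in $w$ is $u$, while its prefix in $w'$ is $u \cdot x_{i+1}^{e_{i+1}}$. So I must check
\[
\phi_\mathbb{F}(u)(X_{J(i)}) \;=\; \phi_\mathbb{F}(u) \phi_\mathbb{F}(x_{i+1}^{e_{i+1}}) (X_{J(i)}),
\]
equivalently that $f_{J(i+1)}^{e_{i+1}}$ fixes $X_{J(i)}$ as an isotopy class of subsurfaces. But since $\{s_{J(i)}, s_{J(i+1)}\}$ is an edge of $\Gamma$, condition (1) of nice realization gives $X_{J(i)} \cap X_{J(i+1)} = \emptyset$; because $f_{J(i+1)}$ is supported on $X_{J(i+1)}$, it is the identity outside $X_{J(i+1)}$, and in particular preserves $X_{J(i)}$ setwise (up to isotopy). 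The same argument, with roles of $i$ and $i+1$ reversed, handles the syllable $x_{i+1}^{e_{i+1}}$. This completes the inductive step.

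There is essentially no hard obstacle here; the proof is a direct unwinding of definitions. The one point worth flagging is the need to keep straight that $\phi_\mathbb{F}(x_{i+1}^{e_{i+1}})$ acts on $\Omega(S)$ by isotopy classes, so the statement ``$f_{J(i+1)}$ fixes $X_{J(i)}$'' is genuinely at the level of isotopy classes --- this is exactly what allows the swap inside $v$ to leave the prefix image unchanged.
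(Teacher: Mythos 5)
Your proof is correct and follows essentially the same route as the paper's: reduce to a single type (3) move via Hermiller--Meier, observe that the prefixes of untouched syllables either coincide or represent the same group element, and handle the two swapped syllables by noting that the edge in $\Gamma$ forces $X_{J(i)} \cap X_{J(i+1)} = \emptyset$, so the support-disjoint mapping class fixes the other subsurface. No gaps.
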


\begin{proof}
Since any two words $w,w' \in \Min(\sigma)$ differ by a sequence of moves of type (3), that is, in
which adjacent commuting syllables are exchanged, it suffices to verify the lemma in the case that
$w$ and $w'$ differ by such a move:
\[ w = x_1^{e_1}\cdots x_i^{e_i}x_{i+1}^{e_{i+1}} \cdots x_n^{e_n} \mbox{ and } w' =  x_1^{e_1}\cdots x_{i+1}^{e_{i+1}}x_i^{e_i} \cdots x_n^{e_n}.\]
For $j \neq i$ or $i+1$, we clearly have $X^w(x_j^{e_j}) = X^{w'}(x_j^{e_j})$, and so we must show
\[ X^w(x_i^{e_i}) = X^{w'}(x_i^{e_i}) \quad \mbox{ and } \quad X^w(x_{i+1}^{e_{i+1}}) = X^{w'}(x_{i+1}^{e_{i+1}}).\]
Interchanging the roles of $w$ and $w'$, it suffices to prove just one of these equations, say
$X^w(x_i^{e_i}) = X^{w'}(x_i^{e_i})$.

We have
\[X^w(x_i^{e_i}) = \phi_\mathbb F(x_1^{e_1} \cdots x_{i-1}^{e_{i-1}})(X_{J(i)})\]
whereas
\[X^{w'}(x_i^{e_i}) = \phi_\mathbb F(x_1^{e_1} \cdots x_{i-1}^{e_{i-1}}x_{i+1}^{e_{i+1}})(X_{J(i)}) = \phi_\mathbb F(x_1^{e_1} \cdots x_{i-1}^{e_{i-1}})\phi_\mathbb F(x_{i+1}^{e_{i+1}})(X_{J(i)}).\]
Since $x_i^{e_i}$ and $x_{i+1}^{e_{i+1}}$ commute, $X_{J(i+1)}$, the support of $\phi_\mathbb
F(x_{i+1}^{e_{i+1}}) = f_{J(i+1)}^{e_{i+1}}$ is disjoint from $X_{J(i)}$.  Therefore,
\[\phi_\mathbb F(x_{i+1}^{e_{i+1}})(X_{J(i)}) = X_{J(i)}\]
and the lemma follows.
\end{proof}

By this lemma we can unambiguously define $X^\sigma = X^w$, independent of the choice of $w \in
\Min(\sigma)$.

The main technical theorem we prove is the following.  From this, together with Theorems \ref{T:mod
distance}, \ref{T:WP distance} and \ref{T:Teich distance}, our Theorem \ref{T:main2} (and hence
also Theorem \ref{T:main}) follows easily.

\begin{theorem} \label{T:main technical}
Suppose $\Gamma$ and $\mathbb X$ are as above and $\mu \in \M(S)$. Then there exists a constant $K
\geq K_0$ with the following property.

Suppose that $\mathbb F = \{f_1,\ldots,f_n\}$ is fully supported on $\mathbb X$ and that
$\tau_{X_j}(f_j) \geq 2K$ for all $1 \leq j \leq n$, and let $\phi_{\mathbb F}:G \to \Mod(S)$ be
the associated homomorphism.  Then, for any $\sigma \in G$ with $x_1^{e_1}\cdots x_k^{e_k} \in
\Min(\sigma)$ we have
\begin{enumerate}
\item $d_{X^\sigma(x_i^{e_i})}(\mu,\phi(\sigma)\mu) \geq  K|e_i|$ for each $i=1,\ldots,k$.
Consequently,
\[ X^\sigma(\syl(\sigma)) \subset \Omega(K,\mu,\phi(\sigma)\mu).\]
\item $X^\sigma(\cdot):\syl(\sigma) \to \Omega(K,\mu,\phi(\sigma)\mu)$ is an order-preserving injection.
\end{enumerate}
\end{theorem}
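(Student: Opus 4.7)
The argument is by induction on the syllable length $k$ of $\sigma$, peeling off the leftmost syllable. Set $g_j = \phi(x_1^{e_1}\cdots x_j^{e_j})$, $\mu_j = g_j \mu$, and $Y_i := X^\sigma(x_i^{e_i}) = g_{i-1}(X_{J(i)})$, so that $\mu_i = h_i \mu_{i-1}$ with $h_i = g_{i-1} f_{J(i)}^{e_i} g_{i-1}^{-1}$ fully supported on $Y_i$ and having translation length $|e_i|\tau_{X_{J(i)}}(f_{J(i)}) \ge 2K|e_i|$ on $\C(Y_i)$. The base case $k=1$ is the pseudo-Anosov displacement estimate on $\C(X_{J(1)})$, with $K$ chosen large enough to absorb the standard additive defect.

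For the inductive step write $\sigma = x_1^{e_1}\sigma''$ with $\sigma''=x_2^{e_2}\cdots x_k^{e_k} \in \Min(\sigma'')$. A direct computation gives $Y^\sigma_1 = X_{J(1)}$ and $Y^\sigma_i = f_{J(1)}^{e_1}(Y^{\sigma''}_{i-1})$ for $i\ge 2$. Before invoking the inductive hypothesis I would establish injectivity of $X^\sigma$ as a separate structural fact: if $Y_i = Y_j$ for $i<j$, then $\phi(x_i^{e_i}\cdots x_{j-1}^{e_{j-1}})$ carries the nice subsurface $X_{J(j)}$ onto $X_{J(i)}$; tracking the action of each intermediate pseudo-Anosov factor $f_{J(r)}^{e_r}$ on subsurfaces in the nice realization forces $J(i)=J(j)$ and each $s_{J(r)}$ with $i<r<j$ to either commute with $s_{J(i)}$ or equal it. Moves (2)--(3) then shorten $w$, contradicting $w\in\Min(\sigma)$. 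In particular $X_{J(1)} \neq Y^{\sigma''}_l$ for every $l$.

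Applying the inductive hypothesis to $\sigma''$, three tasks remain. \emph{(i)} The distance bound at $i=1$ is controlled by writing $\phi(\sigma)\mu = f_{J(1)}^{e_1}\phi(\sigma'')\mu$ and using the triangle inequality
\[ d_{X_{J(1)}}(\mu,\phi(\sigma)\mu) \ge d_{X_{J(1)}}(\phi(\sigma'')\mu,\, f_{J(1)}^{e_1}\phi(\sigma'')\mu) - d_{X_{J(1)}}(\mu,\phi(\sigma'')\mu), \]
whose first term is $\ge 2K|e_1|-C_0$ by pseudo-Anosov dynamics on $\C(X_{J(1)})$. \emph{(ii)} The distance bound at $i\ge 2$ reduces, via the isometric action of $f_{J(1)}^{e_1}$ on $\C$'s, to the inductive estimate $d_{Y^{\sigma''}_{i-1}}(\mu,\phi(\sigma'')\mu)\ge K|e_i|$ minus a single-step perturbation $d_{Y^{\sigma''}_{i-1}}(\mu,f_{J(1)}^{-e_1}\mu)$, which is bounded by a universal constant because $f_{J(1)}^{-e_1}$ is supported on the subsurface $X_{J(1)}$ either disjoint from or transverse to $Y^{\sigma''}_{i-1}$. \emph{(iii)} Order preservation: given $x_i^{e_i}\prec x_j^{e_j}$ in $\syl(\sigma)$, Proposition \ref{P:orderdescribed} reduces $Y^\sigma_i \prec Y^\sigma_j$ to showing $d_{Y^\sigma_i}(\partial Y^\sigma_j,\phi(\sigma)\mu)$ is uniformly small; writing $\partial Y^\sigma_j = g_{j-1}(\partial X_{J(j)})$, one compares this projection to that of $\mu_{j-1}$ and applies the Behrstock inequality (Proposition \ref{P:Behrstock}) iteratively along the tail $\mu_{j-1}\to\mu_k$ of the trajectory, noting that each $Y^\sigma_l$ for $l\ge j$ is either disjoint from or transverse to $Y^\sigma_i$ and the inductive order-preservation localizes the relevant boundary projections.

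The main obstacle is the universal bound on $d_{X_{J(1)}}(\mu,\phi(\sigma'')\mu)$ in task \emph{(i)}, and its analogue in the order-preservation step. A priori many subsurfaces $Y^{\sigma''}_l$ may overlap $X_{J(1)}$, each perturbing the projection to $\C(X_{J(1)})$, and naive telescoping gives a bound that grows in $k$. The resolution, which is the key technical content, combines the inductive order-preservation hypothesis for $\sigma''$ with the Bounded Geodesic Image Theorem \ref{T:BGI} and Proposition \ref{P:Behrstock}: every overlapping $Y^{\sigma''}_l$ has $X_{J(1)}$ sitting below it in the one-sided sense that forces $\pi_{X_{J(1)}}(\partial Y^{\sigma''}_l)$ into a bounded neighborhood of $\pi_{X_{J(1)}}(\mu)$, so the union of these projections has diameter bounded by a universal constant in $\C(X_{J(1)})$, yielding a bound on $d_{X_{J(1)}}(\mu,\phi(\sigma'')\mu)$ independent of $k$.
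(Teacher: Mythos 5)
Your overall architecture (induction on syllable count, the Behrstock inequality, choosing $K$ to absorb bounded errors) matches the paper, but the step you yourself identify as the main obstacle is where the argument breaks, and your proposed resolution does not close it. You want a uniform bound on $d_{X_{J(1)}}(\mu,\phi(\sigma'')\mu)$, and you propose to get it by showing that the boundaries $\partial Y^{\sigma''}_l$ of all overlapping subsurfaces project near $\pi_{X_{J(1)}}(\mu)$ and then invoking Theorem \ref{T:BGI}. But controlling the projections of those boundaries does not control $\pi_{X_{J(1)}}(\phi(\sigma'')\mu)$: the marking $\phi(\sigma'')\mu$ is not assembled from those boundaries, $X_{J(1)}$ need not lie in $\Omega(K,\mu,\phi(\sigma'')\mu)$ a priori (that is precisely what is at issue), and the distance formula only gives an upper bound on $d_{X_{J(1)}}$ in terms of $d_{\M}$, not in terms of projections of finitely many curves. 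The paper's resolution is a single, sharper move: rearrange the word within $\Min(\sigma)$ so that the syllable adjacent to $x_i^{e_i}$ on the relevant side \emph{fails to commute} with it (or is absent), apply the inductive \emph{lower} bound $d_{Y_{i+1}}(\mu,c\mu)\geq K|e_{i+1}|$ in that adjacent overlapping subsurface $Y_{i+1}$, note $d_{Y_{i+1}}(\mu,\partial Y_i)\leq K/2$ by the choice of $K$ (both are among the original $X_j$), and then one application of Proposition \ref{P:Behrstock} forces $\pi_{Y_i}(c\mu)$ to lie within $4$ of $\pi_{Y_i}(\partial Y_{i+1})$, hence within $K/2$ of $\pi_{Y_i}(\mu)$. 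No telescoping and no BGI are needed.

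Two further points. First, your induction for the distance bound at $i\geq 2$ reduces $d_{Y^{\sigma}_i}(\mu,\phi(\sigma)\mu)$ to $K|e_i|$ minus a perturbation $d_{Y^{\sigma''}_{i-1}}(\mu,f_{J(1)}^{-e_1}\mu)$; since the inductive hypothesis only gives $K|e_i|$, the induction does not close unless that perturbation vanishes, and your justification for its boundedness (that a mapping class supported on a subsurface \emph{transverse} to $Y$ moves $\pi_Y$ a bounded amount) is false in general. The paper avoids this by proving, for each syllable separately, the stronger estimate $2K|e_i| - K/2 - K/2$ via the decomposition $w = a\,b\,x_i^{e_i}\,c$ with $b$ commuting with $x_i$ and the extremal syllables of $a$ and $c$ not commuting with $x_i$. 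Second, your ``structural'' injectivity argument (tracking how intermediate pseudo-Anosov factors move subsurfaces to force $J(i)=J(j)$ and commutation) is asserted, not proved, and it does not use the large translation lengths; the paper instead deduces injectivity from the order statement: comparable syllables map to $\prec$-comparable, hence overlapping, hence distinct subsurfaces, while incomparable syllables commute and map to disjoint, hence distinct, subsurfaces.
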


\begin{proof}
Let
\[ K = K_0 + 20 + 2 \cdot \max\{d_{X_j}(\partial X_i,\mu): i \neq j\}.\]
Throughout the proof, we let $\phi = \phi_\mathbb F$.

In what follows, we prove statements 1 and 2 separately.  For both, the proof is by induction on
the number of syllables in $w \in \Min(\sigma)$.

\begin{proof}[Proof of Statement 1]
To make the ideas in the proof more transparent, we introduce simplified notation.  Given $w =
x_1^{e_1}\cdots x_k^{e_k} \in \Min(\sigma)$, define \[g_i = \phi(x_i^{e_i}) = f^{e_i}_{J(i)} \quad
\mbox{ and } \quad Y_i = X_{J(i)}.\]  Then $X^\sigma(x_1^{e_1}) = Y_1$, $X^\sigma(x_2^{e_2}) =
g_1Y_2$, and in general $X^\sigma(x_i^{e_i}) = g_1g_2\cdots g_{i-1}Y_i$.  In this notation,
statement 1 claims that \[d_{g_1\cdots g_{i-1}Y_i}(\mu,g_1\cdots g_k\mu) \geq K|e_i|,\] for $i = 2,
\dots, k$, and also $d_{Y_1}(\mu,g_1\cdots g_k\mu) \geq K|e_1|$.

Suppose $w$ has only one syllable.  Then the claim only states that $d_{Y_1}(\mu,g_1\mu) \geq
K|e_1|$, which holds because, letting $j = J(1)$, we know
\begin{eqnarray*}
d_{Y_1}(\mu,g_1\mu) & = & d_{X_j}(\mu,f^{e_1}_j(\mu))\\
 & = & \diam_{X_j}(\pi_{X_j}(\mu) \cup \pi_{X_j}((f_j^{e_1})\mu))\\
 & = & \diam_{X_j}(\pi_{X_j}(\mu) \cup f_j^{e_1}(\pi_{X_j}(\mu))) \\
 & \geq & \tau_{X_j}(f_j^{e_1}) \\
 & \geq & 2K|e_1|. \\ \end{eqnarray*}

Now suppose we have proved the claim for elements of $G=G(\Gamma)$ whose minimal representatives
have at most $k-1$ syllables.  Let $\sigma \in G$ with $w = x_1^{e_1} \cdots x_k^{e_k} \in
\Min(\sigma)$ having $k$ syllables.  Define $g_i$ and $Y_i$ as above.  Our next step is to separate
the product $g_1 \cdots g_k$ into subproducts, as illustrated below, with the additional
possibility that $a$, $b$, or $c$ might be the empty word:
\[ \overbrace{g_1 \cdots g_{\ell}}^{a}\overbrace{g_{\ell+1}\cdots g_{i-1}}^{b}g_i\overbrace{g_{i+1} \cdots g_k}^{c},\]
The subproducts $a$, $b$, and $c$ are defined as follows.  By Lemma \ref{L:Xsigma}, we may assume
that either $g_i$ and $g_{i+1}$ fail to commute, or by replacing $w$ with another word in
$\Min(\sigma)$, that $i = k$. In the first case, let $c = g_{i+1} \cdots g_k$; in the latter case,
let $c$ be the identity. If there exists some syllable to the left of $g_i$ which does not commute
with $g_i$, let $\ell$ be the largest index such that $g_i$ and $g_\ell$ do not commute, and let $a
= g_1 \cdots g_{\ell}$. Otherwise let $a$ be the identity. Let $b = g_{\ell+1} \cdots g_{i-1}$ if
$\ell +1 < i$, and otherwise let $b$ be the identity; observe that by construction, $b$ commutes
with $g_i$.

Because $g_1 \cdots g_k = abg_i c$, we have \[d_{g_1\cdots g_{i-1}Y_i}(\mu,g_1\cdots g_k\mu) =
d_{abY_i}(\mu,abg_{i}c\mu)=d_{Y_i}(b^{-1}a^{-1}\mu,g_{i}c\mu).\]

By the triangle inequality and the fact that $d_{Y_i}(g_{i}c\mu,c\mu) \geq 2K|e_i|$,
\[d_{Y_i}(b^{-1}a^{-1}\mu,g_{i}c\mu) \geq 2K|e_i| -
d_{Y_i}(b^{-1}a^{-1}\mu,c\mu).\]

To control the last term we again employ the triangle inequality:
\[d_{Y_i}(b^{-1}a^{-1}\mu,c\mu) \leq d_{Y_i}(b^{-1}a^{-1}\mu,\mu) + d_{Y_i}(\mu,c\mu).\]

Because $b$ is the (possibly empty) product of syllables $g_j$ that commute with $g_i$, $b$ acts as
the identity on $Y_i$.  Therefore we have
\begin{eqnarray*}
d_{Y_i}(b^{-1}a^{-1}\mu,\mu) & = & d_{Y_i}(a^{-1}\mu,b\mu) \\
 & = & \diam_{Y_i}(\pi_{Y_i}(a^{-1}\mu) \cup \pi_{Y_i}(b\mu)) \\
 & = & \diam_{Y_i}(\pi_{Y_i}(a^{-1}\mu) \cup \pi_{Y_i}(\mu)) \\
 & = & d_{Y_i}(a^{-1}\mu,\mu).\end{eqnarray*}

So far, we have shown
\[d_{g_1\cdots g_{i-1}Y_i}(\mu,g_1\cdots g_k\mu) \geq 2K|e_i| - d_{Y_i}(a^{-1}\mu,\mu) - d_{Y_i}(\mu,c\mu).\]

To finish, we prove that the last two terms on the right are each less than $K/2$.  Since the sign
of the $e_i$ never comes into play, the proof is very similar for either term, so we focus on
$d_{Y_i}(\mu,c\mu)$.  If $c$ is the identity, then $d_{Y_i}(\mu,c\mu)=\diam_{Y_i}(\mu) \leq 2 \leq
K/2$.  Otherwise $c = g_{i+1}\cdots g_k$.  Because subwords of minimal words are also minimal, $c =
\phi(\sigma_c)$ for some $\sigma_c \in G$ with minimal word $x_{i+1}^{e_{i+1}} \cdots x_k^{e_k}$,
which has strictly less than $k$ syllables.  Let $x'_1 = x_{i+1}^{e_{i+1}}$ be the first syllable.
Applying the induction hypothesis, we have
\begin{eqnarray*}
d_{Y_{i+1}}(\mu,g_{i+1}\cdots g_k\mu) & = & d_{Y_{i+1}}(\mu,c\mu)\\
 & = & d_{X^{\sigma_c}(x'_1)}(\mu,\phi(\sigma_c)\mu)\\
 & \geq & K|e_{i+1}|.\end{eqnarray*}
By our choice of $K$, $d_{Y_{i+1}}(\mu,\partial Y_i) = d_{X_{J(i+1)}}(\mu,\partial X_{J(i)}) \leq
K/2$.  Since $g_i$ and $g_{i+1}$ do not commute, $Y_i \pitchfork Y_{i+1}$, so we may apply the
triangle inequality to obtain
\begin{eqnarray*}
d_{Y_{i+1}}(\partial Y_i,g_{i+1}\cdots g_k\mu) & \geq & K|e_{i+1}| - d_{Y_{i+1}}(\mu,\partial Y_i) \\
& \geq & K|e_{i+1}| - K/2 \geq K/2 \geq 20/2 = 10.\end{eqnarray*} On the other hand, appealing to
Proposition \ref{P:Behrstock}, we know that $d_{Y_i}(\partial Y_{i+1},g_{i+1}\cdots g_k\mu) \leq
4$. By our choice of $K$, $d_{Y_i}(\mu,\partial Y_{i+1}) = d_{X_{J(i)}}(\mu,\partial X_{J(i+1)})
\leq K/2 - 4$, so combining, we have
\begin{eqnarray*}
d_{Y_i}(\mu,c\mu)
 & \leq & d_{Y_i}(\mu,\partial Y_{i+1}) + d_{Y_i}(\partial Y_{i+1},c\mu)\\
 & = &    d_{Y_i}(\mu,\partial Y_{i+1}) + d_{Y_i}(\partial Y_{i+1},g_{i+1}\cdots g_k\mu)\\
 & \leq & K/2 - 4 + 4 = K/2.\end{eqnarray*}
The entire argument can be mirrored for $d_{Y_i}(a^{-1}\mu,\mu)$, starting with the observation
that either $a^{-1}$ is the identity or $a^{-1} = g_{\ell}^{-1}\cdots g_{1}^{-1}$, where $g_{\ell}$
and $g_i$ do not commute.

To summarize, we have shown
\[d_{g_1\cdots g_{i-1}Y_i}(\mu,g_1\cdots g_k\mu) \geq 2K|e_i| - K/2 - K/2 \geq K|e_i|,\]
completing the induction for statement 1.
\end{proof}

\begin{proof}[Proof of Statement 2]
We will now show that $X^\sigma$ is an order-preserving injection. Notice that, by induction, we
need only show $X^\sigma(x_1^{e_1}) \neq X^\sigma(x_k^{e_k})$ to establish injectivity.

The subwords $w_{init}=x_1^{e_1} \cdots x_{k-1}^{e_{k-1}}$ and $w_{term}= x_2^{e_2} \cdots
x_k^{e_k}$ of $w$ are clearly minimal representatives of the elements $\sigma_{init},\sigma_{term}
\in G$ they represent.  Furthermore, the partial order on the syllables of $w_{init}$ and
$w_{term}$ is the restriction of the partial order on the syllables of $w$.

By the inductive hypothesis, the conclusion of the theorem holds for $\sigma_{init}$ and
$\sigma_{term}$. By construction we have
\[X^\sigma(x_i^{e_i}) = \left\{ \begin{array}{ll} X^{\sigma_{init}}(x_i^{e_i}) & \quad \mbox{ if } i \neq k\\
    \phi(x_1^{e_1})(X^{\sigma_{term}}(x_i^{e_i})) & \quad \mbox{ if }
    i \neq 1\\ \end{array} \right. .\] If $i$ is
neither $1$ nor $k$, then the two defining expressions are indeed equal.

Suppose $x_i^{e_i} \prec x_j^{e_j}$ for two syllables of $\sigma$. If $j \neq k$, then both are
syllables of $\sigma_{init}$ and hence by induction $X^{\sigma_{init}}(x_i^{e_i}) \prec
X^{\sigma_{init}}(x_j^{e_j})$.  Thus $X^{\sigma_{init}}(x_i^{e_i}) \pitchfork
X^{\sigma_{init}}(x_j^{e_j})$ and
$d_{X^{\sigma_{init}}(x_i^{e_i})}(\mu,X^{\sigma_{init}}(x_j^{e_j})) \geq 10$.  Since
$X^{\sigma_{init}}(x_i^{e_i}) = X^\sigma(x_i^{e_i})$ and $X^{\sigma_{init}}(x_j^{e_j}) =
X^\sigma(x_j^{e_j})$, we have $X^\sigma(x_i^{e_i}) \prec X^\sigma(x_j^{e_j})$.  A similar argument
works if $i \neq 1$ after applying $\phi(x_1^{e_1})$.

Now suppose $x_1^{e_1} \prec x_k^{e_k}$.  There are two cases.

\medskip \noindent \textbf{Case 1.} There is a syllable $x_i^{e_i}$
such that $x_1^{e_1} \prec x_i^{e_i} \prec x_k^{e_k}$.

\medskip \noindent Arguing as above, by induction we have
$X^\sigma(x_1^{e_1}) \prec X^\sigma(x_i^{e_i})$ and $X^\sigma(x_i^{e_i}) \prec X^\sigma(x_k^{e_k})$
in $\Omega(K,\mu,\phi(\mu))$ and hence by Corollary \ref{C:partialorder} we have that
$X^\sigma(x_1^{e_1}) \prec X^\sigma(x_k^{e_k})$.

\medskip \noindent \textbf{Case 2.}   There is no syllable $x_i^{e_i}$
such that $x_1^{e_1} \prec x_i^{e_i} \prec x_k^{e_k}$.

\medskip \noindent Then
there is a word $w \in \Min(\sigma)$ of the form:
\[ w = w_1 x_1^{e_1}x_k^{e_k} w_2 \] where $[w_1,x_1] =
1$ and $[x_k,w_2] = 1$ (and either or both of  $w_i$ may be the empty word).  Now:
\begin{eqnarray*}
d_{X_{J(1)}}(\mu, \partial X^\sigma(x_k^{e_k})) & = &
d_{X_{J(1)}}(\mu,\phi(w_1)\phi(x_1^{e_1})\partial X_{J(k)})\\
 & = & d_{X_{J(1)}}(\mu,\phi(x_1^{e_1})\partial X_{J(k)}) \\
 & \geq & d_{X_{J(1)}}(\mu,\phi(x_1^{e_1})\mu) - d_{X_{J(1)}}(\phi(x_1^{e_1})\mu,\phi(x_1^{e_1})\partial
 X_{J(k)})\\
 & \geq & 2K|e_1| - (K - 20) \geq 10 \end{eqnarray*}
 where the second equality comes from the fact that $w_1$ commutes with $x_1$, and so $\phi(w_1)$
 is the identity on $X_{J(1)}$.
Thus, by Proposition \ref{P:orderdescribed} (2), we have $X^\sigma(x_1^{e_1}) \prec
X^\sigma(x_k^{e_k})$.

\medskip In particular, if $x_1^{e_1} \prec x_k^{e_k}$ then $X^\sigma$
is injective.

All that remains now is to show that if $x_1^{e_1}$ and $x_k^{e_k}$ are not comparable by $\prec$,
then $X^\sigma(x_1^{e_1}) \neq X^\sigma(x_k^{e_k})$.  If they are not comparable, then $\sigma$ is
represented by a word of the form $w = w_1 x_1^{e_1} x_k^{e_k} w_2$, where as above $[w_1,x_1] = 1$
and $[x_k,w_2] = 1$.  Furthermore, $x_1^{e_1}$ and $x_k^{e_k}$ must commute, and hence it is clear
that $X^\sigma(x_k^{e_k})$ is disjoint from $X^\sigma(x_1^{e_1})$. In particular,
$X^\sigma(x_1^{e_1}) \neq X^\sigma(x_k^{e_k})$.
\end{proof}

This completes the proof of the Theorem.
\end{proof}

\noindent We can now prove the main theorem.
\begin{main2}
 Given a graph $\Gamma$ and a nice realization $\mathbb X =
\{X_1,\ldots,X_n\}$ of $\Gamma$ in $S$, there exists a constant $C > 0$ with the following
property. If $\mathbb F = \{f_1,\ldots,f_n\}$ is fully supported on $\mathbb X$ and
$\tau_{X_i}(f_i) \geq C$ for all $i = 1,\ldots,n$, then the associated homomorphism
\[ \phi_\mathbb F:G(\Gamma) \to \Mod(S) \]
is a quasi-isometric embedding.  Furthermore, the orbit map $G \to \T(S)$ is a quasi-isometric
embedding for both $d_\T$ and $d_\WP$.
\end{main2}

\begin{proof}
We first prove that given $\mu \in \M(S)$, we can choose $C > 0$ so that if $\tau_{X_i}(f_i) \geq C$ for each $i$, then
the orbit map $G(\Gamma) \to \M(S)$ given by $\sigma \mapsto \phi_\BF(\sigma) \mu$ is a quasi-isometric embedding. Since the orbit
map $\Mod(S) \to \M(S)$ is a quasi-isometry, this will suffice to prove the first statement.  Let
$K > 0$ be as in the proof of Theorem \ref{T:main technical} and let $C = 2K$.

First observe that for {\em any} metric space $(\frak X,d)$, any $x \in \frak X$ and any
$\sigma,\tau \in G(\Gamma)$, the triangle inequality implies
\[ d(\sigma \cdot x,\tau \cdot x) \leq A \, d_G(\sigma,\tau) \]
as long as $A \geq \max\{d(s_i \cdot x,x)\}_{i=1}^n$ (here $s_i$ are the generators for $G$).

In particular, given $\mu \in \M(S)$, to prove that the orbit map to $\M(S)$ is a quasi-isometry,
it suffices to find $A \geq 1$ and $B \geq 0$ so that
\[ d_G(1,\sigma) \leq A \, d_\M(\mu,\phi_\mathbb F(\sigma)(\mu)) + B\]
for all $\sigma \in G$ (then we further increase $A$ if necessary so that $A \geq \max\{d_\M(\mu,\phi_\BF(s_i)\mu) \}_{i=1}^n$).

Since $K \geq K_0$, from Theorem \ref{T:mod distance} there exists $A$ and $B$ so that for all
$\sigma \in G$,
\[ \sum_{X \in \Omega(K,\mu,\phi_\mathbb F(\sigma)(\mu))} d_X(\mu,\phi_\mathbb F(\sigma)(\mu)) \leq  A \, d_\M(\mu,\phi_\mathbb F(\sigma)(\mu)) + B.\]
On the other hand, if we let $w = x_1^{e_1} \ldots x_k^{e_k} \in \Min(\sigma)$ then by Theorem
\ref{T:main technical}, since $\tau_{X_i}(f_i) \geq C = 2K$ (and since $K \geq 1$) we have
\begin{eqnarray*} \label{E:the upper bound main}
d_G(1,\sigma)  & = & \displaystyle{ \sum_{i=1}^k |e_i| \quad \leq \quad \sum_{i=1}^k K |e_i|} \\
 & \leq &  \displaystyle{ \sum_{i=1}^k d_{X^\sigma(x_i^{e_i})}(\mu,\phi_\mathbb F(\sigma)(\mu))} \\
 & \leq  & \displaystyle{ \sum_{X \in \Omega(K,\mu,\phi_\mathbb F(\mu))} d_X(\mu,\phi_\mathbb
 F(\sigma)(\mu)).}\\
 & \leq & \displaystyle{A \, d_\M(\mu,\phi_\mathbb F(\sigma)(\mu)) + B}\\
 \end{eqnarray*}
which completes the proof of the first statement.

The proof of the statements regarding Teichm\"uller space are essentially identical.  For this, we
observe that the topological types of the surfaces in $X^\sigma(\syl(\sigma))$ are the same as
those of $\mathbb X$, and hence all are nonannular.  That is, $X^\sigma(\syl(\sigma)) \subset
\Omega_n(K,\mu,\phi_\mathbb F(\mu))$.  So, the above proof can be carried out replacing the use of
Theorem \ref{T:mod distance} with the use of Theorems \ref{T:WP distance} and \ref{T:Teich
distance}.
\end{proof}

\section{Elements of the Constructed Subgroups} \label{S:elements}

We now assume the hypothesis of Theorem \ref{T:main2} (and hence also Theorem \ref{T:main
technical}) on $\Gamma$, $\BX = \{ X_1,\ldots,X_n\}$, $C = 2K > 0$, and $\BF = \{f_1,\ldots,f_n\}$,
and let
\[ \phi_\BF:G(\Gamma) \to \Mod(S) \]
denote the associated homomorphism.  In this section we describe, in terms of the Thurston
classification, the mapping class image of any $\sigma \in G(\Gamma)$.  In particular, we identify
all pseudo-Anosov elements in the image.

Conjugate elements in $G(\Gamma)$ map to conjugate elements in $\Mod(S)$, and conjugation preserves
mapping class type, displacing the support of a mapping class by the homeomorphism corresponding to
the conjugating element.  Therefore to understand the image of $\sigma \in G(\Gamma)$, we may
assume it is an element with the minimal number of syllables among members of its conjugacy class.
We represent $\sigma$ by a word $w \in \Min(\sigma)$.  By changing the indices if necessary, we can
assume that $w$ is a word in the first $r = r(\sigma)$ generators $s_1,\ldots,s_r$, and $r$ is the
least number of generators needed to write $w$.\\

\noindent {\bf Remark.} In what follows, we will always assume that the indices on the
generators are of this type for the particular element $\sigma$ we are interested in.\\

We write $\Fill(X_1,\ldots,X_r)$ to denote the minimal union of subsurfaces, ordered by inclusion, which
contains all of the subsurfaces $X_1,\ldots,X_r$.  Alternatively, $\Fill(X_1,\ldots,X_r)$ is the unique union of subsurfaces containing $X_1 \cup \ldots \cup X_r$ with the property that for every essential curve $\gamma$ contained in it, the projection to at least one of
$X_1,\ldots,X_r$ is nontrivial.  Write $\Fill_\BX(\sigma) = \Fill(X_1,\ldots,X_r)$.

Now, if $\sigma' = \sigma_0 \sigma \sigma_0^{-1}$, then we define $\Fill_\BF(\sigma') =
\phi_\BF(\sigma_0)(\Fill_\BX(\sigma))$.  Note that if $\sigma_0$ is the identity so $\sigma' =
\sigma$, then $\Fill_\BF(\sigma) = \Fill_\BX(\sigma)$ depends only on $\BX$, whereas otherwise, it
depends on $\BF$.

It follows easily from the discussion above that for any $\sigma$, $\phi_\BF(\sigma)$ is supported
on $\Fill_\BF(\sigma)$.  That is, $\phi_\BF(\sigma)$ is represented by a homeomorphism
which is the identity outside $\Fill_\BF(\sigma)$.   In this section, we prove the following.

\begin{theorem}\label{T:find pAs}
Suppose $\Gamma$, $\mathbb X = \{X_1,\ldots,X_n\}$, $C = 2K > 0$, $\mathbb F = \{f_1,\ldots,f_n\}$ satisfy the hypotheses of Theorem \ref{T:main2}, and let
\[ \phi_\BF = \phi:G(\Gamma) \to \Mod(S)\]
denote the associated homomorphism.
Then $\phi(\sigma)$ is pseudo-Anosov on each component of $\Fill_\BF(\sigma)$. In particular,
$\phi(\sigma)$ is pseudo-Anosov if and only if $\Fill_\BF(\sigma) = S$.
\end{theorem}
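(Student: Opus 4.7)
The plan is to show $\tau_Y(\phi(\sigma)) > 0$ for each connected component $Y$ of $\Fill_\BF(\sigma)$; by the Masur--Minsky criterion this gives that $\phi(\sigma)|_Y$ is pseudo-Anosov on $Y$, and the final ``in particular'' claim then follows at once. After conjugation one may assume $\sigma$ has the minimum syllable count in its conjugacy class and is a word in $s_1,\dots,s_r$ with $r = r(\sigma)$ minimal. Subsurfaces in distinct components of $\Fill_\BX(\sigma)$ are disjoint, so the corresponding generators commute in $G(\Gamma)$; accordingly I would rewrite $\sigma = \sigma_Y \cdot \sigma_{\mathrm{rest}}$, where $\sigma_Y$ uses only generators $s_j$ with $X_j \subset Y$ and $\phi(\sigma_{\mathrm{rest}})$ acts trivially on $Y$. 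The case in which $\sigma_Y$ involves a single generator is immediate, so I may assume $\sigma_Y$ uses $s \geq 2$ generators $s_{j_1},\dots,s_{j_s}$, whose supports fill $Y$; connectedness of $Y$ then forbids all these supports from being pairwise disjoint, producing at least one non-commuting pair.

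The heart of the proof is to apply Theorem \ref{T:main technical} to $\sigma_Y^n$ together with the partial order $\prec$. Two syllables incomparable under $\prec$ must correspond to two distinct, commuting generators (same-generator syllables are always comparable by the discussion in Section \ref{S:normal forms}, and two different-generator syllables that are $\prec$-incomparable can be made adjacent by moves of type (3) and thus must commute). Hence every antichain in $\syl(\sigma_Y^n)$ has size at most the clique number of the subgraph of $\Gamma$ on $\{s_{j_1},\dots,s_{j_s}\}$, which is strictly less than $s$ since $Y$ is connected. Since $\sigma_Y$ is cyclically reduced, $|\syl(\sigma_Y^n)|$ grows linearly in $n$, and a standard poset argument yields a $\prec$-chain of length $m \geq cn$ in $\syl(\sigma_Y^n)$ for some $c > 0$. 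Theorem \ref{T:main technical} transports this to a chain $Z_1 \prec Z_2 \prec \cdots \prec Z_m$ in $\Omega(K,\mu,\phi(\sigma_Y^n)\mu)$ consisting of proper essential subsurfaces of $Y$.

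Finally I would convert this subsurface chain into a linear lower bound on $d_{\C(Y)}(\alpha,\phi(\sigma_Y^n)\alpha)$ for $\alpha\in\pi_Y(\mu)$. The Bounded Geodesic Image Theorem (Theorem \ref{T:BGI}), applied inside $Y$, guarantees that each $Z_j$---being a proper subsurface of $Y$ with $d_{Z_j}(\mu,\phi(\sigma_Y^n)\mu) \geq K \geq K_0$---forces every $\C(Y)$-geodesic from $\alpha$ to $\phi(\sigma_Y^n)\alpha$ to contain a vertex $v_j$ disjoint from $Z_j$. The relation $Z_j \prec Z_{j+1}$, combined with Proposition \ref{P:orderdescribed}, yields $d_{Z_{j+1}}(\mu,\partial Z_j) \leq 4$, pinning the $\C(Z_{j+1})$-projection of $v_j$ near that of $\mu$ and forcing $v_j$ to precede $v_{j+1}$ along the geodesic. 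The vertices $v_j$ are therefore distinct and appear in order, giving $d_{\C(Y)}(\alpha,\phi(\sigma_Y^n)\alpha) \geq m - O(1) \geq cn - O(1)$ and hence $\tau_Y(\phi(\sigma)) \geq c > 0$. The main technical obstacle is the last ordering step: turning the combinatorial chain $Z_1\prec\cdots\prec Z_m$ into a temporal sequence of distinct vertices along the $\C(Y)$-geodesic supplied by BGI, using the Behrstock inequality packaged inside Proposition \ref{P:orderdescribed}.
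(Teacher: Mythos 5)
Your overall strategy is the same as the paper's: reduce to a conjugacy-minimal $\sigma$ whose supports fill a connected $Y$, apply Theorem \ref{T:main technical} to powers of $\sigma$ to produce a long $\prec$-chain of subsurfaces with large projections, and then convert that chain into linear growth in $\C(Y)$ via the Bounded Geodesic Image Theorem. Your Dilworth-type argument (bounded antichains because incomparable syllables commute, hence a chain of length $\geq cn$ in $\syl(\sigma^n)$) is a legitimate, slightly different route to the long chain that the paper instead builds explicitly in Lemmas \ref{lm:minimalconjugate}--\ref{lm:compare}; note that you do still need the content of Lemma \ref{lm:minimalconjugate} (that $w^n\in\Min(\sigma^n)$ when $\sigma$ is syllable-minimal in its conjugacy class) for $|\syl(\sigma^n)|$ to grow, and that is not automatic.

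The genuine gap is in the last step, where you claim the vertices $v_j$ supplied by Theorem \ref{T:BGI} are distinct. Your argument pins $\pi_{Z_{j+1}}(v_j)$ near $\pi_{Z_{j+1}}(\mu)$ using $d_{Z_{j+1}}(\mu,\partial Z_j)\leq 4$, but this requires $v_j$ to intersect $Z_{j+1}$; nothing in the relation $Z_j\prec Z_{j+1}$ prevents a single curve from being disjoint from both (two overlapping proper subsurfaces of $Y$ generically have essential curves in the complement of their union). Indeed, using Proposition \ref{P:orderdescribed} and the estimate $d_{Z_{j'}}(\partial Z_j,\partial Z_{j''})\geq 12$ from the proof of Corollary \ref{C:partialorder}, one sees that the set of chain indices ``absorbed'' by a single vertex $u$ is a contiguous interval, and that interval is short only if the corresponding consecutive $Z_j$'s fill $Y$ --- a property your Dilworth chain need not have (it could, for instance, consist entirely of the translates $\phi(\sigma^\ell)X_{J(1)}$). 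This is precisely the role of Lemma \ref{lm:fill} in the paper: the translates $\phi(\sigma^{\ell(2r+1)+r+1})X^\sigma(\syl(\sigma))$ fill $Y$ and sit $\prec$-between consecutive terms of a chain spaced $2r+1$ periods apart, which is what rules out a curve disjoint from two of them. Your proof is repairable --- pass to a subchain spaced by a full period plus the order bookkeeping, invoke the filling of $X^\sigma(\syl(\sigma))$, and you recover a linear bound with a worse constant --- but as written the distinctness claim is unjustified, and the missing ingredient is exactly the filling lemma.
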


Before we begin the proof, we explain a few reductions which will greatly simplify the
exposition.  First, as remarked above, we need only consider the case that $\sigma$ has the minimal
number of syllables among all its conjugates, so we assume this is the case from now on. Therefore
$\Fill_\BF(\sigma) = \Fill_\BX(\sigma)$.

Next, we wish to further reduce to the case that $\Fill_\BX(\sigma)$ is connected.  To describe
this reduction, first let $\Gamma'$ denote the subgraph spanned by $s_1,\ldots,s_r$.  Since the
other generators play no role in this discussion, we assume as we may, that $\Gamma' = \Gamma$. Let
$\Gamma^c$ be the {\em complement} of $\Gamma$.   That is, $\Gamma^c$ is the graph with the same
vertex set as $\Gamma$ and where two vertices span an edge in $\Gamma^c$ if and only if they do not
span an edge in $\Gamma$. Note that generators/vertices $s_i$ and $s_j$ in different components of
$\Gamma^c$ commute. Therefore, we may write $\sigma = \sigma_1 \cdots \sigma_\ell$, where $\ell$ is
the number of components of $\Gamma^c$ and each $\sigma_i$ is in the group generated by vertices in
a single component of $\Gamma^c$.  In particular, $[\sigma_i,\sigma_j] = 1$ for all $i$ and $j$.

Now observe that the vertices of a path in $\Gamma^c$ corresponds to a chain of overlapping
subsurfaces in $S$, and hence the components of $\Fill_\BX(\sigma)$ correspond precisely to the
components of $\Gamma^c$.  In fact, one easily checks that each $\sigma_i$ also has the least
number of syllables in its conjugacy class, and
$\{\Fill_\BX(\sigma_1),\ldots,\Fill_\BX(\sigma_\ell)\}$ is precisely the set of components of
$\Fill_\BX(\sigma)$.  So, restricting attention to one of the subwords $\sigma_i$, we may assume
that $\Fill_\BX(\sigma)$ is connected.

Finally, we note that we can in fact restrict to the case that $\Fill_\BX(\sigma) = S$.  To see
that this is possible, note that $\phi(G(\Gamma))$ is the identity outside $S' =
\Fill_\BX(\sigma)$. So, $\phi$ ``restricts'' to a homomorphism
\[\hat \phi:G(\Gamma) \to \Mod(S')\]
and $\hat \phi(\sigma)$ is pseudo-Anosov if and only if $\phi(\sigma)$ is pseudo-Anosov on
$S' = \Fill_\BX(\sigma)$.\\

We now set out to prove Theorem \ref{T:find pAs} assuming (1) that $\sigma$ has the least number of
syllables in its conjugacy class and (2) $\Fill_\BX(\sigma) = S$.  The proof makes use of the
partial order on $\syl(\sigma)$ and $\syl(\sigma^n)$, and the order-preserving injection
$X^\sigma(\cdot)$ of the previous section. Regarding these, let us set down a series of lemmas.

\begin{lemma}\label{lm:fill}
For $\sigma$ as above, $\Fill(X^{\sigma}(\syl(\sigma)))=S$.
\end{lemma}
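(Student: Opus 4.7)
\textbf{Proof strategy for Lemma \ref{lm:fill}.}

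The plan is to show the (a priori stronger) statement that each of the original subsurfaces $X_j$ for $j = 1,\ldots,r$ is contained in $\Fill(X^\sigma(\syl(\sigma)))$. Since we have reduced to the case $\Fill_\BX(\sigma) = \Fill(X_1,\ldots,X_r) = S$, this will immediately give $\Fill(X^\sigma(\syl(\sigma))) = S$.

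The first step is a bookkeeping setup. Write $w = x_1^{e_1}\cdots x_k^{e_k} \in \Min(\sigma)$ and for each $j \in \{1,\ldots,r\}$ let $i_j$ be the smallest index with $J(i_j) = j$. By the minimality of $r$, every generator $s_j$ actually appears in $w$, so $i_j$ is well-defined. After a harmless relabeling of the generators, we may assume $i_1 < i_2 < \cdots < i_r$. Let $h_m = \phi_\BF(x_1^{e_1}\cdots x_m^{e_m})$, so that $X^\sigma(x_i^{e_i}) = h_{i-1}(X_{J(i)})$. The crucial observation from our ordering is that for every $i < i_j$ we have $J(i) \in \{1,\ldots,j-1\}$, and consequently $h_{i_j-1}$ may be represented by a homeomorphism supported on $X_1 \cup \cdots \cup X_{j-1}$.

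The main step is an induction on $j$. The base case $j = 1$ is immediate since $i_1 = 1$ and $X^\sigma(x_1^{e_1}) = X_1$. For the inductive step, fix $j \geq 2$ and assume $X_\ell \subseteq \Fill(X^\sigma(\syl(\sigma)))$ for $\ell < j$. Let $Z$ be the support of a representative of $h_{i_j-1}$, chosen so that $Z \subseteq X_1 \cup \cdots \cup X_{j-1}$. Because this representative is the identity outside $Z$, as point sets $h_{i_j-1}(X_j)$ and $X_j$ agree on $S \setminus Z$, and hence
\[ X_j \;\subseteq\; h_{i_j-1}(X_j) \;\cup\; Z. \]
The right-hand side is contained in $\Fill(X^\sigma(\syl(\sigma)))$: the first factor equals $X^\sigma(x_{i_j}^{e_{i_j}})$, and $Z$ lies in the union of $X_1,\ldots,X_{j-1}$, which by the inductive hypothesis lies in $\Fill(X^\sigma(\syl(\sigma)))$. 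This closes the induction, giving $X_1 \cup \cdots \cup X_r \subseteq \Fill(X^\sigma(\syl(\sigma)))$, and hence $S = \Fill(X_1,\ldots,X_r) \subseteq \Fill(X^\sigma(\syl(\sigma)))$.

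The only subtlety I expect is the last displayed set inclusion, which requires passing from isotopy classes to carefully chosen topological representatives. Once one fixes a representative of $h_{i_j - 1}$ which is the identity on $S \setminus Z$ (guaranteed to exist because each $f_\ell$ admits a representative supported on $X_\ell$, and a composition of such is supported on the union of the $X_\ell$), the argument becomes a purely set-theoretic manipulation: a point of $X_j$ either lies in $Z$ (hence in the union of the $X_\ell$ with $\ell < j$) or lies in $S \setminus Z$, where $h_{i_j-1}$ is the identity and so $X_j$ and $h_{i_j-1}(X_j)$ coincide.
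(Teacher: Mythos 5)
Your proof is correct, and it rests on the same key observation as the paper's: since every index $i < i_j$ satisfies $J(i) < j$, the prefix $h_{i_j-1}$ is supported on $X_1 \cup \cdots \cup X_{j-1}$. The packaging, however, is genuinely different. The paper argues curve by curve: given an essential curve $\gamma$ (which must meet some $X_{J(i)}$ because $\Fill(X_1,\ldots,X_r)=S$), it takes the \emph{minimal} $i$ with $\gamma \cap X_{J(i)} \neq \emptyset$, notes that the prefix $\phi(x_1^{e_1}\cdots x_{i-1}^{e_{i-1}})$ then fixes $\gamma$, and concludes at once that $\gamma$ meets $X^\sigma(x_i^{e_i})$ --- no relabeling, no induction, and no choice of representatives is needed, because ``every essential curve meets some $X^\sigma(x_i^{e_i})$'' is exactly the defining property of $\Fill(X^\sigma(\syl(\sigma)))=S$. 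Your version dualizes this into an induction on the subsurfaces via the point-set inclusions $X_j \subseteq h_{i_j-1}(X_j) \cup Z$. That inclusion is correct and your bookkeeping ($i_j$ well-defined by minimality of $r$, $J(i)<j$ for $i<i_j$) checks out, but the dual formulation obliges you to (a) fix compatible topological representatives so that the inductive containments in $\Fill(X^\sigma(\syl(\sigma)))$ hold literally rather than only up to isotopy, and (b) invoke, without proof, a monotonicity property of $\Fill$ (if each $X_j$ is contained in the union of essential subsurfaces $\Fill(X^\sigma(\syl(\sigma)))$, then $\Fill(X_1,\ldots,X_r)$ is contained in it too). Both points are standard and repairable --- indeed you flag (a) yourself --- but they are precisely the frictions the paper's curve-based argument avoids. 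What your route buys in exchange is the slightly stronger and reusable statement that each $X_j$ is literally covered by $X^\sigma(x_{i_j}^{e_{i_j}})$ together with the earlier $X_\ell$'s.
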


\begin{proof}
  Fix a minimal word $x_1^{e_1}\cdots x_k^{e_k} \in \Min(\sigma)$.
  Given a curve $\gamma \subset S$, we must show that $\gamma$
  intersects some $X^{\sigma}(x_i^{e_i}) = \phi(x_1^{e_1}\cdots
  x_{i-1}^{e_{i-1}})X_{J(i)}$.  As $\Fill_\BX(\sigma) = \Fill(X_1,\dots,X_r)=S$,
  the curve $\gamma$ intersects some $X_j$.  Let $i$ be the minimal
  index such that $\gamma$ intersects $X_{J(i)}$.

  If $i = 1$, then $\gamma$ intersects $X_{J(1)} =
  X^{\sigma}(x_1^{e_1})$ and the lemma holds.  Else, notice that
  $\phi(x_1^{e_1}\cdots x_{i-1}^{e_{i-1}})(\gamma) = \gamma$.  Hence
  $\gamma$ intersects $X^{\sigma}(x_i^{e_i})$.
\end{proof}

\begin{lemma}\label{lm:minimalconjugate}
  For $\sigma$ as above, $w \in \Min(\sigma)$, and $n \in \mathbb{Z}$ we have $w^n \in
  \Min(\sigma^n)$.
\end{lemma}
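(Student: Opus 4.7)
I would proceed by contradiction: assume $w^n \notin \Min(\sigma^n)$. Since $w \in \Min(\sigma)$ has no zero-power syllables, the Hermiller--Meier theorem forces some sequence of type (3) moves applied to $w^n$ to produce two adjacent syllables with the same generator, ready for a type (2) reduction. Thus there are two syllables $\alpha$ and $\beta$ in $w^n$, both of some generator $s$, that can be brought adjacent by type (3) moves. The standard criterion for right-angled Artin groups says this happens exactly when no syllable $\gamma$ strictly between $\alpha$ and $\beta$ in $w^n$ satisfies $\alpha \prec_{w^n} \gamma \prec_{w^n} \beta$, where $\prec_{w^n}$ is the partial order on $\syl(w^n)$ generated by declaring $\gamma_1 \prec \gamma_2$ whenever $\gamma_1$ precedes $\gamma_2$ in $w^n$ and their generators do not commute.

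Locate $\alpha$ as the $p$-th syllable of the $i$-th copy of $w$ and $\beta$ as the $q$-th syllable of the $j$-th copy, with $i \leq j$; then $y_p = y_q = s$. If $i = j$, any potential obstructing $\gamma$ must also lie in copy $i$, because $\prec_{w^n}$ restricted to a single copy of $w$ is precisely the partial order on $\syl(w)$. Hence $\alpha$ and $\beta$ could be brought adjacent inside $w$ itself and combined, contradicting $w \in \Min(\sigma)$. So one may assume $i < j$ and must exhibit a blocking $\gamma$.

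The guiding principle in the $i < j$ case is that the minimality assumptions on $\sigma$ (cyclic minimality of $w$, i.e., the minimum syllable count in its conjugacy class) together with $\Fill_{\BX}(\sigma) = S$ force enough non-commuting structure in $w$ to supply such a $\gamma$. If $p < q$ in $w$, minimality of $w$ alone gives some $r$ with $p < r < q$ and $y_r$ not commuting with $s$ (otherwise one could already merge $y_p^{d_p}$ and $y_q^{d_q}$ inside $w$), and $\gamma$ is the syllable at position $(r, i)$. If $p > q$, cyclic minimality supplies $r$ in the complementary arc $\{p+1,\dots,k\}\cup\{1,\dots,q-1\}$ with $y_r$ not commuting with $s$, and $\gamma$ is placed at $(r, i)$ or $(r, j)$ accordingly. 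If $p = q$, the hypothesis $\Fill_{\BX}(\sigma) = S$ (under the standing reduction that this fill is connected and uses at least two generators) translates into connectedness of the complement graph $\Gamma^c$ on the generators appearing in $w$, so $s$ cannot commute with every other generator used by $w$; picking such a non-commuting $r$ and placing $\gamma$ at $(r, i')$ for a suitable intermediate copy $i \leq i' \leq j$ produces the obstruction.

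In each subcase, $\alpha \prec_{w^n} \gamma$ and $\gamma \prec_{w^n} \beta$ hold by a one-step chain (direct non-commutation), contradicting the ability to bring $\alpha$ and $\beta$ adjacent. I expect the $p = q$ subcase to be the main obstacle, since only there must one invoke the global geometric hypothesis $\Fill_{\BX}(\sigma) = S$ rather than purely syllabic minimality of $w$; some additional bookkeeping is also needed to choose the copy index $i'$ so that $\gamma$ lands strictly between $\alpha$ and $\beta$ in $w^n$, particularly when $j = i + 1$ and only the copies $i$ and $j$ are available to receive $\gamma$.
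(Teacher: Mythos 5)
Your argument is correct and follows essentially the same route as the paper: assume a first type (2) reduction of $w^n$ is possible after some type (3) moves, locate the two same-generator syllables involved, and obtain a contradiction from minimality of $w$ (same copy, or $p<q$), from cyclic minimality of $\sigma$ ($p>q$), and from connectedness of $\Gamma^c$ coming from $\Fill_\BX(\sigma)=S$ ($p=q$). If anything your case split is the more careful one: the paper reduces to a merge between consecutive copies and contradicts cyclic minimality by pushing one syllable to the front of $w$ and the other to the back, which as written only covers your $p\neq q$ cases, while the $p=q$ case --- where the generator would have to commute with every other generator appearing in $w$ --- is precisely where connectedness of $\Gamma^c$ must be invoked, a point the paper leaves implicit.
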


\begin{proof}
  Clearly $w^n$ represents the element $\sigma^n$; what needs to be
  shown is that this word is minimal.

  Write $w = x_1^{e_1} \cdots x_k^{e_k}$ and assume that $w^n$ is not
  a minimal word representing $\sigma^n$.  Thus we have a sequence of
  the three moves described in Section \ref{S:normal forms} which
  reduces the number of syllables in $w^k$.  We can label the
  syllables of $w^n$ by $x_{1,1}^{e_1}\cdots
  x_{1,k}^{e_k}x_{2,1}^{e_1}\cdots x_{n,k}^{e_k}$ where each block
  $x_{j,1}^{e_1}\cdots x_{j,k}^{e_k} = w$.  As $w$ is minimal, each of
  the $e_i \neq 0$, hence in order to reduce the number of syllables
  of $w^k$ we have some sequence of type (3) moves followed by a type
  (2) move.

  Also as $w$ is minimal, the type (2) cannot occur between syllables
  of the form $x_{j,i}^{e_i}$ and $x_{j,i'}^{e_{i'}}$.  Therefore,
  after applying some type (3) moves, we have a type (2) move between
  syllabes of the form $x_{j,i}^{e_i}$ and $x_{j',i'}^{e_{i'}}$ where
  $j < j'$.  We claim that we can assume that $j' = j+1$.  For
  if not, then $[x_i,x_\ell] = 1$ for all $\ell$ and hence after a
  sequence of type (3) moves we could apply the move
  $x_i^{e_i}x_{i'}^{e_{i'}} \mapsto x_i^{e_i + e_{i'}}$, contradicting
  the fact that $w$ is minimal.

  As the set of indices $\ell$ such that $[x_{j,i},x_\ell] = 1$ and
  the set of indices $\ell$ such that $[x_{j',i'},x_\ell] = 1$ are the
  same, the above assumptions give a sequence of type (3) moves on $w$
  such that brings $w$ to a word of the form $x_{i'}^{e_{i'}}x_1^{e_1}
  \cdots x_k^{e_k}x_i^{e_i}$.  But now conjugating $\sigma$ by
  $x_i^{e_i}$ results in an element with fewer syllables than $\sigma$
  which contradicts our assumption that $|\syl(\sigma)|$ is minimal
  among conjugates of $\sigma$. Thus $w^n \in \Min(\sigma^n)$.
\end{proof}

The above lemma allows us to define syllable shift maps $\sigma_n \co \syl(\sigma^n) \to
\syl(\sigma^{n+1})$ by $\sigma_n(x_{j,i}^{e_i}) = x_{j+1,i}^{e_i}$ using the notation from the
proof of the lemma. Notice, the maps $\sigma_n$ preserve the partial order. For $n > m$ we use the
notation $\sigma_{m,n} = \sigma_{n-1} \cdots \sigma_m$. The map $\sigma_{m,n} \co \syl(\sigma^m)
\to \syl(\sigma^n)$ shifts syllables by $n-m$ blocks and also preserves the partial order.

Under the assumption that $\Fill(X_1,\dots,X_r)=S$, we have that $\Gamma^c$ is connected. In
particular for any syllable $x_i^{e_i}$, there is another syllable $x_j^{e_j}$ such that $[x_i,x_j]
\neq 1$.

\begin{lemma}\label{lm:basecase}
  For $\sigma$ as above, and all $x_i^{e_i} \in \syl(\sigma)$, we have $x_i^{e_i} \prec
  \sigma_{1,2}(x_i^{e_i}) \in \syl(\sigma^2)$.
\end{lemma}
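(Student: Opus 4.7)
My plan is to reduce the lemma to the general fact, established at the end of Section \ref{S:normal forms}, that two syllables of a minimal word representing the same group element must appear in the same relative order in every minimal word. In that sense, the lemma is less a new result than a careful bookkeeping application of that observation, with Lemma \ref{lm:minimalconjugate} supplying the key input.

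First, I would invoke Lemma \ref{lm:minimalconjugate} to conclude that $w^2 = x_{1,1}^{e_1}\cdots x_{1,k}^{e_k}x_{2,1}^{e_1}\cdots x_{2,k}^{e_k}$ lies in $\Min(\sigma^2)$. Using the syllable labeling from the proof of that lemma, the partial order $\prec$ on $\syl(\sigma^2)$ is determined by relative position in every word of $\Min(\sigma^2)$. Under this identification, $x_i^{e_i} \in \syl(\sigma)$ corresponds to $x_{1,i}^{e_i} \in \syl(\sigma^2)$, while $\sigma_{1,2}(x_i^{e_i}) = x_{2,i}^{e_i}$; crucially, both represent the same element $s_{J(i)}^{e_i}$ of $G(\Gamma)$.

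The central step is to rule out the existence of a word $w' \in \Min(\sigma^2)$ in which $x_{2,i}^{e_i}$ precedes $x_{1,i}^{e_i}$. Since any two elements of $\Min(\sigma^2)$ differ by a sequence of type (3) commutation moves, such a $w'$ would be reachable from $w^2$ by such moves, and at some intermediate word the two syllables in question would have to be adjacent in order for their relative order to swap. However, adjacency of two syllables that are powers of the same generator $s_{J(i)}$ with the same exponent would permit a type (2) reduction combining them, producing a word with strictly fewer syllables than $w^2$ and contradicting $w^2 \in \Min(\sigma^2)$.

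Since $x_{1,i}^{e_i}$ precedes $x_{2,i}^{e_i}$ in $w^2$ itself and this relative order cannot be reversed in any other element of $\Min(\sigma^2)$, the definition of $\prec$ on $\syl(\sigma^2)$ yields $x_i^{e_i} = x_{1,i}^{e_i} \prec x_{2,i}^{e_i} = \sigma_{1,2}(x_i^{e_i})$, as required. The only substantive step is reminding oneself that $w^2$ must itself be minimal, which is precisely the content of Lemma \ref{lm:minimalconjugate} and the reason the hypothesis that $\sigma$ has the minimal number of syllables among its conjugates is needed; once this is in hand, everything else is a direct citation of the argument from Section \ref{S:normal forms}.
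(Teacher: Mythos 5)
Your proof is correct, and while it rests on the same key input as the paper's argument, the contradiction you reach is different. Both you and the paper begin by invoking Lemma \ref{lm:minimalconjugate} to get $w^2\in\Min(\sigma^2)$ (this is indeed exactly where the hypothesis that $\sigma$ has the fewest syllables in its conjugacy class enters). From there you observe that $x_{1,i}^{e_i}$ and $x_{2,i}^{e_i}$ represent the \emph{same} element of $G$ (same generator, same exponent), so the well-definedness argument for the syllable bijections in Section \ref{S:normal forms} applies verbatim: reversing their relative order by type (3) moves would force an adjacency at which a type (2) move strictly reduces the syllable count, contradicting $w^2\in\Min(\sigma^2)$. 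The paper instead argues that such a swap would force $x_i$ to commute with every syllable of $\sigma$ sitting between the two copies, and derives a contradiction with the connectedness of $\Gamma^c$ (which comes from the standing assumption $\Fill_\BX(\sigma)=S$). Your route is a bit more economical: it needs no appeal to connectedness of $\Gamma^c$ at this step, and it makes transparent that the lemma is essentially a corollary of the Section \ref{S:normal forms} bookkeeping once minimality of $w^2$ is known. The paper's phrasing has the advantage of being the same template reused in Lemmas \ref{lm:minimalconjugate} and \ref{lm:compare}, which keeps those three proofs parallel; but as a standalone argument yours is complete and, if anything, slightly cleaner.
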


\begin{proof}
  This is similar to the proof of Lemma \ref{lm:minimalconjugate}.  If
  the conclusion is wrong, then $[x_i,x_j] = 1$ for all syllables
  $x_j^{e_j} \in \syl(\sigma)$.  This contradicts the fact that
  $\Gamma^c$ is connected.
\end{proof}

\begin{lemma}\label{lm:compare}
  For $\sigma$ as above, and all $x_i^{e_i},x_j^{e_j} \in \syl(\sigma)$, there exists $n$, $1 \leq n
  \leq r+1$, such that $x_i^{e_i} \prec \sigma_{1,n}(x_j^{e_j})$. In
  particular, for all syllables $x_i^{e_i}, x_j^{e_j} \in \syl(\sigma)$
  we have $x_i^{e_i} \prec \sigma_{1,r+1}(x_j^{e_j})$. \end{lemma}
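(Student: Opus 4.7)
The plan is to induct on the distance $d = d_{\Gamma^c}(x_i, x_j)$ in the complement graph $\Gamma^c$; under the standing reduction this graph is connected on the $r$ vertices $s_1,\ldots,s_r$, so $d \leq r-1$.

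The base cases are $d = 0$ and $d = 1$. For $d = 0$ (i.e., $x_i = x_j$), Lemma \ref{lm:basecase} gives $x_i^{e_i} \prec \sigma_{1,2}(x_i^{e_i})$ directly, so $n = 2$ works. For $d = 1$ (i.e., $[x_i, x_j] \neq 1$), non-commuting syllables can never be transposed by type (3) moves, so their relative order is the same in every element of $\Min(\sigma)$. If $x_i^{e_i}$ precedes $x_j^{e_j}$ in some $w \in \Min(\sigma)$, take $n = 1$; otherwise $w^2 \in \Min(\sigma^2)$ (by Lemma \ref{lm:minimalconjugate}) places $x_i^{e_i}$ from the first block before $\sigma_{1,2}(x_j^{e_j})$ from the second, and since these still do not commute, this relation persists in every minimal word for $\sigma^2$, giving $n = 2$.

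For the inductive step ($d \geq 2$), fix a shortest $\Gamma^c$-path $x_i = x_{v_0}, x_{v_1}, \ldots, x_{v_d} = x_j$. The $d = 1$ case supplies $n_1 \leq 2$ with $x_i^{e_i} \prec \sigma_{1, n_1}(x_{v_1}^{e_{v_1}})$, and the inductive hypothesis supplies $n_2 \leq d$ with $x_{v_1}^{e_{v_1}} \prec \sigma_{1, n_2}(x_j^{e_j})$. Because the shift maps $\sigma_m$ preserve $\prec$, applying $n_1 - 1$ of them to the second inequality yields
\[
\sigma_{1, n_1}(x_{v_1}^{e_{v_1}}) \prec \sigma_{1, n_1 + n_2 - 1}(x_j^{e_j})
\]
inside $\syl(\sigma^{n_1 + n_2 - 1})$, and transitivity of the partial order then gives $x_i^{e_i} \prec \sigma_{1, n_1 + n_2 - 1}(x_j^{e_j})$ with $n_1 + n_2 - 1 \leq d + 1 \leq r$.

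For the ``in particular'' claim, iterating Lemma \ref{lm:basecase} via the order-preserving shifts produces $\sigma_{1, m}(x_j^{e_j}) \prec \sigma_{1, m+1}(x_j^{e_j})$ for every $m \geq 1$, so chaining with the inequality just obtained bumps any admissible $n \leq r$ up to $r + 1$ by transitivity. The main technical point throughout is verifying that the partial orders on the various $\syl(\sigma^N)$ are compatible under the natural prefix inclusion $\syl(\sigma^m) \hookrightarrow \syl(\sigma^N)$, so that inequalities proved at different stages may legitimately be combined in a common syllable set; this rests on the shift-preservation of $\prec$ together with the fact, extracted from Lemma \ref{lm:minimalconjugate}, that minimal words concatenate to minimal words for powers of $\sigma$.
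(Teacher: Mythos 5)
Your argument is essentially the paper's: induction along a shortest path in $\Gamma^c$, with Lemma \ref{lm:basecase} and the non-transposability of non-commuting syllables handling the base cases, and order-preserving shifts plus transitivity handling the inductive step (you peel off the first edge of the path where the paper peels off the last; this difference is immaterial, and your explicit treatment of the ``in particular'' clause is if anything more careful than the paper's). One sub-case is not covered as written: when $d=0$ but $x_i^{e_i}$ and $x_j^{e_j}$ are \emph{distinct} syllables sharing the same generator $x_i = x_j$, citing Lemma \ref{lm:basecase} alone gives $x_i^{e_i}\prec\sigma_{1,2}(x_i^{e_i})$ but not the required $x_i^{e_i}\prec\sigma_{1,n}(x_j^{e_j})$. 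The fix is the same device you already use for $d=1$: in $w^2$ the block-one copy of $x_i^{e_i}$ positionally precedes the block-two copy of $x_j^{e_j}$, and two syllables with the same generator can never be transposed in a minimal word (adjacency would permit a type (2) move), so $n=2$ works here as well; the paper instead disposes of this case by arguing, as in Lemma \ref{lm:minimalconjugate}, that a failure would contradict $|\syl(\sigma)|$ being minimal among conjugates.
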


\begin{proof}
  Fix a minimal sequence of generators $x_i = x_{i_1}, \ldots,
  x_{i_m} = x_j$ such that $ [x_{i_\ell} , x_{i_{\ell + 1}} ] \neq 1$.
  Such a sequence exists as $\Gamma^c$ is connected. Further,
  notice that $m \leq r$. We will prove the lemma by induction on $m$.
  Specifically, we will prove that if there is a path of length $m$
  between $v_{J(i)}$ and $v_{J(j)}$ in $\Gamma^c$, then $x_i^{e_i}
  \prec \sigma_{1,m+1}(x_j^{e_j})$.

  Suppose $m = 1$, hence as generators $x_i = x_j$. The case when
  $x_i^{e_i} = x_j^{e_j}$ as \emph{syllables} in $\syl(\sigma)$ is
  covered by Lemma
  \ref{lm:basecase}.  Else, we must have that $x_i^{e_i} \prec
  x_j^{e_j}$ or $x_j^{e_j} \prec x_i^{e_i}$.  In the first case using
  Lemma \ref{lm:basecase} we have $x_i^{e_i} \prec x_j^{e_j} \prec
  \sigma_{1,2}(x_j^{e_j})$.  In the second case, if $x_i^{e_i} \not \prec
  \sigma_{1,2}(x_j^{e_j})$ we can argue as in the proof of Lemma
  \ref{lm:minimalconjugate} that $|\syl(\sigma)|$ is not minimal among
  conjugates of $\sigma$.

  Now by induction, we have that $x_i^{e_i} \prec
  \sigma_{1,m}(x_{i_{m-1}}^{e_{i_{m-1}}})$. Since $[x_{i_{m-1}},x_j] \neq
  1$, we must have $\sigma_{1,m}(x_{i_{m-1}}^{e_{i_{m-1}}}) \prec
  \sigma_{1,m+1}(x_j^{e_j})$.  Hence $x_i^{e_i} \prec
  \sigma_{1,m}(x_{i_{m-1}}^{e_{i_{m-1}}}) \prec
  \sigma_{1,m+1}(x_j^{e_j})$. This completes the proof of the lemma.
\end{proof}

\begin{proof}[Proof of Theorem \ref{T:find pAs}]  We assume $\sigma$ is as above, so $\Fill_\BX(\sigma) = S$, and prove
$\phi(\sigma)$ is pseudo-Anosov.  For this, it suffices to prove the following.\\

\begin{claim} For every integer $N > 0$ we have
\[ d_S(\partial X_{J(1)},\phi(\sigma^{N(2r+1)})(\partial X_{J(1)})) \geq N.\]
\end{claim}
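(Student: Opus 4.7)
The plan is to prove the claim by a Bounded Geodesic Image argument. I will first produce a $\prec$-chain of $N$ subsurfaces in $\Omega(K,\mu,\phi(\sigma^{N(2r+1)})\mu)$ with large projection distance on the endpoint curves, and then invoke Theorem \ref{T:BGI} to force the geodesic in $\C(S)$ to be long. Throughout, fix a marking $\mu \in \M(S)$ whose base contains $\partial X_{J(1)}$, and let $K$ be the constant of Theorem \ref{T:main technical} associated to $\mu$.

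First I build the chain. Iterating Lemma \ref{lm:compare} inside $\syl(\sigma^{N(2r+1)})$, extract syllables $y_1, \ldots, y_N$ lying in blocks separated by at least $r+1$ positions and satisfying $y_1 \prec y_2 \prec \cdots \prec y_N$. The factor $2r+1$ per step is exactly the room needed for $N$ iterations of the lemma with strict progress in the partial order at each stage. Theorem \ref{T:main technical}(2) transports this to an order-preserving chain $Y_1 \prec Y_2 \prec \cdots \prec Y_N$ of subsurfaces in $\Omega(K,\mu,\phi(\sigma^{N(2r+1)})\mu)$, where $Y_i := X^{\sigma^{N(2r+1)}}(y_i)$, and Theorem \ref{T:main technical}(1) gives $d_{Y_i}(\mu, \phi(\sigma^{N(2r+1)})\mu) \geq K$ for each $i$.

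Next I replace $\mu$ by $\partial X_{J(1)}$ at both endpoints. Since $\partial X_{J(1)}$ lies in the base of $\mu$, whenever $\pi_{Y_i}(\partial X_{J(1)})$ is nonempty it lies within distance $2$ of $\pi_{Y_i}(\mu)$ in $\C(Y_i)$; the same holds after applying $\phi(\sigma^{N(2r+1)})$. Enlarging $K$ to absorb the loss, the triangle inequality yields
\[ d_{Y_i}(\partial X_{J(1)}, \phi(\sigma^{N(2r+1)})\partial X_{J(1)}) \geq K - 4 \geq K_0 \]
for each $i$. Potential coincidences such as $Y_1 = X_{J(1)}$ are avoided by starting the chain in a later block; the degenerate case $r=1$ is immediate, since $\phi(\sigma)$ is then a power of the pseudo-Anosov $f_1$ on $X_1 = S$.

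Theorem \ref{T:BGI} applied to each $Y_i$ now shows that any geodesic $v_0, v_1, \ldots, v_L$ in $\C(S)$ joining the two endpoints contains a vertex $v_{j_i}$ with $\pi_{Y_i}(v_{j_i}) = \emptyset$. The main obstacle is to show that the indices $j_1, \ldots, j_N$ can be chosen pairwise distinct---indeed, strictly increasing in $i$. This is where the full strength of the $\prec$-chain (as opposed to a mere collection of subsurfaces with large projection) becomes essential: via Proposition \ref{P:orderdescribed} and the Behrstock inequality (Proposition \ref{P:Behrstock}), any curve simultaneously disjoint from $Y_i$ and $Y_{i+1}$ would give incompatible projections into these overlapping subsurfaces, contradicting $Y_i \prec Y_{i+1}$. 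Once distinctness is established, $L \geq N$, which is the desired conclusion.
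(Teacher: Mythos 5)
Your overall strategy---an ordered chain of subsurfaces with large projections, fed into Theorem \ref{T:BGI}---is the right one and is the paper's strategy too, but the two steps you flag as needing care are exactly the ones that break, and neither is repaired by what you wrote. First, the endpoint substitution: for an arbitrary syllable $y_i$, the subsurface $Y_i = X^{\sigma^{N(2r+1)}}(y_i)$ has no reason to intersect $X_{J(1)}$ at all (for instance when the corresponding generators commute), so $\pi_{Y_i}(\partial X_{J(1)})$ may genuinely be empty. In that case $d_{Y_i}(\partial X_{J(1)},\cdot)$ is undefined, and Theorem \ref{T:BGI} cannot even be invoked for a geodesic with endpoint $\partial X_{J(1)}$ relative to $Y_i$; your parenthetical ``whenever the projection is nonempty'' silently discards precisely the problematic case, and ``starting the chain in a later block'' does not fix it. The paper avoids this by taking the chain to consist of the translates $\phi(\sigma^{\ell(2r+1)})(X_{J(1)})$ themselves and proving $X_{J(1)} \prec \phi(\sigma^{\ell(2r+1)})(X_{J(1)}) \prec \phi(\sigma^{N(2r+1)})(X_{J(1)})$; since $\prec$ presupposes $\pitchfork$, the needed overlaps, and hence the nonempty boundary projections and the $K-8$ lower bounds via Proposition \ref{P:orderdescribed}, come for free.

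Second, and more seriously, the distinctness step rests on a false claim. Two overlapping subsurfaces $Y_i \pitchfork Y_{i+1}$ need not fill $S$, so a curve disjoint from both certainly can exist (take any curve missing a proper subsurface containing $Y_i \cup Y_{i+1}$); such a curve has empty projection to both, so there are no ``incompatible projections'' to extract from Proposition \ref{P:Behrstock}, and no contradiction with $Y_i \prec Y_{i+1}$. This is where the paper uses Lemma \ref{lm:fill}, which your argument never invokes: between two consecutive markers $\phi(\sigma^{\ell(2r+1)})(\partial X_{J(1)})$ and $\phi(\sigma^{(\ell+1)(2r+1)})(\partial X_{J(1)})$ sits an entire block $\phi(\sigma^{\ell(2r+1)+r+1})X^{\sigma}(\syl(\sigma))$ of subsurfaces that \emph{fill} $S$, so every curve $\gamma$ projects nontrivially to at least one of them, say $Z$; if $\gamma$ were disjoint from both markers their projections to $Z$ would be within $4$ of each other, contradicting the bound $d_Z \geq K-8$ coming from the chain of orderings. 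This is also why the exponent is $2r+1$ rather than $r+1$: each step of the chain must pass through a full intermediate filling block. Without the filling lemma the BGI disjointness points produced for consecutive $Y_i$ can coincide, and the lower bound $L \geq N$ does not follow.
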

Indeed, this claim says that $\phi(\sigma)$ acts with positive translation length on $\C(S)$ as required.
\begin{proof}[Proof of claim]
According to Lemma \ref{lm:compare} we have
\[ x_1^{e_1} \prec \sigma_{1,r+1}(x_j^{e_j}) \prec \sigma_{r+1,2r+1}(\sigma_{1,r+1}(x_1^{e_1})) =
\sigma_{1,2r+1}(x_1^{e_1})
\]
Now, from the definitions, we see that $X^{\sigma^n} \circ \sigma_{1,n} = \phi(\sigma^n) \circ
X^{\sigma}$ for all $n > 1$, and since $X^\sigma$ and $X^{\sigma^n}$ are order preserving by
Theorem \ref{T:main technical} we also have
\[
X_{J(1)} \prec \phi(\sigma^{r+1})(X^{\sigma}(x_j^{e_j})) \prec \phi(\sigma^{2r+1})(X_{J(1)})
\]

This implies that no curve $\gamma \subset S$ is disjoint from both $\partial X_{J(1)}$ and
$\phi(\sigma^{2r+1})\partial X_{J(1)}$.  Indeed, suppose otherwise.  According to Lemma \ref{lm:fill}, the collection of subsurfaces $\phi(\sigma^{r+1})X^{\sigma}(\syl(\sigma))$ fill $S$, so there is some subsurface, say
$\phi(\sigma^{r+1})X^{\sigma}(x_j^{e_j})$ where $\gamma$ has nonempty projection. Hence,
\[
d_{\phi(\sigma^{r+1})X^{\sigma}(x_j^{e_j})}(\partial X_{J(1)}, \phi(\sigma^{2r+1})\partial X_{J(1)}) \leq 4.
\]
However, since $X_{J(1)} \prec \phi(\sigma^{r+1})(X^{\sigma}(x_j^{e_j})) \prec \phi(\sigma^{2r+1})(X_{J(1)})$ it follows from Proposition \ref{P:orderdescribed} and the triangle inequality that
\[
d_{\phi(\sigma^{r+1})X^{\sigma}(x_j^{e_j})}(\partial X_{J(1)},\phi(\sigma^{2r+1})\partial X_{J(1)}) \geq K -
8 > 4
\]
which is a contradiction.

By the same reasoning, no curve $\gamma$ can be disjoint from more than one of the surfaces
\[ \{ \partial X_{J(1)},\phi(\sigma^{2r+1})(\partial X_{J(1)}),\ldots,\phi(\sigma^{N(2r+1)})(\partial X_{J(1)}) \}.
\]
On the other hand, since $X_{J(1)} \prec \phi(\sigma^{\ell(r+1)})(X_{J(1)})) \prec \phi(\sigma^{N(2r+1)})(X_{J(1)})$ for all $0 < \ell < N$, Proposition \ref{P:orderdescribed} and the triangle inequality again imply
\[
d_{\phi(\sigma^{\ell(r+1)})(X_{J(1)})}(\partial
X_{J(1)},\phi(\sigma^{2r+1})\partial X_{J(1)}) \geq K - 8 \geq K_0
\]
where the last inequality comes from the choice of $K$ in the proof of Theorem \ref{T:main technical}.

Now, according to Theorem \ref{T:BGI} any geodesic in $\C(S)$ from $\partial X_{J(1)}$ to $\phi(\sigma^{N(2r+1)})(\partial X_{J(1)})$ must contain a curve disjoint from each $\phi(\sigma^{\ell(2r+1)})(\partial X_{J(1)})$, for each $j = 0,\ldots,N$.  Since these curves must all be distinct by the previous paragraph, we see that this geodesic contains at least $N+1$ vertices, so
\[ d_S(\partial X_{J(1)},\phi(\sigma^{N(2r+1)})(\partial X_{J(1)})) \geq N \]
as required.

This completes the proof of the claim, and also the proof of the Theorem.
\end{proof}
\end{proof}

\section{Surface subgroups} \label{S:surface_subgroups}

In this final section we prove the following corollary of Theorem \ref{T:main2} and briefly discuss surface subgroups of right-angled Artin subgroups of the mapping class group.

\begin{surfacesubgroup}
For any closed surface $S$ of genus at least $3$ and any $h \geq 2$, there exist infinitely many
nonconjugate genus $h$ surface subgroups of $\Mod(S)$, each of which act cocompactly on some
quasi-isometrically embedded hyperbolic plane in the Teichm\"uller space $\T(S)$, with either of
the standard metrics.
\end{surfacesubgroup}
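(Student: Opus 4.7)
The plan is to combine Theorem \ref{T:main2} with a known construction of surface subgroups of right-angled Artin groups, and then separate conjugacy classes using translation length spectra on $\T(S)$. Let $\Gamma$ be the pentagon of Figure \ref{F:pentagon}. By work of Servatius--Droms \cite{servatiusdroms} and Crisp--Wiest \cite{crispwiest1}, $G(\Gamma)$ contains a quasi-isometrically embedded genus $2$ surface subgroup $\Sigma$ that acts cocompactly on a quasi-isometrically embedded copy of $\mathbb H^2$ in the Cayley graph of $G(\Gamma)$. For each $h \geq 3$, I would take the finite-index subgroup of $\Sigma$ corresponding to a degree $(h-1)$ cover of the genus $2$ surface to produce a genus $h$ surface subgroup $\Sigma_h \leq G(\Gamma)$ with the analogous property; set $\Sigma_2 = \Sigma$.

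For any closed surface $S$ of genus $g \geq 3$, Figure \ref{F:genus08cyclic5} together with the branched-cover construction described in Section \ref{S:realizing a graph} gives a nice realization $\mathbb X = \{X_1,\ldots,X_5\}$ of $\Gamma$ in $S$. Choose pseudo-Anosovs $f_i$ fully supported on $X_i$, and let $C > 0$ be the constant from Theorem \ref{T:main2}. For each integer $k$ with $k\,\tau_{X_i}(f_i) \geq C$ for all $i$, set $\mathbb F_k = \{f_1^k,\ldots,f_5^k\}$. By Theorem \ref{T:main2}, $\phi_k := \phi_{\mathbb F_k}\colon G(\Gamma) \to \Mod(S)$ is a quasi-isometric embedding whose orbit map into $(\T(S), d_\T)$ and $(\T(S), d_\WP)$ is a quasi-isometric embedding. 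Restricting to $\Sigma_h$ gives a quasi-isometrically embedded surface subgroup; composing with the quasi-isometrically embedded $\mathbb H^2 \subset G(\Gamma)$ supplied by Servatius--Droms/Crisp--Wiest produces a $\phi_k(\Sigma_h)$-equivariant quasi-isometric embedding $\mathbb H^2 \to \T(S)$ with cocompact action, as required.

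The main obstacle is producing infinitely many pairwise nonconjugate such subgroups, since conjugate subgroups of $\Mod(S)$ have identical translation length spectra on $\T(S)$. Tracking the role of $k$ in the proof of Theorem \ref{T:main technical}---the base case inequality $d_{Y_1}(\mu, g_1\mu) \geq \tau_{X_j}(f_j^{e_1})$ scales as $k|e_1|$ under the substitution $f_j \leadsto f_j^k$, while the $K/2$ error bounds appearing in the inductive step depend only on $\mathbb X$ and $\mu$---yields a refined lower bound $d_{X^\sigma(x_i^{e_i})}(\mu, \phi_k(\sigma)\mu) \geq c\,k\,|e_i|$ with $c > 0$ independent of $k$. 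Feeding this into Theorem \ref{T:WP distance} (or Theorem \ref{T:Teich distance}) shows that the stable translation length on $\T(S)$ of any fixed nontrivial $\sigma \in \Sigma_h$ under $\phi_k$ grows at least linearly in $k$. Since $\Sigma_h$ has a shortest nontrivial element of fixed $G$-length, this forces the minimum $\T(S)$-translation length over nontrivial elements of $\phi_k(\Sigma_h)$ to tend to infinity with $k$; hence only finitely many $\phi_k(\Sigma_h)$ can belong to any single conjugacy class in $\Mod(S)$, and passing to a subsequence completes the argument. The bulk of the work is thus the refined tracking of the $k$-dependence of constants in Theorem \ref{T:main technical}; the remainder is routine composition of quasi-isometric embeddings.
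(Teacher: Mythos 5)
Your proposal is correct and follows essentially the same route as the paper: the pentagon realization in genus $g\geq 3$, the Crisp--Wiest/Servatius--Droms surface subgroup (with genus $h$ obtained by passing to finite covers), Theorem \ref{T:main2} applied to the powered families $\mathbb F_k$, and nonconjugacy via the growth of the minimal translation length on $\T(S)$ as $k\to\infty$. Your tracking of the $k$-dependence of the constants in Theorem \ref{T:main technical} is just a more explicit version of the observation the paper makes in passing.
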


\begin{proof}
Let $\Gamma$ be the cyclic graph of length $5$ and $G(\Gamma)$ the associated right-angled Artin
group.  It was shown in \cite{crispwiest1} that $G(\Gamma)$ contains a quasi-isometrically embedded
genus $2$ surface subgroup, and hence surface subgroups of all genus $h \geq 2$ (it had been
previously shown to contain a genus $5$ surface subgroup in \cite{servatiusdroms}).  As described in \cite{crispwiest1}, this example has a nice description as follows.

Suppose the generators of $G(\Gamma)$ are $a,b,c,d,e$ with $[e,a] = [a,b] = [b,c] = [c,d] = [d,e] = 1$.  Then the homomorphism from the fundamental group of a genus two surface to $G(\Gamma)$ is described by Figure \ref{F:curve_system} as follows.  The figure shows a system of curves on the surface with labels from the set $\{a,b,c,d,e\}$ and transverse orientations.  Choosing a basepoint in the complement of the curve system shown, a loop will cross the curves in the system, and one reads off an element of $G(\Gamma)$ according to the curves one crosses, and in which direction (crossing in the direction opposite the given transverse orientation, one should read an inverse of the generator); see \cite{crispwiest1} for more details.

\begin{figure}[htb]
\begin{center}
\includegraphics[height=1.5truein]{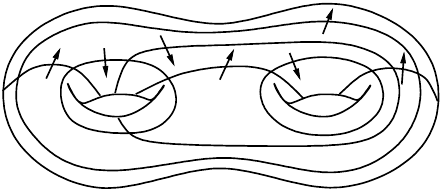}
\setlength{\unitlength}{0.01\linewidth}
\begin{picture}(0,0)
\put(-58,13){$c$}
\put(-33,13){$c$}
\put(-7,13){$e$}
\put(-50,20){$b$}
\put(-20,21.5){$d$}
\put(-42.5,20.5){$a$}
\put(-25,18){$d$}
\end{picture}
\caption{A curve system on a genus $2$ surface which defines an embedding into $G(\Gamma)$, where $\Gamma$ is the cyclic graph of length $5$.} \label{F:curve_system}
\end{center}
\end{figure}

In Section \ref{S:realizing a graph} we observed that $\Gamma$ has a nice realization $\mathbb X =
\{X_1,\ldots,X_5\}$ in any closed surface $S$ of genus $g \geq 3$.  Let $C > 0$ be the constant
from Theorem \ref{T:main2} and $\mathbb F = \{f_1,\ldots,f_5\}$ be any mapping classes fully
supported on $\mathbb X$ with $\tau_{X_i}(f_i) \geq C$.  For every $n > 0$ let $\mathbb F^n =
\{f_1^n,\ldots,f_5^n\}$ so that we also have $\tau_{X_i}(f_i^n) \geq C$.  The family of
right-angled Artin subgroups $\phi_{\mathbb F^n}(G(\Gamma))$ necessarily contains infinitely many
distinct conjugacy classes---observe that the proof of Theorem \ref{T:main2} implies that the
minimal translation length on $\T(S)$ of any element of $\phi_{\mathbb F^n}(G(\Gamma))$ is tending
to infinity as $n \to \infty$.  Similarly, the set of surface subgroups described above, thought of
as subgroups of $\Mod(S)$ via the homomorphisms $\phi_{\mathbb F^n}$, have minimal translation
length on $\T(S)$ tending to infinity as $n \to \infty$.  Consequently, there are infinitely many
pairwise nonconjugate genus $h$ surface subgroups.

That each of these stabilizes a quasi-isometrically embedded hyperbolic plane $\mathbb H \subset
\T(S)$ follows from the fact that the surface group itself is quasi-isometric to $\mathbb H$, and
the orbit map defines a quasi-isometric embedding by Theorem \ref{T:main2}.  The surface group
clearly acts cocompactly on this plane.
\end{proof}

It follows that these surface groups all have positive translation length on Teichm\"uller space.
However, as we have already mentioned, they cannot be purely pseudo-Anosov.  In fact, for surface
subgroups of right-angled Artin groups, this is always the case.

\begin{proposition} \label{P:not purely pa}
Suppose $G(\Gamma) < \Mod(S)$ is a right-angled Artin subgroup and $\pi_1(\Sigma) < G(\Gamma)$ is a
surface subgroup.  Then as a subgroup of $\Mod(S)$, $\pi_1(\Sigma)$ contains a nontrivial reducible
element.
\end{proposition}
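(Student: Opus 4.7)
The strategy is to find a non-trivial $g \in \pi_1(\Sigma)$ lying in a $\mathbb{Z}^2$ subgroup of $G(\Gamma)$. Since every $\mathbb{Z}^2 \leq \Mod(S)$ consists of identity or reducible elements---if some $\alpha$ in a $\mathbb{Z}^2 \leq \Mod(S)$ were pseudo-Anosov, its centralizer in $\Mod(S)$ would be virtually cyclic, precluding the $\mathbb{Z}^2$, and torsion-freeness of $\mathbb Z^2$ rules out periodic elements---such a $g$ will be the desired non-trivial reducible element of $\pi_1(\Sigma)$.

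First I would observe that $\Gamma$ has at least one edge: $\pi_1(\Sigma)$ is one-ended (as $\Sigma$ is closed of genus $\geq 2$) and hence non-free, while $G(\Gamma)$ is free if and only if $\Gamma$ is edgeless.

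To produce $g$, I would induct on $|V(\Gamma)|$. Pick $v \in V(\Gamma)$ with nonempty link, giving either the visual amalgam $G(\Gamma) = G(\mathrm{star}(v)) *_{G(\mathrm{link}(v))} G(\Gamma \setminus v)$ (when $\mathrm{star}(v) \neq \Gamma$) or the direct product $G(\Gamma) = \langle v\rangle \times G(\Gamma \setminus v)$. In the amalgam case, $\pi_1(\Sigma)$ acts on the Bass--Serre tree; being one-ended, it either fixes a vertex or admits a non-trivial element stabilizing an edge. If such an edge-stabilizing $g$ exists, then $g \in h G(\mathrm{link}(v)) h^{-1}$ for some $h$, $g$ commutes with $hvh^{-1}$, and $g$ is not a power of $hvh^{-1}$ (since no non-trivial power of $v$ lies in $G(\mathrm{link}(v))$); hence $g \in \langle g, hvh^{-1}\rangle \cong \mathbb{Z}^2$. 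Otherwise $\pi_1(\Sigma)$ is contained in a conjugate of $G(\Gamma \setminus v)$ (where the inductive hypothesis applies to a smaller graph) or of $\langle v\rangle \times G(\mathrm{link}(v))$; in this latter case, either $\pi_1(\Sigma) \cap \langle v\rangle$ is non-trivial (yielding some $v^k \in \pi_1(\Sigma)$ inside the $\mathbb{Z}^2$ generated by $v$ and any $\ell \in \mathrm{link}(v)$) or the projection $\pi_1(\Sigma) \to G(\mathrm{link}(v))$ is injective and the inductive hypothesis applies. The pure direct-product case is handled by the same subcase analysis.

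The base cases $|V(\Gamma)| \leq 1$ have $G(\Gamma) \in \{1, \mathbb Z\}$, which contain no closed surface subgroups of genus $\geq 2$; reducing inductively to a $\Gamma$ with no edges would likewise force $G(\Gamma)$ to be free and exclude such subgroups. Thus the induction must terminate by producing the desired $g$. The main obstacle will be the careful management of the subcases---in particular, ensuring that a faithful projection into a smaller right-angled Artin subgroup of $\Mod(S)$ still fits the inductive framework, and verifying that at each step either $|V(\Gamma)|$ strictly decreases or the pair $(g, hvh^{-1})$ is produced directly.
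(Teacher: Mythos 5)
Your argument is correct, but it is genuinely different from the one in the paper. The paper invokes Crisp and Wiest's structure theorem: every homomorphism $\pi_1(\Sigma) \to G(\Gamma)$ is encoded by a labeled, transversely oriented curve system on $\Sigma$, and a loop $\gamma$ running parallel to a curve labeled $s_1$ maps to a word in generators commuting with (and distinct from) $s_1$, so $\langle \phi(\gamma), s_1\rangle \cong \mathbb Z^2$; the reducibility of $\phi(\gamma)$ then follows exactly as in your final step, from the fact that a pseudo-Anosov has virtually cyclic centralizer. You instead reach the same intermediate goal --- a nontrivial element of $\pi_1(\Sigma)$ inside a $\mathbb Z^2$ of $G(\Gamma)$ --- by Bass--Serre theory on the visual splitting $G(\Gamma) = G(\Gamma\setminus v) *_{G(\mathrm{lk}(v))} G(\mathrm{st}(v))$ together with one-endedness and induction on $|V(\Gamma)|$. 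This buys you more than the paper's proof: it is independent of the Crisp--Wiest classification, and it applies verbatim to any finitely generated one-ended subgroup (the paper only remarks that the statement extends to finitely presented one-ended groups), whereas the paper's version has the advantage of setting up the curve-system picture that is reused in the subsequent non-commensurability argument for $\Mod(S_2)$, where one needs \emph{two} such elements representing intersecting curves. One bookkeeping point you flagged yourself deserves to be settled: in the subcase where $\pi_1(\Sigma)$ injects into $G(\mathrm{lk}(v))$ under the projection $p$ from $\langle v\rangle \times G(\mathrm{lk}(v))$, the inductive hypothesis hands you an element of $p(\pi_1(\Sigma))$, not of $\pi_1(\Sigma)$. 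But no induction is needed there at all: writing a nontrivial $g \in \pi_1(\Sigma)$ as $g = v^n h$ with $h \in G(\mathrm{lk}(v))$, either $h \neq 1$ and $g \in \langle v\rangle \times \langle h\rangle \cong \mathbb Z^2$, or $g = v^n$ lies in $\langle v\rangle \times \langle \ell\rangle$ for any $\ell \in \mathrm{lk}(v)$ (nonempty by choice of $v$). With that observation the induction only ever recurses through the elliptic-in-$G(\Gamma\setminus v)$ branch, where conjugation and restriction are harmless, and the proof closes up cleanly.
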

\begin{proof}
As was shown in \cite{crispwiest1}, every homomorphism from a surface group $\pi_1(\Sigma)$ into a
right-angled Artin group $G(\Gamma)$ arises as in the proof of the previous corollary.  That is,
there is a curve system on $\Sigma$, each curve is endowed with a transverse orientation, the
components are labeled by generators of $G(\Gamma)$, and the homomorphism is obtained by taking a
loop and reading off the generators as one crosses the curves in the system.

Furthermore, one may assume that each of the curves in the system is essential, and if the
homomorphism $H \to G(\Gamma)$ is injective then these curves cut $\Sigma$ into disks.  Now,
consider a loop $\gamma$ which runs parallel to, without crossing, one of the curves in the system.
Call this curve $\eta_1$ and suppose the associated generator of $G(\Gamma)$ is $s_1$.  The loop
$\gamma$ crosses other curves $\eta_{i_1},\ldots,\eta_{i_k}$ and so determines some word
$s_{i_1}^{\pm 1} \cdots s_{i_k}^{\pm 1}$, which is the image of $\gamma$ in $G(\Gamma)$.   Choosing
$\gamma$ to run very close to $\eta_1$, we can assume that the curves $\eta_{i_1},\ldots,\eta_{i_k}$
which $\gamma$ crosses are also nontrivially intersected by $\eta_1$.  As noted in
\cite{crispwiest1}, each of the associated generators $s_{i_1},\ldots,s_{i_k}$ must commute
with $s_1$, and be different from it.

Now we are essentially done.  The image of $\gamma$ is an element which commutes with $s_1$, and in
fact, the image of $\gamma$ and $s_1$ in $G(\Gamma)$ generate a subgroup isomorphic to $\mathbb
Z^2$.  If $G(\Gamma)< \Mod(S)$, then the image of $\gamma$ in $\Mod(S)$ has centralizer which
contains $\mathbb Z^2$. As is well known, the image of $\gamma$ cannot be pseudo-Anosov; see
\cite{ivanov}.
\end{proof}

\noindent {\bf Remark.} In fact, the assumption that $\pi_1(\Sigma)$ is a surface group can be
relaxed to the assumption that $\pi_1(\Sigma)$ is a finitely presented $1$--ended group.\\

In \cite{koberda}, Koberda observes that $\Mod(S_g)$ is not commensurable with a right-angled Artin
group if $g \geq 3$ (in fact, he proves the stronger statement that $\Mod(S_g)$ cannot virtually
embed in a right-angled Artin group).  This is also true for genus $2$ as the following shows.

\begin{proposition}
The group $\Mod(S_2)$ is not commensurable with a right-angled Artin group.
\end{proposition}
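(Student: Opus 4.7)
The plan is to leverage the hyperelliptic involution $\iota \in \Mod(S_2)$, which generates the center $Z(\Mod(S_2)) \cong \mathbb{Z}/2$, together with the previously noted fact of Koberda that $\Mod(S_g)$ is not commensurable with any right-angled Artin group for $g \geq 3$. Suppose for contradiction that $\Mod(S_2)$ is commensurable with some right-angled Artin group $G(\Gamma)$. Then there exist finite-index subgroups $H \leq \Mod(S_2)$ and $H' \leq G(\Gamma)$ together with an isomorphism $H \cong H'$. Right-angled Artin groups are torsion-free, so $H' \cong H$ is torsion-free, and in particular $\iota \notin H$.

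Next I would pass to the quotient $q\co \Mod(S_2) \to \Mod(S_2)/\langle \iota \rangle$. Because $\iota$ is central and $\iota \notin H$, the map $q$ restricts to an injection on $H$, so $q(H)$ is a finite-index subgroup of $\Mod(S_2)/\langle \iota \rangle$ isomorphic to $H \cong H'$. By the classical Birman--Hilden theorem, $\Mod(S_2)/\langle \iota \rangle$ is naturally isomorphic to $\Mod(S_{0,6})$, the mapping class group of a sphere with six marked points (with marked points permitted to be permuted). Hence $\Mod(S_{0,6})$ would itself be commensurable with the right-angled Artin group $G(\Gamma)$.

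The hard part will be to produce a contradiction from this last statement. Koberda's theorem is stated for closed surfaces of genus at least $3$, but its essential ingredients---existence of sufficiently many non-commuting Dehn twists supported on pairwise overlapping subsurfaces, together with Ivanov-style rigidity of the abstract commensurator---are all available for $\Mod(S_{0,6})$ since $\xi(S_{0,6}) = 3$. I would therefore adapt Koberda's proof to this punctured-sphere setting to rule out the supposed commensurability with $G(\Gamma)$. An alternative route is to pull back via a suitable regular branched cover $S_g \to S_{0,6}$ with $g \geq 3$ (for example a cyclic cover of order $n \geq 3$ branched over all six points), lift $q(H)$ to a finite-index subgroup of the hyperelliptic-type mapping class group inside $\Mod(S_g)$, and then invoke Koberda's theorem for $\Mod(S_g)$ as originally stated.
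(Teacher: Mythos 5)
Your opening reduction is sound: right-angled Artin groups are torsion-free, so a finite-index subgroup $H \le \Mod(S_2)$ isomorphic to a finite-index subgroup of $G(\Gamma)$ cannot contain the hyperelliptic involution $\iota$, and since $\iota$ is central, $H$ injects with finite index into $\Mod(S_2)/\langle \iota \rangle \cong \Mod(S_{0,6})$. But the argument stops exactly where the work begins: you still must show that $\Mod(S_{0,6})$ is not commensurable with a right-angled Artin group, and neither of your proposed completions does this. Route (a) is only a promise to ``adapt Koberda's proof''; no adaptation is carried out, and note that the paper itself does not obtain the punctured-sphere case from Koberda --- its closing remark derives it from the same surface-subgroup argument it uses for $S_2$. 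Route (b) genuinely fails: lifting through a branched cover $S_g \to S_{0,6}$ with $g \ge 3$ lands you in the symmetric (hyperelliptic-type) mapping class group, which has \emph{infinite} index in $\Mod(S_g)$. Koberda's theorem rules out virtual embeddings of $\Mod(S_g)$ itself into a right-angled Artin group; it says nothing about such an infinite-index subgroup embedding, so no contradiction is reached.

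For comparison, the paper never leaves genus $2$. It takes a Leininger--Reid surface subgroup $\pi_1(\Sigma) < \Mod(S_2)$ in which every element is pseudo-Anosov except for powers and conjugates of a single class represented by a simple closed curve $\alpha \subset \Sigma$; hence in the finite-index subgroup $\pi_1(\widetilde\Sigma) = \pi_1(\Sigma) \cap \Lambda$ the reducible elements all correspond to the disjoint multicurve given by the preimage of $\alpha$ in $\widetilde\Sigma$. On the other hand, by the Crisp--Wiest labeled-curve-system description of surface subgroups of right-angled Artin groups (as in the proof of Proposition \ref{P:not purely pa}), such a surface subgroup must contain two elements represented by \emph{intersecting} curves on $\widetilde\Sigma$ whose centralizers in $G(\Gamma)$ contain $\mathbb{Z}^2$, forcing both to be reducible in $\Mod(S_2)$ --- a contradiction. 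If you wish to keep your reduction to $\Mod(S_{0,6})$, you would still need an argument of this kind there (the paper's remark observes that the Leininger--Reid examples descend to the quotient by $\iota$), so the reduction does not actually save any work.
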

\begin{proof}
Suppose $\Mod(S_2)$ is commensurable with $G(\Gamma)$, with $\Lambda$ isomorphic to a finite index
subgroup of both.  Let $\pi_1(\Sigma)  < \Mod(S_2)$ be a surface subgroup as constructed in
\cite{leiningerreid}.  In this surface group, there is exactly one element of $\pi_1(\Sigma)$, up
to conjugacy and powers, which is not pseudo-Anosov.  Moreover, this one element represents a
simple closed curve $\alpha$ on $\Sigma$.

Now, $\pi_1(\Sigma) \cap \Lambda$ is a finite index subgroup of $\pi_1(\Sigma)$ and so corresponds
to a covering space $p:\widetilde \Sigma \to \Sigma$, and we write $\pi_1(\widetilde \Sigma) <
\pi_1(\Sigma)$ for the image under $p_*$.  Note that the reducible elements of $\pi_1(\widetilde
\Sigma)$ in $\Mod(S_2)$ represent a finite set of pairwise disjoint simple closed curves on
$\widetilde \Sigma$, namely $p^{-1}(\alpha)$.

On the other hand, a closer inspection of the proof of  the previous proposition shows that, viewing
$\Lambda < G(\Gamma)$, there are actually {\em two} elements $\gamma_1,\gamma_2 \in \pi_1(\widetilde \Sigma)$
which represent curves on $\widetilde \Sigma$ which nontrivially intersect, whose centralizers in $G(\Gamma)$
contain $\mathbb Z^2$.  These must represent reducible elements in $\Mod(S)$, and this is a
contradiction.
\end{proof}

\noindent {\bf Remark.}  This same proof also works to show that the mapping class group of an $n$--punctured sphere, with $n \geq 6$ is not commensurable with any right-angled Artin group.  The point is that the examples from \cite{leiningerreid} can be chosen to descend to the quotient by the hyper-elliptic involution, and then one of the punctures can be erased (with the exception of the genus $2$ case).

\bibliography{raagqi}{}
\bibliographystyle{plain}

\noindent
Department of Mathematics\\
Allegheny College\\\
Meadville, PA 16335\\
E-mail: \texttt{mclay@allegheny.edu}\\

\noindent
Department of Mathematics \\
University of Illinois at Urbana-Champaign \\
Urbana, IL 61801\\
E-mail: \texttt{clein@math.uiuc.edu}\\

\noindent
Department of Mathematics \\
University of Michigan\\
Ann Arbor, MI 48109\\
E-mail: \texttt{mangahas@umich.edu}

\end{document}